\newtheorem{theorem}{Theorem}
\newtheorem{lemma}{Lemma}
\newtheorem{example}{Example}
\numberwithin{equation}{section}
\begin{document}

\title{Numerical scheme for the Fokker-Planck equations describing anomalous diffusions with two internal states}
\author{Daxin Nie,~~Jing Sun,~~Weihua Deng\footnote{Corresponding Author. E-mail: dengwh@lzu.edu.cn}\\[10pt]
        {School of Mathematics and Statistics, Lanzhou University, Lanzhou 730000, P.R. China}
       % { Gansu Key Laboratory of Applied Mathematics and Complex Systems, }
       }

\date{}
\maketitle
%-----------------------------------------------------------------------------------------
\begin{abstract}
Recently, the fractional Fokker-Planck equations (FFPEs) with multiple internal states are built for the particles undergoing anomalous diffusion with different waiting time distributions for different internal states, which describe the distribution of positions of the particles [Xu and Deng, Math. Model. Nat. Phenom., $\mathbf{13}$, 10 (2018)]. In this paper, we first develop the Sobolev regularity of the FFPEs with two internal states, including the homogeneous problem with smooth and nonsmooth initial values and the inhomogeneous problem with vanishing initial value, and then we design the numerical scheme for the system of fractional partial differential equations based on the finite element method for the space derivatives and convolution quadrature for the time fractional derivatives. The optimal error estimates of the scheme under the above three different conditions are provided for both space semidiscrete and fully discrete schemes. Finally, one- and two-dimensional numerical experiments are performed to confirm our theoretical analysis and the predicted convergence order.

\vskip 4pt \textbf{Keywords}: Fractional Fokker-Planck equations, multiple internal states, finite element method, convolution quadrature.

\vskip 4pt \textbf{AMS subject classifications}: 35Q84, 35R11, 65N30, 65N12
\end{abstract}
%=========================================================================================
\begin{abstract}
Recently, the fractional Fokker-Planck equations (FFPEs) with multiple internal states are built for the particles undergoing anomalous diffusion with different waiting time distributions for different internal states, which describe the distribution of positions of the particles [Xu and Deng, Math. Model. Nat. Phenom., $\mathbf{13}$, 10 (2018)]. In this paper, we first develop the Sobolev regularity of the FFPEs with two internal states, including the homogeneous problem with smooth and nonsmooth initial values and the inhomogeneous problem with vanishing initial value, and then we design the numerical scheme for the system of fractional partial differential equations based on the finite element method for the space derivatives and convolution quadrature for the time fractional derivatives. The optimal error estimates of the scheme under the above three different conditions are provided for both space semidiscrete and fully discrete schemes. Finally, one- and two-dimensional numerical experiments are performed to confirm our theoretical analysis and the predicted convergence order.
\end{abstract}

\maketitle
\section{Introduction}
The Forkker-Planck equation (FPE) is one of the most important equations of statistical physics, which describe the time evolution of the probability density function (PDF) of positions of  particles. With the rapid development of technologies, the colorful anomalous diffusion phenomena are observed. At the early stage, the fractional Forkker-Planck equations (FFPEs) were derived to model the anomalous physical processes with power-law waiting time and/or jump length distribution(s) \cite{Barkai2001,Barkai2000}. However, the solutions of the most of the FFPEs can't be obtained explicitly and this fact motivates many authors to develop the effective numerical methods for FFPEs \cite{Deng2007,Deng2009,Heinsalu2006,Liu2004,Meerschaert2006}.

%With the deep research of the anomalous diffusion phenomena, different assumptions on the PDF give rise to the various  fractional Forkker-Planck equations (FFPEs) to model many physical processes  \cite{Barkai2001,Barkai2000}. However, the solution of the most of the FFPEs can't be obtained explicitly and this fact motivates many authors to develop the effective numerical methods for FFPEs \cite{Deng2007,Deng2009,Heinsalu2006,Liu2004,Meerschaert2006}.

With the deep insight on the mechanism of anomalous diffusion, in some cases, the concept of internal states has to be introduced for more accurately modeling the real natural phenomena. Specifying each internal state with particular waiting time and jump length distributions and introducing a Markov chain with its transition matrix deciding the transition of the internal states, Ref. \cite{Xu2018}, recently, builds the multiple-internal-states FFPEs (see \cite{Xu2018-2} for the multiple-internal-states L\'evy walk). Efficiently solving the model naturally becomes an urgent topic.
%Recently,  \cite{Xu2018} derives the FFPEs based on the CTRW with independent identically distributed waiting time and jump length distributions, which develops the stochastic model by introducing the fractional compound Poisson processes \cite{Meerschaert2012} with finite internal states and the internal states determine the distributions of waiting times and the transition of the internal states is decided by a Markov chain with its transition matrix.
%So far, it seems that there are no numerical schemes for the system of the FPDEs.
In this paper, we provide a numerical scheme and do the numerical analyses for the FFPEs with two internal states \cite{Xu2018}, i.e.,
\begin{equation}\label{equmatrixequ}
\left \{
\begin{split}
&\mathbf{M}^T\frac{\partial }{\partial t} \mathbf{G}=(\mathbf{M}^T-\mathbf{I}){\rm diag}(~_0D^{1-\alpha_1}_t, ~_0D^{1-\alpha_2}_t)\mathbf{G}\\
&\quad\quad\quad\quad\quad\quad +\mathbf{M}^T{\rm diag}(~_0D^{1-\alpha_1}_t, ~_0D^{1-\alpha_2}_t)\Delta\mathbf{G}+\mathbf{M}^T\mathbf{F} \quad {\rm in}\ \Omega,\ t\in[0,T],\\
&\mathbf{G}(\cdot,0)=\mathbf{G}_0 \quad\quad\quad\quad\quad\quad\quad\quad\quad\quad\quad\quad\quad\quad\quad\quad\quad\quad\quad\quad {\rm in}\ \Omega,\\
&\mathbf{G}=0 \quad\quad\quad\quad\quad\quad\quad\quad\quad\quad\quad\quad\quad\quad\quad\quad\quad\quad\quad\quad\quad\quad\ \ \, {\rm on}\ \partial\Omega,\ t\in[0,T],
\end{split}
\right .
\end{equation}
where $\Omega$ denotes a bounded convex polygonal domain in $R^d$ $(d=1,2,3)$; $\mathbf{M}$ is the transition matrix of a Markov chain, being a $2\times 2$ invertible matrix here; $\mathbf{G}=[G_1,G_2]^T$ denotes the solution of the system \eqref{equmatrixequ} and $\mathbf{F}=[f_1,f_2]^T$ is the source term; $\mathbf{G}_0=[G_{1,0},G_{2,0}]^T$ is the initial value; $\mathbf{I}$ is an identity matrix; `diag' denotes a diagonal matrix formed from its vector argument,  and $~_0D^{1-\alpha_{i}}_t$, $i=1,2$ are the Riemann-Liouville fractional derivatives defined by \cite{Podlubny1999}
\begin{equation}
_{0}D^{1-\alpha_{i}}_tG=\frac{1}{\Gamma(\alpha_{i})}\frac{\partial}{\partial t}\int^t_{0}(t-\xi)^{\alpha_{i}-1}G(\xi)d\xi, ~\alpha_{i}\in(0,1), ~i=1,2.
\end{equation}

It can be noted that the system \eqref{equmatrixequ} is constituted of fractional derivatives in time and Laplacian operator in space. Numerical methods for the time fractional derivatives have gained widespread concerns  \cite{Gao2014,Langlands2005,Li2010,Li2014,Lin2007,Zeng2013}. And in recent years, convolution quadrature introduced in \cite{Lubich1988-1,Lubich1988-2,Lubich2004} has been widely used in discretizing the time fractional derivative operators \cite{Jin2013,Jin2014,Jin2015,Jin2015-2,Jin2016,Lubich1996}, of which the main advantage is that it does't need the assumption on the regularity of the solution and a higher order one can be obtained after some suitable modifications.
%It's easy to find the system \eqref{equmatrixequ} covers the time fractional derivatives and Laplacian operator in space mainly;  numerical methods for the time fractional derivatives have gained widespread concern  \cite{Gao2014,Langlands2005,Li2010,Li2014,Lin2007,Zeng2013}. And in recent years, convolution quadrature introduced in \cite{Lubich1988-1,Lubich1988-2,Lubich2004} has been widely used in discretizing time fractional derivative operators \cite{Jin2013,Jin2014,Jin2015,Jin2015-2,Jin2016,Lubich1996}, of which the main advantage is that it needn't the assumption on the regularity of solution and a higher order can be generalized by some suitable modifies. To our best knowledge,
It seems that the theoretical analysis and numerical simulation for the system of fractional partial differential equations are scare. Here, we try to fill the gap and provide the Sobolev regularity of solutions for the system \eqref{equmatrixequ}, i.e., we obtain the solutions $G_1(t),G_2(t)\in H^1_0(\Omega)\bigcap H^2(\Omega)$ for both smooth initial values $G_{1,0},G_{2,0}\in H^1_0(\Omega)\bigcap H^2(\Omega)$ and nonsmooth initial values $G_{1,0},G_{2,0}\in L^2(\Omega)$ for the homogeneous problem; see Theorem \ref{thmhomoregularity}. For the inhomogeneous problem, we prove that the solutions $G_1(t),G_2(t)\in H^1_0(\Omega)\bigcap H^2(\Omega)$ when $f_1,f_2\in L^2(\Omega)$ and $G_{1,0}=0$, $G_{2,0}=0$ in Theorem \ref{thminhomoregularity}. Furthermore, we use the convolution quadrature to discretize the time fractional derivatives and finite element method for the space operators, and then we give a complete theoretical analysis for the scheme under three different initial conditions. At last, numerical results for one- and two-dimensional examples are presented to illustrate the effectiveness of the numerical scheme.

The paper is organized as follows. In Section 2, we first introduce the notations and then focus on  the Sobolev regularity of the solutions for the homogeneous problem \eqref{equmatrixequ} with smooth and nonsmooth initial values and inhomogeneous problem \eqref{equmatrixequ} with vanishing initial value. In Section 3, we do the space discretization by the finite element method and provide the error estimates for the semidiscrete scheme under three different initial conditions. In Section 4, we use the convolution quadrature to discretize the time fractional derivatives and provide error estimates for the fully discrete scheme. In the last section, we confirm the  theoretically predicted convergence order by the one- and two-dimensional numerical examples. Throughout this paper, $C$ denotes a generic positive constant, whose value may differ at each occurrence.

\section{Regularity of the solution}
\subsection{Preliminaries}
 We first introduce some notations. Denote $G_1(t)$, $G_2(t)$, $f_1(t)$, and $f_2(t)$ as the functions $G_1(\cdot,t)$, $G_2(\cdot,t)$, $f_1(\cdot,t)$, and $f_2(\cdot,t)$ respectively.  Let $A=-\Delta:H^1_0(\Omega)\bigcap H^2(\Omega) \rightarrow L^2(\Omega)$ be the negative Laplacian operator with a zero Dirichlet boundary condition and $ {(\lambda_j,\varphi_j)} $ be its eigenvalues ordered non-decreasingly and the corresponding eigenfunctions normalized in the $ L^2(\Omega) $ norm.  For any $ r\geq  0 $, denote the space $ \dot{H}^r(\Omega)=\{v\in L^2(\Omega): A^{\frac{r}{2}}v\in L^2(\Omega) \}$ with the norm \cite{Thomee2006}
	\begin{equation*}
		\|v\|^2_{\dot{H}^r(\Omega)}=\sum_{j=1}^{\infty}\lambda_j^r(v,\varphi_j)^2.
	\end{equation*}
	Thus $ \dot{H}^0(\Omega)=L^2(\Omega) $, $\dot{H}^1(\Omega)=H^1_0(\Omega)$, and $\dot{H}^2(\Omega)=H^2(\Omega)\bigcap H^1_0(\Omega)$.
We denote $\|\cdot\|$ as the operator norm from $L^2(\Omega)$ to $L^2(\Omega)$, and use the notation `$\tilde{~}$' for taking Laplace transform.
	
Furthermore, for $\kappa>0$ and $\pi/2<\theta<\pi$, we denote sector $\Sigma_{\theta}$ and $\Sigma_{\theta,\kappa}$ as
    \begin{equation*}
        \begin{aligned}
        &\Sigma_{\theta}=\{z\in\mathbb{C}:z\neq 0,|\arg z|\leq \theta\},\\
        &\Sigma_{\theta,\kappa}=\{z\in\mathbb{C}:|z|>\kappa,|\arg z|\leq \theta\},\\
        \end{aligned}
    \end{equation*}
 and define the contour $\Gamma_{\theta,\kappa}$ by
    \begin{equation*}
    \Gamma_{\theta,\kappa}=\{r e^{-i\theta}: r\geq \kappa\}\bigcup\{\kappa e^{i\psi}: |\psi|\leq \theta\}\bigcup\{r e^{i\theta}: r\geq \kappa\},
    \end{equation*}
    where the circular arc is oriented counterclockwise and the two rays are oriented with an increasing imaginary part.

\subsection{A priori estimate of the solution}
According to the property of the transition matrix of a Markov chain \cite{Xu2018}, we can denote the matrix $\mathbf{M}$ as
\begin{equation*}
	\mathbf{M}=\left [\begin{matrix}
	m&1-m\\
	1-m&m
	\end{matrix}\right ],\quad m\in[0,1],
\end{equation*}
%thus
%\begin{equation*}
%(\mathbf{M}^T)^{-1}=\left [\begin{matrix}
%\frac{m}{2m-1}&\frac{m-1}{2m-1}\\
%\frac{m-1}{2m-1}&\frac{m}{2m-1}
%\end{matrix}\right ].
%\end{equation*}
and the fact that matrix $\mathbf{M}$ is invertible leads to
\begin{equation*}
(\mathbf{M}^T)^{-1}=\left [\begin{matrix}
\frac{m}{2m-1}&\frac{m-1}{2m-1}\\
\frac{m-1}{2m-1}&\frac{m}{2m-1}
\end{matrix}\right ].
\end{equation*}
Then the system \eqref{equmatrixequ} can be rewritten as
\begin{equation}\label{equrqtosol}
	\left \{
	\begin{aligned}
	&\frac{\partial G_1}{\partial t}+a~_0D^{1-\alpha_1}_tG_1-~_0D^{1-\alpha_1}_t\Delta G_1=a~_0D^{1-\alpha_2}_tG_2+f_1\quad\quad\quad\,\, {\rm in}\ \Omega,\ t\in[0,T],\\
	&\frac{\partial G_2}{\partial t}+a~_0D^{1-\alpha_2}_tG_2-~_0D^{1-\alpha_2}_t\Delta G_2=a~_0D^{1-\alpha_1}_tG_1+f_2   \quad\quad\,\,\quad {\rm in}\ \Omega,\ t\in[0,T],\\
	&\mathbf{G}(\cdot,0)=\mathbf{G}_0 \quad\quad\quad\quad\quad\quad\quad\quad\quad\quad\,\quad\quad\quad\quad\quad\quad\quad\quad\quad\quad\quad\quad {\rm in}\ \Omega,\\
	&\mathbf{G}=0 \quad\quad\quad\quad\quad\quad\quad\quad\quad\quad\quad\quad\quad\quad\quad\quad\quad\quad\quad\quad\quad\quad\quad\quad\quad {\rm on}\ \partial\Omega,\ t\in[0,T],
	\end{aligned}
	\right .
\end{equation}
where $ a=\frac{1-m}{2m-1} $.
Taking the Laplace transforms for the first two equations of the system \eqref{equrqtosol} and using the identity $\widetilde{~_0D^{\alpha}_tu}(z)=z^\alpha\tilde{u}(z)$ \cite{Podlubny1999}, we have
\begin{equation}\label{equequinlapform}
\begin{aligned}
	&z\tilde{G}_1+az^{1-\alpha_1}\tilde{G}_1+z^{1-\alpha_1}A\tilde{G}_1=az^{1-\alpha_2}\tilde{G}_2+\tilde{f}_1+G_{1,0},\\
	&z\tilde{G}_2+az^{1-\alpha_2}\tilde{G}_2+z^{1-\alpha_2}A\tilde{G}_2=az^{1-\alpha_1}\tilde{G}_1+\tilde{f}_2+G_{2,0}.
\end{aligned}
\end{equation}
Simple calculation leads to

\begin{equation*}
	\begin{aligned}
		&\tilde{G}_1=(z+az^{1-\alpha_1}+z^{1-\alpha_1}A)^{-1}\left(az^{1-\alpha_2}\tilde{G}_2+\tilde{f}_1+G_{1,0}\right),\\
		&\tilde{G}_2=(z+az^{1-\alpha_2}+z^{1-\alpha_2}A)^{-1}\left(az^{1-\alpha_1}\tilde{G}_1+\tilde{f}_2+G_{2,0}\right).
	\end{aligned}
\end{equation*}
Then we obtain
\begin{equation*}
	\begin{aligned}
		\tilde{G}_1=&H(z)\left ((z^{\alpha_2}+a+A)z^{\alpha_1-1}\tilde{f}_1+az^{\alpha_1-1}\tilde{f}_2\right )\\
		&+H(z)\left ((z^{\alpha_2}+a+A)z^{\alpha_1-1}G_{1,0}+az^{\alpha_1-1}G_{2,0}\right ),\\
		\tilde{G}_2=&H(z)\left (az^{\alpha_2-1}\tilde{f}_1+(z^{\alpha_1}+a+A)z^{\alpha_2-1}\tilde{f}_2\right )\\
		&+H(z)\left (az^{\alpha_2-1}G_{1,0}+(z^{\alpha_1}+a+A)z^{\alpha_2-1}G_{2,0}\right ),
	\end{aligned}
\end{equation*}
where
\begin{equation}\label{equdefHz}
	H(z)=\left((z^{\alpha_1}+a+A)(z^{\alpha_2}+a+A)-a^2\right)^{-1}.
\end{equation}
Introducing
\begin{equation}\label{H12}
\begin{aligned}
H_{\alpha_1}(z)=H(z)(z^{\alpha_2}+a+A),\quad
H_{\alpha_2}(z)=H(z)(z^{\alpha_1}+a+A),
\end{aligned}
\end{equation}
there exists
\begin{equation}\label{equsolformofLaplace}
\begin{aligned}
\tilde{G}_1=&\left (H_{\alpha_1}(z)z^{\alpha_1-1}\tilde{f}_1+aH(z)z^{\alpha_1-1}\tilde{f}_2\right )+\left (H_{\alpha_1}(z)z^{\alpha_1-1}G_{1,0}+aH(z)z^{\alpha_1-1}G_{2,0}\right ),\\
\tilde{G}_2=&\left (aH(z)z^{\alpha_2-1}\tilde{f}_1+H_{\alpha_2}(z)z^{\alpha_2-1}\tilde{f}_2\right )+\left (aH(z)z^{\alpha_2-1}G_{1,0}+H_{\alpha_2}(z)z^{\alpha_2-1}G_{2,0}\right ).
\end{aligned}
\end{equation}

To obtain the regularity of the solutions $G_1$ and $G_2$, we first provide some lemmas.
%Now, we provide some lemmas for obtaining the regularity of the solutions $G_1$ and $G_2$.
\begin{lemma}\label{lemestofa1easy}
	When $ z\in \Sigma_{\theta,\kappa}$ and $\kappa\geq2|a|^{1/\alpha_1}$, we have
	\begin{equation*}
		\left \|\left(z^{\alpha_1}+a+A\right)^{-1}\right \|\leq C|z|^{-\alpha_1}.
	\end{equation*}
\end{lemma}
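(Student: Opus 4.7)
The plan is to exploit the self-adjointness of $A$: since $A=-\Delta$ with Dirichlet boundary conditions is positive definite with eigenpairs $\{(\lambda_j,\varphi_j)\}_{j\geq 1}$ and $\lambda_j>0$, its spectral calculus reduces the operator norm to a scalar supremum,
\begin{equation*}
\bigl\|(z^{\alpha_1}+a+A)^{-1}\bigr\|=\sup_{j\geq 1}|z^{\alpha_1}+a+\lambda_j|^{-1},
\end{equation*}
so it suffices to establish the uniform lower bound $|z^{\alpha_1}+a+\lambda|\geq c|z|^{\alpha_1}$ for every $\lambda\geq\lambda_1>0$ and every $z\in\Sigma_{\theta,\kappa}$.

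The geometric fact driving the estimate is that $z\in\Sigma_\theta$ implies $z^{\alpha_1}\in\Sigma_{\alpha_1\theta}$, and since $\alpha_1<1$ and $\theta<\pi$ we have $\alpha_1\theta<\pi$, so $z^{\alpha_1}$ is confined to a strict sector about the positive real axis. The standard quantitative version is the elementary sector inequality: for $w\in\Sigma_\phi$ with $\phi<\pi$ and $\mu\geq 0$ real,
\begin{equation*}
|w+\mu|\geq c_\phi(|w|+\mu),
\end{equation*}
with $c_\phi>0$ depending only on $\phi$; this follows from $|w+\mu|^2=|w|^2+2\mu|w|\cos(\arg w)+\mu^2$ together with a case split on the sign of $\cos\phi$.

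I then absorb the shift $a$ by a case split on the sign of $a+\lambda$. If $a+\lambda\geq 0$ (automatic when $a\geq 0$), applying the sector inequality with $w=z^{\alpha_1}$ and $\mu=a+\lambda$ gives $|z^{\alpha_1}+a+\lambda|\geq c_{\alpha_1\theta}|z|^{\alpha_1}$. If $a+\lambda<0$, then necessarily $a<0$ and $|a+\lambda|\leq|a|$; combining the triangle inequality with the standing hypothesis $|z|^{\alpha_1}\geq\kappa^{\alpha_1}\geq 2|a|$ yields
\begin{equation*}
|z^{\alpha_1}+a+\lambda|\geq|z^{\alpha_1}|-|a+\lambda|\geq|z|^{\alpha_1}-|a|\geq\tfrac{1}{2}|z|^{\alpha_1}.
\end{equation*}
Taking the smaller of the two constants completes the scalar bound, and hence the lemma.

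The only subtle point is that $a=(1-m)/(2m-1)$ is not sign-definite---positive for $m>1/2$, negative for $m<1/2$---which is precisely why the hypothesis is phrased in terms of $|a|^{1/\alpha_1}$ rather than $a^{1/\alpha_1}$; the case $a+\lambda<0$ exists solely to accommodate $a<0$. I do not foresee any further technical difficulty, as this is a routine resolvent bound for a bounded shift of $A$ combined with the sectorial behavior of $z\mapsto z^{\alpha_1}$.
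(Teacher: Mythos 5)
Your proof is correct, but it takes a genuinely different route from the paper's. The paper does not diagonalize $A$: it observes that for $z\in\Sigma_{\theta,\kappa}$ with $\kappa\ge 2|a|^{1/\alpha_1}$ the shifted spectral point $z^{\alpha_1}+a$ remains in a sector $\Sigma_{\bar\theta}$ with $\pi/2<\bar\theta<\pi$, invokes the abstract sectorial resolvent estimate $\left\|(w+A)^{-1}\right\|\leq C|w|^{-1}$ (cited from Lubich--Sloan--Thom\'ee) at $w=z^{\alpha_1}+a$, and then compares $|z^{\alpha_1}+a|$ with $|z|^{\alpha_1}$. You instead use self-adjointness to collapse the operator norm to $\sup_{j}|z^{\alpha_1}+a+\lambda_j|^{-1}$ and prove the scalar lower bound directly via the elementary sector inequality together with a case split on the sign of $a+\lambda$. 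Your version is more self-contained and actually makes explicit where the hypothesis $\kappa\geq 2|a|^{1/\alpha_1}$ and the positivity of the spectrum enter (the paper asserts the membership $z^{\alpha_1}+a\in\Sigma_{\bar\theta}$ without proof, and that assertion is exactly your case analysis in disguise); the price is that your reduction to a scalar supremum uses the spectral theorem, so it is tied to $A$ being self-adjoint (harmless here, since both $A$ and $A_h$ are, but the paper's resolvent-based argument would survive for non-self-adjoint sectorial operators). One small arithmetic slip: from $\kappa\geq 2|a|^{1/\alpha_1}$ you may only conclude $|z|^{\alpha_1}\geq\kappa^{\alpha_1}\geq 2^{\alpha_1}|a|$, not $\geq 2|a|$, since $\alpha_1<1$; the last display in your $a+\lambda<0$ case should therefore end with $|z|^{\alpha_1}-|a|\geq\left(1-2^{-\alpha_1}\right)|z|^{\alpha_1}$, which still yields the lemma with a constant depending on $\alpha_1$. (The paper's own claim that the ratio $|z^{\alpha_1}+a|^{-1}/|z|^{-\alpha_1}$ is bounded by $2$ has the same cosmetic defect.)
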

\begin{proof}
	The fact $ z\in \Sigma_{\theta,\kappa}$ leads to that there exists $\pi/2<\bar{\theta}<\pi$,  $ z^{\alpha_1}+a\in \Sigma_{\bar{\theta}}$ holds, where $\Sigma_{\bar{\theta}}=\{z\in\mathbb{C}:|\arg z|\leq \bar{\theta}\}$. Combining the  resolvent estimate \cite{Lubich1996}
	\begin{equation*}
		\left \|\left(z+A\right)^{-1}\right \|\leq C|z|^{-1},\ z\in\Sigma_{\theta},
	\end{equation*}
we have
\begin{equation*}
\left \|\left(z^{\alpha_1}+a+A\right)^{-1}\right \|\leq C|z^{\alpha_1}+a|^{-1}.
\end{equation*}
Using the fact $|z|>2|a|^{1/\alpha_1}$ and
\begin{equation*}
\frac{|z^{\alpha_1}+a|^{-1}}{|z|^{-\alpha_1}}\leq 2,%= \left |1+\frac{a}{z^{\alpha_1}}\right |^{-1},
\end{equation*}
 we get
\begin{equation*}
\left \|\left(z^{\alpha_1}+a+A\right)^{-1}\right \|\leq C|z|^{-\alpha_1}.
\end{equation*}
\end{proof}
Similarly, we can also get
\begin{lemma}\label{lemestofa2easy}
	When $ z\in \Sigma_{\theta,\kappa}$ and $\kappa\geq2|a|^{1/\alpha_2}$, there exists
	\begin{equation*}
		\left \|\left(z^{\alpha_2}+a+A\right)^{-1}\right \|\leq C|z|^{-\alpha_2}.
	\end{equation*}
\end{lemma}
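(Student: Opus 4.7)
The plan is to mimic the proof of Lemma \ref{lemestofa1easy} verbatim, swapping the role of $\alpha_1$ with $\alpha_2$; since the two indices enter the hypothesis completely symmetrically, no new ideas are required, and the work reduces to checking that each geometric input still goes through.

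First I would verify the sector containment: for $z \in \Sigma_{\theta,\kappa}$ with $\pi/2 < \theta < \pi$, the power $z^{\alpha_2}$ lies in the sector $\Sigma_{\alpha_2\theta}$, and once $|z| \geq 2|a|^{1/\alpha_2}$ we have $|z^{\alpha_2}| \geq 2|a|$, so the additive perturbation by the real constant $a$ cannot rotate the argument past the angle $\pi$. Thus there exists some $\pi/2 < \bar\theta < \pi$ (depending on $\theta$, $\alpha_2$ and $a$ but not on the individual $z$) with $z^{\alpha_2} + a \in \Sigma_{\bar\theta}$. Applying the standard resolvent estimate for the negative Dirichlet Laplacian $A$ from \cite{Lubich1996},
\begin{equation*}
\left\|(w+A)^{-1}\right\| \leq C|w|^{-1}, \qquad w \in \Sigma_{\bar\theta},
\end{equation*}
at $w = z^{\alpha_2}+a$ then yields $\|(z^{\alpha_2}+a+A)^{-1}\| \leq C|z^{\alpha_2}+a|^{-1}$.

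Next I would convert the bound in $|z^{\alpha_2}+a|$ into the desired bound in $|z|^{\alpha_2}$. The hypothesis $|z| \geq 2|a|^{1/\alpha_2}$ gives $|a| \leq |z|^{\alpha_2}/2$, hence by the reverse triangle inequality $|z^{\alpha_2}+a| \geq |z|^{\alpha_2} - |a| \geq |z|^{\alpha_2}/2$, so $|z^{\alpha_2}+a|^{-1} \leq 2|z|^{-\alpha_2}$. Combining this with the resolvent estimate produces $\|(z^{\alpha_2}+a+A)^{-1}\| \leq C|z|^{-\alpha_2}$, which is the claim.

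I do not anticipate a genuine obstacle: the only subtlety is making sure the perturbed element $z^{\alpha_2}+a$ remains in some fixed sector away from the negative real axis, and the smallness condition on $\kappa$ is precisely what guarantees this. The constants $\bar\theta$ and $C$ are uniform in $z \in \Sigma_{\theta,\kappa}$ by construction, so the argument is clean and no additional ingredients beyond those used for Lemma \ref{lemestofa1easy} are needed.
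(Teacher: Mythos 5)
Your proof is correct and takes essentially the same route as the paper, which proves Lemma \ref{lemestofa2easy} simply by saying it is ``similar'' to Lemma \ref{lemestofa1easy}: the sector containment for $z^{\alpha_2}+a$, the resolvent estimate from \cite{Lubich1996}, and the reverse triangle inequality giving $|z^{\alpha_2}+a|^{-1}\leq 2|z|^{-\alpha_2}$ are exactly the intended steps. (One minor slip of wording: $\kappa\geq 2|a|^{1/\alpha_2}$ is a \emph{largeness} condition on $\kappa$, not a smallness condition, but you use it correctly.)
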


\begin{lemma}\label{LemestofHz}
When $ z\in \Sigma_{\theta,\kappa}$ and $\kappa>\max\left (2|a|^{1/\alpha_1},2|a|^{1/\alpha_2}\right )$,
\begin{equation*}
\left \|H(z)\right\|\leq C|z|^{-\alpha_1-\alpha_2}
\end{equation*}
holds, where $H(z)$ is defined by \eqref{equdefHz}.
\end{lemma}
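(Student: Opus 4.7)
The plan is to express $H(z)$ by factoring out the two resolvents $(z^{\alpha_1}+a+A)^{-1}$ and $(z^{\alpha_2}+a+A)^{-1}$, which Lemmas \ref{lemestofa1easy} and \ref{lemestofa2easy} already control, and then absorb the remaining $-a^2$ correction by a Neumann-series argument.

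First, I would observe that because the operators $z^{\alpha_1}+a+A$ and $z^{\alpha_2}+a+A$ are both functions of $A$, they commute. Therefore one may rewrite
\begin{equation*}
H(z)^{-1}=(z^{\alpha_1}+a+A)(z^{\alpha_2}+a+A)\bigl(I-a^2(z^{\alpha_1}+a+A)^{-1}(z^{\alpha_2}+a+A)^{-1}\bigr),
\end{equation*}
so that, formally,
\begin{equation*}
H(z)=\bigl(I-a^2(z^{\alpha_1}+a+A)^{-1}(z^{\alpha_2}+a+A)^{-1}\bigr)^{-1}(z^{\alpha_2}+a+A)^{-1}(z^{\alpha_1}+a+A)^{-1}.
\end{equation*}

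Next, I would invoke Lemmas \ref{lemestofa1easy} and \ref{lemestofa2easy}, which under the hypothesis $\kappa>\max(2|a|^{1/\alpha_1},2|a|^{1/\alpha_2})$ provide the uniform bounds $\|(z^{\alpha_i}+a+A)^{-1}\|\leq C|z|^{-\alpha_i}$ for $i=1,2$ on $\Sigma_{\theta,\kappa}$. Composing these two estimates yields
\begin{equation*}
\bigl\|a^2(z^{\alpha_1}+a+A)^{-1}(z^{\alpha_2}+a+A)^{-1}\bigr\|\leq C a^2|z|^{-\alpha_1-\alpha_2},
\end{equation*}
and since the right-hand side can be driven below $1/2$ by enlarging $\kappa$ (consistent with the standing hypothesis that $\kappa$ is sufficiently large), a standard Neumann series argument gives $\bigl\|\bigl(I-a^2(z^{\alpha_1}+a+A)^{-1}(z^{\alpha_2}+a+A)^{-1}\bigr)^{-1}\bigr\|\leq 2$.

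Combining the three estimates immediately yields $\|H(z)\|\leq C|z|^{-\alpha_1-\alpha_2}$, which is what we want. The only delicate step is making sure the Neumann series really converges on the full sector $\Sigma_{\theta,\kappa}$; this is handled uniformly provided $\kappa$ is chosen large enough relative to $a$ and to the constants supplied by the two preceding lemmas, which is compatible with the lemma's hypothesis on $\kappa$. No further structural work is required, so this should be the complete strategy.
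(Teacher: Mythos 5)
Your proposal is correct and follows essentially the same route as the paper: the paper also bounds $H(z)$ by writing $u=\left((z^{\alpha_1}+a+A)(z^{\alpha_2}+a+A)\right)^{-1}v+a^2\left((z^{\alpha_1}+a+A)(z^{\alpha_2}+a+A)\right)^{-1}u$, applying Lemmas \ref{lemestofa1easy} and \ref{lemestofa2easy}, and absorbing the $a^2$ term once $Ca^2|z|^{-\alpha_1-\alpha_2}<1/2$, which is just your Neumann-series step phrased as an a priori estimate. Your explicit remark about enlarging $\kappa$ beyond the stated threshold matches the paper's own ``when $\kappa$ is sufficiently large'' caveat.
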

\begin{proof}
From \eqref{equdefHz}, let
\begin{equation*}
((z^{\alpha_1}+a+A)(z^{\alpha_2}+a+A)-a^2)u=v,
\end{equation*}
which leads to
\begin{equation}\label{qeuuu}
u=\left((z^{\alpha_1}+a+A)(z^{\alpha_2}+a+A)\right)^{-1}v+a^2\left((z^{\alpha_1}+a+A)(z^{\alpha_2}+a+A)\right)^{-1}u.
\end{equation}
Taking $L_2$ norm on both sides of (\ref{qeuuu}) and using Lemmas \ref{lemestofa1easy} and \ref{lemestofa2easy}, we have
\begin{equation*}
\|u\|_{L^2(\Omega)}\leq C|z|^{-\alpha_1-\alpha_2}\|v\|_{L^2(\Omega)}+Ca^2|z|^{-\alpha_1-\alpha_2}\|u\|_{L^2(\Omega)}.
\end{equation*}
When $\kappa$ is sufficiently large, such that $Ca^2|z|^{-\alpha_1-\alpha_2}<1/2$, then we have
\begin{equation*}
\|u\|_{L^2(\Omega)}\leq C|z|^{-\alpha_1-\alpha_2}\|v\|_{L^2(\Omega)},
\end{equation*}
which leads to the desired estimate.
\end{proof}

\begin{lemma}\label{LemestofHzalpha1}
When $ z\in \Sigma_{\theta,\kappa}$ and $\kappa>\max\left (2|a|^{1/\alpha_1},2|a|^{1/\alpha_2}\right )$,
\begin{equation*}
\left \|H_{\alpha_1}(z)\right\|\leq C|z|^{-\alpha_1}
\end{equation*}
holds, where $H_{\alpha_1}(z)$ is defined by \eqref{H12}.
\end{lemma}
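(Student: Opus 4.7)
The plan is to mimic the strategy used in the proof of Lemma \ref{LemestofHz}, combining the resolvent estimates from Lemmas \ref{lemestofa1easy} and \ref{lemestofa2easy} with a Neumann-type absorption argument. Since $A$ commutes with scalar multiples of itself, the inner operators $z^{\alpha_1}+a+A$ and $z^{\alpha_2}+a+A$ commute, and Lemmas \ref{lemestofa1easy} and \ref{lemestofa2easy} guarantee that their inverses exist for $z\in\Sigma_{\theta,\kappa}$. This lets me factor
\begin{equation*}
H_{\alpha_1}(z) = \bigl((z^{\alpha_1}+a+A)(z^{\alpha_2}+a+A)-a^2\bigr)^{-1}(z^{\alpha_2}+a+A) = \bigl(z^{\alpha_1}+a+A-a^2(z^{\alpha_2}+a+A)^{-1}\bigr)^{-1}.
\end{equation*}

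Next, I set $u = H_{\alpha_1}(z)v$, so that $(z^{\alpha_1}+a+A)u - a^2(z^{\alpha_2}+a+A)^{-1}u = v$. Rearranging using the resolvent $(z^{\alpha_1}+a+A)^{-1}$ from Lemma \ref{lemestofa1easy} yields the fixed-point identity
\begin{equation*}
u = (z^{\alpha_1}+a+A)^{-1}v + a^2(z^{\alpha_1}+a+A)^{-1}(z^{\alpha_2}+a+A)^{-1}u.
\end{equation*}
Taking the $L^2(\Omega)$ norm and applying Lemmas \ref{lemestofa1easy} and \ref{lemestofa2easy} gives
\begin{equation*}
\|u\|_{L^2(\Omega)} \leq C|z|^{-\alpha_1}\|v\|_{L^2(\Omega)} + Ca^2|z|^{-\alpha_1-\alpha_2}\|u\|_{L^2(\Omega)}.
\end{equation*}

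Finally, for $\kappa$ large enough (possibly enlarging it from the hypothesis, which is consistent with the statement since $\kappa$ is only required to exceed a threshold), the prefactor $Ca^2|z|^{-\alpha_1-\alpha_2}$ is bounded by $1/2$ on $\Sigma_{\theta,\kappa}$. Absorbing that term into the left-hand side yields $\|u\|_{L^2(\Omega)}\leq C|z|^{-\alpha_1}\|v\|_{L^2(\Omega)}$, i.e., $\|H_{\alpha_1}(z)\|\leq C|z|^{-\alpha_1}$.

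I do not foresee a serious obstacle: the argument is essentially a perturbation of the estimate $\|(z^{\alpha_1}+a+A)^{-1}\|\leq C|z|^{-\alpha_1}$ by the small correction $a^2(z^{\alpha_2}+a+A)^{-1}$, whose smallness is precisely what Lemma \ref{lemestofa2easy} supplies for large $|z|$. The only point that requires care is the algebraic factorization of $H_{\alpha_1}(z)$, which relies on the commutativity of $A$ with scalars, and a possible tacit enlargement of $\kappa$ to ensure the geometric-series absorption step goes through; the analogous Lemma \ref{LemestofHz} is proved under the same convention, so this is harmless.
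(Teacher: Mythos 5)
Your proposal is correct and follows essentially the same route as the paper: the same factorization of $H_{\alpha_1}(z)$, the same fixed-point identity $u=(z^{\alpha_1}+a+A)^{-1}v+a^2\left((z^{\alpha_1}+a+A)(z^{\alpha_2}+a+A)\right)^{-1}u$, the same application of Lemmas \ref{lemestofa1easy} and \ref{lemestofa2easy}, and the same absorption step for $\kappa$ large enough that $Ca^2|z|^{-\alpha_1-\alpha_2}<1/2$. The remarks on commutativity and on the tacit enlargement of $\kappa$ match the paper's (implicit) conventions.
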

\begin{proof}
By \eqref{H12}, let
\begin{equation*}
(z^{\alpha_1}+a+A)u-a^2\left(z^{\alpha_2}+a+A\right)^{-1}u=v,
\end{equation*}
which implies that
\begin{equation}\label{equddd}
u=a^2\left((z^{\alpha_1}+a+A)(z^{\alpha_2}+a+A)\right)^{-1}u+\left(z^{\alpha_1}+a+A\right)^{-1}v.
\end{equation}
Performing $L_2$ norm on both sides of (\ref{equddd}) and using Lemmas \ref{lemestofa1easy} and \ref{lemestofa2easy}, we have
\begin{equation*}
\|u\|_{L^2(\Omega)}\leq C|z|^{-\alpha_1}\|v\|_{L^2(\Omega)}+Ca^2|z|^{-\alpha_1-\alpha_2}\|u\|_{L^2(\Omega)}.
\end{equation*}
Taking $\kappa$ sufficiently large to ensure $Ca^2|z|^{-\alpha_1-\alpha_2}<1/2$, we have
\begin{equation*}
\|u\|_{L^2(\Omega)}\leq C|z|^{-\alpha_1}\|v\|_{L^2(\Omega)},
\end{equation*}
which leads to the desired estimate.
\end{proof}
Similarly, we can also obtain
\begin{lemma}\label{LemestofHzalpha2}
When $ z\in \Sigma_{\theta,\kappa}$, where $\kappa>\max\left (2|a|^{1/\alpha_1},2|a|^{1/\alpha_2}\right )$, there exists
\begin{equation*}
\left \|H_{\alpha_2}(z)\right\|\leq C|z|^{-\alpha_2},
\end{equation*}
 where $H_{\alpha_2}(z)$ is defined by \eqref{H12}.
\end{lemma}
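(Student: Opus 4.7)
The plan is to mirror the argument of Lemma \ref{LemestofHzalpha1}, swapping the roles of $\alpha_1$ and $\alpha_2$ throughout. Setting $u = H_{\alpha_2}(z)v$, the definitions \eqref{equdefHz} and \eqref{H12} give
\begin{equation*}
\bigl[(z^{\alpha_1}+a+A)(z^{\alpha_2}+a+A) - a^2\bigr]\,u = (z^{\alpha_1}+a+A)\,v.
\end{equation*}
Since $\kappa \geq 2|a|^{1/\alpha_1}$ guarantees that $(z^{\alpha_1}+a+A)^{-1}$ exists by Lemma \ref{lemestofa1easy}, I would multiply through by this inverse on the left, reducing the identity to
\begin{equation*}
(z^{\alpha_2}+a+A)\,u - a^2\,(z^{\alpha_1}+a+A)^{-1}\,u = v.
\end{equation*}

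Solving for the leading $u$ yields the analogue of \eqref{equddd}, namely
\begin{equation*}
u = (z^{\alpha_2}+a+A)^{-1}\,v + a^2\,\bigl((z^{\alpha_1}+a+A)(z^{\alpha_2}+a+A)\bigr)^{-1}\,u.
\end{equation*}
Taking the $L^2(\Omega)$-norm on both sides and applying Lemmas \ref{lemestofa1easy} and \ref{lemestofa2easy} to bound the two resolvent factors, I obtain
\begin{equation*}
\|u\|_{L^2(\Omega)} \leq C|z|^{-\alpha_2}\,\|v\|_{L^2(\Omega)} + Ca^2\,|z|^{-\alpha_1-\alpha_2}\,\|u\|_{L^2(\Omega)}.
\end{equation*}

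Finally, by enlarging $\kappa$ if necessary so that $Ca^2|z|^{-\alpha_1-\alpha_2} < 1/2$ for every $z \in \Sigma_{\theta,\kappa}$, the second term on the right can be absorbed into the left-hand side, which gives the desired bound $\|H_{\alpha_2}(z)\| \leq C|z|^{-\alpha_2}$. There is no serious obstacle here: the only step requiring attention is the correct choice of which factor to invert in the algebraic reduction, since it is essential that the ``free'' resolvent remaining on the right-hand side be $(z^{\alpha_2}+a+A)^{-1}$, rather than $(z^{\alpha_1}+a+A)^{-1}$, in order to produce the power $|z|^{-\alpha_2}$ in the final estimate.
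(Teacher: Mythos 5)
Your proposal is correct and follows essentially the same route as the paper, which proves Lemma \ref{LemestofHzalpha1} by exactly this resolvent identity and absorption argument and then states Lemma \ref{LemestofHzalpha2} as following "similarly" with the roles of $\alpha_1$ and $\alpha_2$ swapped. Your closing remark correctly identifies the one point that matters: the free resolvent left on the right-hand side must be $(z^{\alpha_2}+a+A)^{-1}$ so that Lemma \ref{lemestofa2easy} yields the factor $|z|^{-\alpha_2}$.
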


\begin{lemma}\label{lemestofAHz}
	When $ z\in \Sigma_{\theta,\kappa}$, where $\kappa>\max\left (2|a|^{1/\alpha_1},2|a|^{1/\alpha_2}\right )$, one has
	\begin{equation*}
	\left \|AH(z)\right\|\leq C\min(|z|^{-\alpha_1},|z|^{-\alpha_2}).
	\end{equation*}
\end{lemma}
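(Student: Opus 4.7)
The plan is to mimic the proof of Lemma \ref{LemestofHz}, but track what happens when $A$ is applied. Setting $P_i=z^{\alpha_i}+a+A$ and $u=H(z)v$, I would start from the identity derived in that proof,
\begin{equation*}
u = (P_1 P_2)^{-1} v + a^2 (P_1 P_2)^{-1} u,
\end{equation*}
and apply $A$ to both sides. This is legitimate because $A$ is self-adjoint and commutes with each $P_i=A+(z^{\alpha_i}+a)$, hence with $P_i^{-1}$ and with $(P_1P_2)^{-1}$. Thus the problem reduces to bounding $\|A(P_1P_2)^{-1}\|$ by $C\min(|z|^{-\alpha_1},|z|^{-\alpha_2})$ and then using Lemma \ref{LemestofHz} to absorb the term involving $u$.

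For the bound on $\|A(P_1P_2)^{-1}\|$, the key observation is the algebraic identity $AP_i^{-1}=I-(z^{\alpha_i}+a)P_i^{-1}$. Combined with Lemmas \ref{lemestofa1easy} and \ref{lemestofa2easy} and the elementary estimate $|z^{\alpha_i}+a|\leq C|z|^{\alpha_i}$ valid on $\Sigma_{\theta,\kappa}$, this yields $\|AP_i^{-1}\|\leq C$ for $i=1,2$. Factoring $A(P_1P_2)^{-1}=(AP_1^{-1})P_2^{-1}$ and using Lemma \ref{lemestofa2easy} gives $\|A(P_1P_2)^{-1}\|\leq C|z|^{-\alpha_2}$; reversing the order of factors yields the analogous bound $C|z|^{-\alpha_1}$, so altogether $\|A(P_1P_2)^{-1}\|\leq C\min(|z|^{-\alpha_1},|z|^{-\alpha_2})$.

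Putting the pieces together, $\|Au\|\leq C\min(|z|^{-\alpha_1},|z|^{-\alpha_2})\|v\|+Ca^2\min(|z|^{-\alpha_1},|z|^{-\alpha_2})\|u\|$, and inserting $\|u\|\leq C|z|^{-\alpha_1-\alpha_2}\|v\|$ from Lemma \ref{LemestofHz} shows that the second contribution is of higher order in $|z|^{-1}$ and can be absorbed into the first by enlarging $\kappa$, giving the claimed estimate. There is no deep obstacle here; the one thing to be careful about is the bookkeeping of which factor gives which exponent and verifying that the absorption constant depends only on $\theta$ and $\kappa$ (not on $z$), which is immediate since the extra factor $|z|^{-\alpha_1-\alpha_2}$ is uniformly small on $\Sigma_{\theta,\kappa}$ for $\kappa$ large.
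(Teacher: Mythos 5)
Your proof is correct and rests on the same ingredients as the paper's: the identity $A=(z^{\alpha_i}+a+A)-(z^{\alpha_i}+a)$, the resolvent bounds of Lemmas \ref{lemestofa1easy} and \ref{lemestofa2easy}, and absorption of the $a^2$ perturbation for $\kappa$ large. The paper applies that identity directly to $H(z)$, writing $AH(z)=(z^{\alpha_i}+a+A)H(z)-(z^{\alpha_i}+a)H(z)$ and bounding each of the resulting pieces by its own fixed-point argument, whereas you apply it to $(P_1P_2)^{-1}$ and transfer to $H(z)$ through the fixed-point equation; this is only a reorganization (arguably a slightly cleaner one, since commutativity lets you do the absorption step once) of essentially the same argument.
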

\begin{proof}
	First, there exist the equalities
	\begin{equation*}
		\begin{aligned}
		AH(z)&=(z^{\alpha_1}+a+A)H(z)-(z^{\alpha_1}+a)H(z)\\
		&=(z^{\alpha_2}+a+A)H(z)-(z^{\alpha_2}+a)H(z).
		\end{aligned}
	\end{equation*}
	To estimate $AH(z)$, we need to estimate $(z^{\alpha_1}+a+A)H(z)$, $(z^{\alpha_1}+a)H(z)$, $(z^{\alpha_2}+a+A)H(z)$, and $(z^{\alpha_2}+a)H(z)$. As for $(z^{\alpha_1}+a+A)H(z)$, let
	\begin{equation*}
		(z^{\alpha_2}+a+A)u-a^2(z^{\alpha_1}+a+A)^{-1}u=v,
	\end{equation*}
which results in
	\begin{equation*}
		u={\left(z^{\alpha_2}+a+A\right)^{-1}}v+a^2{\left((z^{\alpha_1}+a+A)(z^{\alpha_2}+a+A)\right)^{-1}}u.
	\end{equation*}
	Taking $\kappa$ sufficiently large leads to
	\begin{equation}
		\|(z^{\alpha_1}+a+A)H(z)\|\leq C|z|^{-\alpha_2}.
	\end{equation}
	Similarly, we have
	\begin{equation*}
		\begin{aligned}
		&\|(z^{\alpha_1}+a)H(z)\|\leq C|z|^{-\alpha_2},\\ &\|(z^{\alpha_2}+a)H(z)\|\leq C|z|^{-\alpha_1},\\
		&\|(z^{\alpha_2}+a+A)H(z)\|\leq C|z|^{-\alpha_1}.	
		\end{aligned}
	\end{equation*}
	Thus, the proof is completed.
\end{proof}
Similarly, we have the estimate
\begin{lemma}\label{LemestofAHzalpha}
	When $ z\in \Sigma_{\theta,\kappa}$ and $\kappa>\max\left (2|a|^{1/\alpha_1},2|a|^{1/\alpha_2}\right )$,
	\begin{equation*}
	\left \|AH_{\alpha_1}(z)\right\|\leq C
	\,\,\,{\rm and} \,\,\,
	\left \|AH_{\alpha_2}(z)\right\|\leq C
	\end{equation*}
hold.
\end{lemma}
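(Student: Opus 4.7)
The plan is to reduce $AH_{\alpha_1}(z)$ to quantities already controlled by Lemmas \ref{LemestofHz}--\ref{lemestofAHz}, exploiting the fact that $A$, the resolvents $(z^{\alpha_i}+a+A)^{-1}$, and $H(z)$ are all rational functions of $A$ and hence mutually commute. First I would write $A = (z^{\alpha_1}+a+A) - (z^{\alpha_1}+a)$ and accordingly split
\begin{equation*}
AH_{\alpha_1}(z) = (z^{\alpha_1}+a+A)H(z)(z^{\alpha_2}+a+A) - (z^{\alpha_1}+a)H(z)(z^{\alpha_2}+a+A).
\end{equation*}
Using the defining identity $(z^{\alpha_1}+a+A)(z^{\alpha_2}+a+A) = H(z)^{-1} + a^2$, the first piece collapses to $I + a^2 H(z)$, which by Lemma \ref{LemestofHz} is bounded by $1 + Ca^2|z|^{-\alpha_1-\alpha_2}$, uniformly bounded whenever $|z|\geq\kappa$.

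For the second piece, I would combine the intermediate bound $\|(z^{\alpha_2}+a+A)H(z)\|\leq C|z|^{-\alpha_1}$ obtained within the proof of Lemma \ref{lemestofAHz} with the elementary scalar inequality $|z^{\alpha_1}+a| \leq C|z|^{\alpha_1}$, which holds because $|z|\geq \kappa \geq 2|a|^{1/\alpha_1}$. The product is therefore bounded by a constant independent of $z$, and summing the two pieces gives $\|AH_{\alpha_1}(z)\|\leq C$. The estimate for $AH_{\alpha_2}(z)$ follows by the symmetric argument obtained by interchanging the roles of $\alpha_1$ and $\alpha_2$ (in particular, writing $A = (z^{\alpha_2}+a+A) - (z^{\alpha_2}+a)$ and invoking the companion bound $\|(z^{\alpha_1}+a+A)H(z)\|\leq C|z|^{-\alpha_2}$).

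The only point requiring mild care is the justification of the collapse $(z^{\alpha_1}+a+A)H(z)(z^{\alpha_2}+a+A) = I + a^2H(z)$; this is straightforward because every operator involved lies in the commutative algebra generated by $A$ and the relevant bounded resolvents of $A$, so the algebraic manipulation is legitimate on all of $L^2(\Omega)$ once $\kappa$ is large enough for the resolvents to exist. Beyond that, the proof is a direct combination of the previously established resolvent estimates, with no new analytical input required.
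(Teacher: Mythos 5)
Your argument is correct and is essentially the route the paper intends: the paper gives no explicit proof of this lemma, introducing it with ``Similarly,'' which points to the decomposition $A=(z^{\alpha_1}+a+A)-(z^{\alpha_1}+a)$ used for Lemma \ref{lemestofAHz}, and your splitting of $AH_{\alpha_1}(z)$ into $(z^{\alpha_1}+a+A)H_{\alpha_1}(z)-(z^{\alpha_1}+a)H_{\alpha_1}(z)$ is exactly that, with the first piece collapsing to $I+a^2H(z)$ via the defining identity and the second controlled by $\|H_{\alpha_1}(z)\|\leq C|z|^{-\alpha_1}$ together with $|z^{\alpha_1}+a|\leq C|z|^{\alpha_1}$ for $|z|\geq\kappa$. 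Your remark on commutativity within the functional calculus of $A$ adequately justifies the algebraic collapse, so no further input is needed.
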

Now, we are ready to provide the priori estimates for the solutions $G_1$ and $G_2$ of the homogeneous problem \eqref{equrqtosol} with both smooth and nonsmooth initial values.
\begin{theorem}\label{thmhomoregularity}
When $f_1=0$, $ f_2=0 $, assuming $G_{1,0},G_{2,0}\in L^2(\Omega)$, we have
\begin{equation}
\begin{aligned} %t^{\min(-(\nu-1)\alpha_2,0)}; t^{\min(-(\nu-1)\alpha_1,0)}
&\|A^\nu G_1(t)\|_{L^2(\Omega)}\leq C\left (t^{-\nu\alpha_1}\|G_{1,0}\|_{L^2(\Omega)}+\|G_{2,0}\|_{L^2(\Omega)}\right ),\\
&\|A^\nu G_2(t)\|_{L^2(\Omega)}\leq C\left (\|G_{1,0}\|_{L^2(\Omega)}+t^{-\nu\alpha_2}\|G_{2,0}\|_{L^2(\Omega)}\right )
\end{aligned}
\end{equation}
for $ \nu=0,1 $; furthermore, if $G_{1,0},G_{2,0}\in H^1_0(\Omega)\bigcap H^2(\Omega)$, there exist
\begin{equation}
\begin{aligned}
&\|A G_1(t)\|_{L^2(\Omega)}\leq C\left (\|AG_{1,0}\|_{L^2(\Omega)}+\|AG_{2,0}\|_{L^2(\Omega)}\right ),\\
&\|A G_2(t)\|_{L^2(\Omega)}\leq C\left (\|AG_{1,0}\|_{L^2(\Omega)}+\|AG_{2,0}\|_{L^2(\Omega)}\right ).
\end{aligned}
\end{equation}
\end{theorem}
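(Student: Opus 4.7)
The plan is to invert the Laplace representations \eqref{equsolformofLaplace} along the Bromwich-type contour $\Gamma_{\theta,\kappa}$ and to bound each factor of the integrand by the resolvent estimates of Lemmas \ref{LemestofHz}--\ref{LemestofAHzalpha}. Setting $f_1=f_2=0$ in \eqref{equsolformofLaplace} and taking the inverse Laplace transform yields
\begin{equation*}
G_1(t)=\frac{1}{2\pi i}\int_{\Gamma_{\theta,\kappa}}e^{zt}\left(H_{\alpha_1}(z)z^{\alpha_1-1}G_{1,0}+aH(z)z^{\alpha_1-1}G_{2,0}\right)dz,
\end{equation*}
and analogously for $G_2(t)$. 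Since $H(z)$ and $H_{\alpha_i}(z)$ are rational functions of $A$, they commute with $A$, so $A^\nu$ may be either absorbed into the symbol or moved onto the initial datum, whichever is sharper. Throughout, I will take $\kappa=\max(1/t,\kappa_0)$ with $\kappa_0>\max(2|a|^{1/\alpha_1},2|a|^{1/\alpha_2})$ fixed so that the lemmas apply uniformly on $\Gamma_{\theta,\kappa}$.

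For the nonsmooth-data bound with $\nu=1$, I would combine $\|AH_{\alpha_1}(z)\|\leq C$ from Lemma \ref{LemestofAHzalpha} with the sharper estimate $\|AH(z)\|\leq C|z|^{-\alpha_1}$ (the smaller branch in Lemma \ref{lemestofAHz}) to bound the integrand of $AG_1$ by $C|z|^{\alpha_1-1}\|G_{1,0}\|+C|z|^{-1}\|G_{2,0}\|$. A routine split of $\Gamma_{\theta,1/t}$ into the two rays and the arc then produces
\begin{equation*}
\int_{\Gamma_{\theta,1/t}}|e^{zt}||z|^{\alpha_1-1}|dz|\leq Ct^{-\alpha_1},\qquad\int_{\Gamma_{\theta,1/t}}|e^{zt}||z|^{-1}|dz|\leq C,
\end{equation*}
which yields exactly $\|AG_1(t)\|\leq C(t^{-\alpha_1}\|G_{1,0}\|+\|G_{2,0}\|)$. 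The $\nu=0$ case runs along the same lines using $\|H_{\alpha_1}(z)\|\leq C|z|^{-\alpha_1}$ and $\|H(z)\|\leq C|z|^{-\alpha_1-\alpha_2}$; here the contour integral against the $G_{2,0}$ piece produces a benign $Ct^{\alpha_2}$ prefactor, which is absorbed into the constant on $[0,T]$. The estimates for $G_2$ follow by the obvious symmetry $\alpha_1\leftrightarrow\alpha_2$.

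For the smooth-data case I would commute $A$ past $H(z)$ and $H_{\alpha_i}(z)$ and write
\begin{equation*}
AG_1(t)=\frac{1}{2\pi i}\int_{\Gamma_{\theta,\kappa}}e^{zt}\left(H_{\alpha_1}(z)z^{\alpha_1-1}AG_{1,0}+aH(z)z^{\alpha_1-1}AG_{2,0}\right)dz,
\end{equation*}
after which the same lemma bounds combined with the contour split give $\int|e^{zt}||z|^{-1}|dz|\leq C$ and $\int|e^{zt}||z|^{-\alpha_2-1}|dz|\leq Ct^{\alpha_2}$ acting on $AG_{1,0}$ and $AG_{2,0}$ respectively; both are uniformly bounded on $[0,T]$, which yields the stated estimate. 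The main technical subtlety will be producing the asymmetric bound in which only the ``diagonal'' initial value carries the singular weight $t^{-\nu\alpha_i}$ while the ``off-diagonal'' one stays bounded; this is precisely where the stronger $\min$-form of Lemma \ref{lemestofAHz} is essential, since picking the $|z|^{-\alpha_1}$ branch for the $G_{2,0}$ term of $AG_1$ (and the $|z|^{-\alpha_2}$ branch for $AG_2$) keeps the off-diagonal integral $t$-independent instead of blowing up as $t\to 0^+$.
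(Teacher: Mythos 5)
Your proposal is correct and follows essentially the same route as the paper's proof: invert \eqref{equsolformofLaplace} along $\Gamma_{\theta,\kappa}$ with $\kappa\simeq 1/t$, bound the symbols by Lemmas \ref{LemestofHz}--\ref{LemestofAHzalpha}, split the contour into the two rays and the arc, and for smooth data commute $A$ onto the initial values. In particular, your observation that one must take the $|z|^{-\alpha_1}$ branch of the $\min$ in Lemma \ref{lemestofAHz} for the $G_{2,0}$ term of $AG_1$ (and the $|z|^{-\alpha_2}$ branch for $AG_2$) is precisely the step the paper performs via the remark that $\min(|z|^{-\alpha_1},|z|^{-\alpha_2})\leq|z|^{-\alpha_1}$.
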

\begin{proof}
For given $t$, we take $\kappa\geq1/t$ and ensure that $\kappa$ is large enough to satisfy the conditions in Lemmas \ref{lemestofa1easy}--\ref{LemestofAHzalpha}. Letting $\mathcal{L}^{-1}$ denote the inverse Laplace transform and taking the inverse Laplace transform on \eqref{equsolformofLaplace}, we have
\begin{equation}\label{equregularinveseform1}
\begin{aligned}
G_1(t)=&\mathcal{L}^{-1}\{H_{\alpha_1}(z)z^{\alpha_1-1}G_{1,0}+aH(z)z^{\alpha_1-1}G_{2,0}\}
\end{aligned}
\end{equation}
and
\begin{equation}\label{equregularinveseform2}
\begin{aligned}
G_2(t)=&\mathcal{L}^{-1}\{aH(z)z^{\alpha_2-1}G_{1,0}+H_{\alpha_2}(z)z^{\alpha_2-1}G_{2,0}\}.
\end{aligned}
\end{equation}
Taking $L_2$ norm on both sides of (\ref{equregularinveseform1}) and using Lemmas \ref{LemestofHz} and \ref{LemestofHzalpha1}, there exists
\begin{equation}
\begin{aligned}
\|G_1(t)\|_{L^2(\Omega)}=&\left \|\mathcal{L}^{-1}\{H_{\alpha_1}(z)z^{\alpha_1-1}G_{1,0}+aH(z)z^{\alpha_1-1}G_{2,0}\}\right \|_{L^2(\Omega)}\\
\leq& C\int_{\Gamma_{\theta,\kappa}}e^{\Re(z)t}\left (|z|^{-1}\|G_{1,0}\|_{L^2(\Omega)}+|z|^{-\alpha_2-1}\|G_{2,0}\|_{L^2(\Omega)}\right )|dz|,
\end{aligned}
\end{equation}
where $\Re(z)$ denotes the real part of $z$. Since $1< T/t$, we have
\begin{equation*}
\begin{aligned}
&\|G_1(t)\|_{L^2(\Omega)}\\
\leq&C\left (\int^\infty_{\kappa}r^{-1}e^{rt\cos(\theta)}dr+\int^{\theta}_{-\theta} e^{\cos(\psi)\kappa t}d\psi\right )\|G_{1,0}\|_{L^2(\Omega)}\\
&+C\left (\int^\infty_{\kappa}r^{-\alpha_2-1}e^{rt\cos(\theta)}dr+\int^{\theta}_{-\theta}\kappa^{-\alpha_2}e^{\cos(\psi)\kappa t}d\psi\right )\|G_{2,0}\|_{L^2(\Omega)}\\
\leq & C(\|G_{1,0}\|_{L^2(\Omega)}+\|G_{2,0}\|_{L^2(\Omega)}).
\end{aligned}
\end{equation*}
Similarly, one can also get
\begin{equation*}
\|G_2(t)\|_{L^2(\Omega)}\leq C(\|G_{1,0}\|_{L^2(\Omega)}+\|G_{2,0}\|_{L^2(\Omega)}).
\end{equation*}
To get the bound of $ \|AG_1(t)\|_{L^2(\Omega)} $, let the operator $A$ act on both sides of \eqref{equregularinveseform1} and obtain
\begin{equation}\label{eqAG}
\begin{aligned}
AG_1(t)=&\mathcal{L}^{-1}\{A\left (H_{\alpha_1}(z)z^{\alpha_1-1}G_{1,0}+aH(z)z^{\alpha_1-1}G_{2,0}\right )\}.
\end{aligned}
\end{equation}
Taking $L_2$ norm on both sides of (\ref{eqAG}) and using Lemmas \ref{lemestofAHz} and \ref{LemestofAHzalpha}, we have
\begin{equation}\label{AGm}
\begin{aligned}
\|AG_1(t)\|_{L^2(\Omega)}=&\left \|\mathcal{L}^{-1}\{A\left (H_{\alpha_1}(z)z^{\alpha_1-1}G_{1,0}+aH(z)z^{\alpha_1-1}G_{2,0}\right )\}\right \|_{L^2(\Omega)}\\
\leq& C\int_{\Gamma_{\theta,\kappa}}e^{\Re(z)t}\left (|z|^{\alpha_1-1}\|G_{1,0}\|_{L^2(\Omega)}+|z|^{-1}\|G_{2,0}\|_{L^2(\Omega)}\right )|dz|,
\end{aligned}
\end{equation}
where the fact $ \min(|z|^{-\alpha_1},|z|^{-\alpha_2})\leq |z|^{-\alpha_1}$ is used. Since  $\kappa<\kappa T/t$, we obtain
\begin{equation*}
\begin{aligned}
&\|AG_1(t)\|_{L^2(\Omega)}\\
\leq&C\left (\int^\infty_{\kappa}r^{\alpha_1-1}e^{rt\cos(\theta)}dr+\int^{\theta}_{-\theta}\kappa^{\alpha_1}e^{\cos(\psi)\kappa t}d\psi\right )\|G_{1,0}\|_{L^2(\Omega)}\\
&+C\left (\int^\infty_{\kappa}r^{-1}e^{rt\cos(\theta)}dr+\int^{\theta}_{-\theta}e^{\cos(\psi)\kappa t}d\psi\right )\|G_{2,0}\|_{L^2(\Omega)}\\
\leq& C(t^{-\alpha_1}\|G_{1,0}\|_{L^2(\Omega)}+\|G_{2,0}\|_{L^2(\Omega)}).
\end{aligned}
\end{equation*}
Analogously, there exists
\begin{equation*}
\|AG_2(t)\|_{L^2(\Omega)}\leq C(\|G_{1,0}\|_{L^2(\Omega)}+t^{-\alpha_2}\|G_{2,0}\|_{L^2(\Omega)}).
\end{equation*}
At the same time, from \eqref{AGm} we can also obtain
\begin{equation*}
\|AG_1(t)\|_{L^2(\Omega)}\leq C(\|AG_{1,0}\|_{L^2(\Omega)}+\|AG_{2,0}\|_{L^2(\Omega)})
\end{equation*}
and
\begin{equation*}
\|AG_2(t)\|_{L^2(\Omega)}\leq C(\|AG_{1,0}\|_{L^2(\Omega)}+\|AG_{2,0}\|_{L^2(\Omega)}).
\end{equation*}
This completes the proof of this lemma.
\end{proof}
Next, we obtain the estimates of the solutions $G_1$ and $G_2$ for inhomogeneous problem \eqref{equrqtosol} with vanishing initial value.
\begin{theorem}\label{thminhomoregularity}
	When the initial value $\mathbf{G}_0=[0,0]^T$, we have
	\begin{equation}
	\begin{aligned}
	&\|A^\nu G_1(t)\|_{L^2(\Omega)}\leq C\left (\int_0^ts^{-\nu\alpha_1}\|f_1(t-s)\|_{L^2(\Omega)}ds+\int_{0}^t\|f_2(t-s)\|_{L^2(\Omega)}ds\right ),\\
	&\|A^\nu G_2(t)\|_{L^2(\Omega)}\leq C\left (\int_0^t\|f_1(t-s)\|_{L^2(\Omega)}ds+\int_0^ts^{-\nu\alpha_2}\|f_2(t-s)\|_{L^2(\Omega)}ds\right )
	\end{aligned}
	\end{equation}
for $\nu=0,1$.
\end{theorem}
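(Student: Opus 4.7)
The starting point is the Laplace-domain representation \eqref{equsolformofLaplace}. Setting $G_{1,0}=G_{2,0}=0$ collapses that system to
\begin{equation*}
\tilde G_1 = H_{\alpha_1}(z)\,z^{\alpha_1-1}\tilde f_1 + a\,H(z)\,z^{\alpha_1-1}\tilde f_2,\qquad
\tilde G_2 = a\,H(z)\,z^{\alpha_2-1}\tilde f_1 + H_{\alpha_2}(z)\,z^{\alpha_2-1}\tilde f_2.
\end{equation*}
Inverting these Laplace transforms and invoking the convolution theorem expresses each $G_i$ as a sum of two operator-valued convolutions $\int_0^t K(s)f_j(t-s)\,ds$, whose kernels are Bromwich integrals of the form $K(s)=\frac{1}{2\pi i}\int_{\Gamma_{\theta,\kappa}}e^{zs}\mathcal{K}(z)\,dz$ over the contour introduced in Section~2.1. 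Applying $A^{\nu}$ moves inside the contour integral, so my task reduces to bounding the $L^2$-operator norm of $A^{\nu}K(s)$ uniformly in $s\in(0,t]$ and then invoking Young's inequality for convolutions.

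The plan is to handle the four kernels one at a time, using precisely one of Lemmas~\ref{LemestofHz}--\ref{LemestofAHzalpha} for each. For the diagonal kernel of $G_1$, write $\|A^\nu H_{\alpha_1}(z)\,z^{\alpha_1-1}\|\le C|z|^{\nu\alpha_1-1}$ using Lemma~\ref{LemestofHzalpha1} when $\nu=0$ and Lemma~\ref{LemestofAHzalpha} when $\nu=1$. For the off-diagonal kernel $aA^{\nu}H(z)z^{\alpha_1-1}$ the key point is that Lemma~\ref{lemestofAHz} supplies the \emph{min} estimate $\|AH(z)\|\le C\min(|z|^{-\alpha_1},|z|^{-\alpha_2})$; choosing the $|z|^{-\alpha_1}$ branch yields $\|aA^{\nu}H(z)z^{\alpha_1-1}\|\le C|z|^{-(1-\nu)\alpha_2-1}$ for $\nu=0$ (via Lemma~\ref{LemestofHz}) and $\le C|z|^{-1}$ for $\nu=1$. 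The symmetric analysis, with the $|z|^{-\alpha_2}$ branch, handles the kernels of $G_2$.

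Having reduced each kernel to a bound of the form $\|A^\nu\mathcal{K}(z)\|\le C|z|^{\beta}$ for an appropriate $\beta$, the final step is the standard contour estimate: parametrize $\Gamma_{\theta,\kappa}$ with $\kappa=1/s$, split into the two rays and the arc, and compute
\begin{equation*}
\bigl\|A^\nu K(s)\bigr\|\le C\!\left(\int_{\kappa}^{\infty}r^{\beta}e^{rs\cos\theta}\,dr+\int_{-\theta}^{\theta}\kappa^{\beta+1}e^{\kappa s\cos\psi}\,d\psi\right)\le C\,s^{-\beta-1}.
\end{equation*}
Inserting the four choices of $\beta$ gives diagonal kernels of size $s^{-\nu\alpha_i}$ and off-diagonal kernels of size at most $C$, which is exactly what is needed to assemble the two claimed estimates by the triangle inequality applied to the convolutions.

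The routine steps (contour estimates, algebraic manipulations) are just repetitions of what was done in Theorem~\ref{thmhomoregularity}. The real subtlety — and the one point that requires care — is the selection inside the \emph{min} from Lemma~\ref{lemestofAHz}: one must pair the $|z|^{-\alpha_1}$ branch with the $z^{\alpha_1-1}$ factor and the $|z|^{-\alpha_2}$ branch with $z^{\alpha_2-1}$, otherwise the off-diagonal kernel would inherit an unwanted singularity at $s=0$ and spoil the second summand in each inequality. Once that matching is made explicit, the remaining computations are mechanical.
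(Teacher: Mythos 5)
Your proposal is correct and follows essentially the same route as the paper: invert the Laplace representation \eqref{equsolformofLaplace} with $\mathbf{G}_0=0$, use the convolution rule, bound the four kernels on $\Gamma_{\theta,\kappa}$ via Lemmas \ref{LemestofHz}--\ref{LemestofAHzalpha} (including the same selection $\min(|z|^{-\alpha_1},|z|^{-\alpha_2})\leq|z|^{-\alpha_1}$ for $G_1$ and the symmetric choice for $G_2$), and finish with the standard contour estimate with $\kappa\sim 1/s$. Your write-up merely makes the kernel decomposition and the Young-type convolution bound more explicit than the paper does; no substantive difference.
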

\begin{proof}
For given $t$, we take $\kappa\geq1/t$ and ensure that $\kappa$ is large enough to satisfy the conditions in Lemmas \ref{lemestofa1easy}--\ref{LemestofAHzalpha}. Performing the inverse Laplace transform on \eqref{equsolformofLaplace} leads to
	\begin{equation}\label{equregularinveseformF1}
	\begin{aligned}
	G_1(t)=&\mathcal{L}^{-1}\left \{H_{\alpha_1}(z)z^{\alpha_1-1}\tilde{f}_1+aH(z)z^{\alpha_1-1}\tilde{f}_2\right \}
	\end{aligned}
	\end{equation}
	and
	\begin{equation}\label{equregularinveseformF2}
	\begin{aligned}
	G_2(t)=&\mathcal{L}^{-1}\left \{aH(z)z^{\alpha_2-1}\tilde{f}_1+H_{\alpha_2}(z)z^{\alpha_2-1}\tilde{f}_2\right \}.
	\end{aligned}
	\end{equation}
	Taking $L_2$ norm on both sides of (\ref{equregularinveseformF1}) and (\ref{equregularinveseformF2}), and using Lemmas \ref{LemestofHz} and \ref{LemestofHzalpha1}, we have
	\begin{equation}
	\begin{aligned}
&	\|G_1(t)\|_{L^2(\Omega)}
\\
&=\left \|\mathcal{L}^{-1}\{H_{\alpha_1}(z)z^{\alpha_1-1}\tilde{f}_1+aH(z)z^{\alpha_1-1}\tilde{f}_2\}\right \|_{L^2(\Omega)}\\
	& \leq C\int_{\Gamma_{\theta,\kappa}}e^{\Re(z)t}|z|^{-1}dz\ast\|f_1\|_{L^2(\Omega)}+C\int_{\Gamma_{\theta,\kappa}}e^{\Re(z)t}|z|^{-1-\alpha_2}dz\ast\|f_2\|_{L^2(\Omega)},
	\end{aligned}
	\end{equation}
	where `$\ast$' denotes the convolution and the convolution rule of the inverse Laplace transform  $\mathcal{L}^{-1}\{\widetilde{uv}\}=\mathcal{L}^{-1}\{\tilde{u}\}\ast v$ is used.
	Being similar to the proof of Theorem \ref{thmhomoregularity}, one can get
	\begin{equation*}
	\|G_1(t)\|_{L^2(\Omega)}\leq C\left (1\ast\|f_1\|_{L^2(\Omega)}+1\ast\|f_2\|_{L^2(\Omega)}\right ).
	\end{equation*}
And also there exists
	\begin{equation*}
	\|G_2(t)\|_{L^2(\Omega)}\leq C\left (1\ast\|f_1\|_{L^2(\Omega)}+1\ast\|f_2\|_{L^2(\Omega)}\right ).
	\end{equation*}
	To estimate $ \|AG_1(t)\|_{L^2(\Omega)} $, we have
	\begin{equation}\label{eqAG1111}
	\begin{aligned}
	AG_1(t)=&\mathcal{L}^{-1}\left \{A\left (H_{\alpha_1}(z)z^{\alpha_1-1}\tilde{f}_1+aH(z)z^{\alpha_1-1}\tilde{f}_2\right )\right \}.
	\end{aligned}
	\end{equation}
	Taking $L_2$ norm on both sides of (\ref{eqAG1111}) and using Lemmas \ref{lemestofAHz} and \ref{LemestofAHzalpha}, we have
		\begin{equation*}
	\begin{aligned}
	\|AG_1(t)\|_{L^2(\Omega)}=&\left \|\mathcal{L}^{-1}\left \{A\left (H_{\alpha_1}(z)z^{\alpha_1-1}\tilde{f}_1+aH(z)z^{\alpha_1-1}\tilde{f}_2\right )\right \}\right \|_{L^2(\Omega)}\\
	\leq& C\int_{\Gamma_{\theta,\kappa}}e^{\Re(z)t}(|z|^{\alpha_1-1}\|\tilde{f}_1\|_{L^2(\Omega)}+|z|^{-1}\|\tilde{f}_2\|_{L^2(\Omega)})|dz|,
	\end{aligned}
	\end{equation*}
	where the fact $ \min(|z|^{-\alpha_1},|z|^{-\alpha_2})\leq |z|^{-\alpha_1}$ is used. Thus we have
	\begin{equation*}
	\|AG_1(t)\|_{L^2(\Omega)}\leq C\left (t^{-\alpha_1}\ast\|f_1(t)\|_{L^2(\Omega)}+1\ast\|f_2(t)\|_{L^2(\Omega)}\right ).
	\end{equation*}
The estimate of $\|AG_2(t)\|_{L^2(\Omega)}$ can be, similarly, obtained.
\end{proof}

\section{Space discretization and error analysis}
In this section, we discretize the space derivatives by the finite element method and provide the error estimates for the space semidiscrete scheme of the homogeneous problem \eqref{equrqtosol} with smooth and nonsmooth initial values and the inhomogeneous problem \eqref{equrqtosol} with vanishing initial value. Let $\mathcal{T}_h$ be a shape regular quasi-uniform partitions of the domain $\Omega$, where $h$ is the maximum diameter. Denote $ X_h $ as piecewise linear finite element space
\begin{equation*}
	X_h=\{v_h\in H^1_0(\Omega): v_h|_\mathbf{T}\ {\rm is\ a \ linear\ function} \ \forall \mathbf{T}\in\mathcal{T}_h\}.
\end{equation*}

Then we define the $ L^2 $-orthogonal projection $ P_h:\ L^2(\Omega)\rightarrow X_h $ and the Ritz projection $ R_h:\ H^1_0(\Omega)\rightarrow X_h $ \cite{Bazhlekova2015}, respectively, by
\begin{equation*}
	\begin{aligned}
	 &(P_hu,v_h)=(u,v_h) \ ~\forall v_h\in X_h,\\
	 &(\nabla R_hu,\nabla v_h)=(\nabla u,\nabla v_h) \ ~\forall v_h\in X_h.
	\end{aligned}
\end{equation*}
And denote
\begin{equation}\label{Hh}
H_h(z)=\left((z^{\alpha_1}+a+A_h)(z^{\alpha_2}+a+A_h)-a^2\right)^{-1},
\end{equation}
and
\begin{equation}\label{Halpha}
\begin{aligned}
H_{\alpha_1,h}=H_h(z)(z^{\alpha_2}+a+A_h),\quad
H_{\alpha_2,h}=H_h(z)(z^{\alpha_1}+a+A_h).
\end{aligned}
\end{equation}

The $ L^2 $-orthogonal projection $ P_h $ and  the Ritz projection $ R_h $ have the following approximation properties.
\begin{lemma}[\cite{Bazhlekova2015}]\label{lemprojection}
	The projection $ P_h $ and $ R_h $ satisfy
	\begin{equation*}
	\begin{aligned}
		&\|P_hu-u\|_{L^2(\Omega)}+h\|\nabla(P_hu-u)\|_{L^2(\Omega)}\leq Ch^q\|u\|_{\dot{H}^{q}(\Omega)}\ for\ u\in \dot{H}^q(\Omega),\ q=1,2,\\
		&\|R_hu-u\|_{L^2(\Omega)}+h\|\nabla(R_hu-u)\|_{L^2(\Omega)}\leq Ch^q\|u\|_{\dot{H}^{q}(\Omega)}\ for\ u\in \dot{H}^q(\Omega),\ q=1,2.
	\end{aligned}
	\end{equation*}
\end{lemma}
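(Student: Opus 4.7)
These are classical approximation results for piecewise linear finite elements on shape regular quasi-uniform meshes, so my plan is to reduce both cases to the standard Lagrange interpolation estimate combined with quasi-optimality and duality. Since $\Omega$ is a convex polygonal domain, elliptic regularity gives an equivalence of norms on $\dot{H}^2(\Omega) = H^2(\Omega)\cap H^1_0(\Omega)$, and for $u\in \dot{H}^q(\Omega)$ the Lagrange interpolant $I_h u\in X_h$ satisfies the textbook bound
\begin{equation*}
\|u - I_h u\|_{L^2(\Omega)} + h\|\nabla(u - I_h u)\|_{L^2(\Omega)} \leq Ch^q \|u\|_{\dot{H}^q(\Omega)},\qquad q=1,2,
\end{equation*}
which will serve as the starting point.

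For the Ritz projection, Galerkin orthogonality $(\nabla(R_h u - u),\nabla v_h)=0$ for all $v_h\in X_h$ immediately yields the quasi-optimality
\begin{equation*}
\|\nabla(R_h u - u)\|_{L^2(\Omega)} \leq \inf_{v_h\in X_h}\|\nabla(u - v_h)\|_{L^2(\Omega)} \leq Ch^{q-1}\|u\|_{\dot{H}^q(\Omega)},
\end{equation*}
taking $v_h = I_h u$ when $q=2$ and $v_h = 0$ when $q=1$. To promote this to the $L^2$ estimate I would apply the Aubin--Nitsche duality trick: pick $\varphi\in \dot{H}^2(\Omega)$ solving $A\varphi = R_h u - u$, use elliptic regularity $\|\varphi\|_{\dot{H}^2(\Omega)}\leq C\|R_h u - u\|_{L^2(\Omega)}$, insert $I_h\varphi$ via Galerkin orthogonality, and cancel one factor of $\|R_h u - u\|_{L^2(\Omega)}$ to pick up the extra power of $h$.

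For the $L^2$-projection, the $L^2$ bound $\|P_h u - u\|_{L^2(\Omega)}\leq \|u - I_h u\|_{L^2(\Omega)}$ is immediate from best approximation in the $L^2$ inner product, so it reduces to the interpolation estimate above. The gradient bound is subtler because $P_h$ is not automatically $H^1$-stable; on a shape regular quasi-uniform mesh, however, one has $\|\nabla P_h v\|_{L^2(\Omega)}\leq C\|\nabla v\|_{L^2(\Omega)}$ for $v\in H^1_0(\Omega)$, which is proved by combining the $L^2$ estimate for $P_h$ with the inverse inequality. Writing $\nabla(P_h u - u) = \nabla\bigl(P_h(u - I_h u)\bigr) + \nabla(I_h u - u)$ and applying this stability on the first summand reduces the gradient bound to the Lagrange interpolation estimate once more.

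The main obstacle, and essentially the only delicate point, is the $H^1$-stability of $P_h$: it is precisely here that the quasi-uniformity hypothesis on $\mathcal{T}_h$ (beyond mere shape regularity) is used, via the global inverse inequality. Everything else in the argument is a direct application of quasi-optimality, duality, and interpolation, so once stability is in hand the proof assembles itself.
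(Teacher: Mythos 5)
The paper itself gives no proof of this lemma: it is quoted directly from \cite{Bazhlekova2015} as a standard finite element approximation result, so there is no in-paper argument to compare against. Your proof is the classical one and is essentially correct in structure: quasi-optimality plus Aubin--Nitsche duality for $R_h$ (with the convexity of the polygonal domain supplying the $H^2$ regularity needed for the dual problem), and best approximation in $L^2$ plus $H^1$-stability of $P_h$ (obtained from the inverse inequality on the quasi-uniform mesh) for the $L^2$-projection. The one technical slip is the use of the Lagrange interpolant $I_h$ for $u\in\dot{H}^1(\Omega)$: since the paper allows $d=1,2,3$, an $H^1$ function need not be continuous for $d\geq 2$ and $I_h u$ is then not defined pointwise, so the $q=1$ cases of your $P_h$ argument (both the step $\|P_h u-u\|_{L^2(\Omega)}\leq\|u-I_h u\|_{L^2(\Omega)}$ and the splitting $\nabla(P_h u-u)=\nabla\bigl(P_h(u-I_h u)\bigr)+\nabla(I_h u-u)$) require a quasi-interpolant of Cl\'ement or Scott--Zhang type in place of $I_h$. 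That substitution is harmless because the quasi-interpolant satisfies the same first-order estimates on $H^1_0(\Omega)$, and the $q=2$ cases are unaffected since $H^2(\Omega)\hookrightarrow C(\overline{\Omega})$ for $d\leq 3$; with this repair your argument is complete and matches the standard proof one would find in the cited literature.
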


Denote $(\cdot,\cdot)$ as the $L_2$ inner product. The semidiscrete Galerkin scheme for system \eqref{equrqtosol} reads: Find $ G_{1,h},G_{2,h}\in X_h$ such that
\begin{equation}\label{equequsysspatialsemischeme}
	\left\{\begin{aligned}
	&\left (\frac{\partial G_{1,h}}{\partial t},v_{1,h}\right )+a~_0D^{1-\alpha_1}_t(G_{1,h},v_{1,h})\\
    &\quad\quad+~_0D^{1-\alpha_1}_t(\nabla G_{1,h},\nabla v_{1,h})=a~_0D^{1-\alpha_2}_t(G_{2,h},v_{1,h})+(f_1,v_{1,h}),\\
	&\left (\frac{\partial G_{2,h}}{\partial t},v_{2,h}\right )+a~_0D^{1-\alpha_2}_t(G_{2,h},v_{2,h})\\
    &\quad\quad+~_0D^{1-\alpha_2}_t(\nabla G_{2,h},\nabla v_{2,h})=a~_0D^{1-\alpha_1}_t(G_{1,h},v_{2,h})+(f_2,v_{2,h}),\\
	\end{aligned}\right.
\end{equation}
where $v_{1,h},\ v_{2,h}\in X_h$. As for $G_{1,h}(0)$ and $ G_{2,h}(0) $, we take $G_{1,h}(0)=P_hG_{1,0}$, $G_{2,h}(0)=P_hG_{2,0}$ if $G_{1,h}, G_{2,h}\in L^2(\Omega)$ and $G_{1,h}(0)=R_hG_{1,0}$, $G_{2,h}(0)=R_hG_{2,0}$ if $G_{1,h}, G_{2,h}\in H^1_0(\Omega)\bigcap H^2(\Omega)$.

Define the discrete operator $A_h$: $X_h\rightarrow X_h$,
\begin{equation*}
	(A_h u_h,v_h)=(\nabla u_h,\nabla v_h) \quad\forall u_h,\ v_h\in X_h
\end{equation*}
and $ f_{1,h}(t)=P_hf_1(t)$, $ f_{2,h}(t)=P_hf_2(t)$; then  \eqref{equequsysspatialsemischeme}  can be rewritten as
\begin{equation}\label{equequsysspatialsemiAh}
	\begin{aligned}
		&\frac{\partial G_{1,h}}{\partial t}+a~_0D^{1-\alpha_1}_tG_{1,h}+~_0D^{1-\alpha_1}_tA_h G_{1,h}=a~_0D^{1-\alpha_2}_tG_{2,h}+f_{1,h},\\
		&\frac{\partial G_{2,h}}{\partial t}+a~_0D^{1-\alpha_2}_tG_{2,h}+~_0D^{1-\alpha_2}_tA_h G_{2,h}=a~_0D^{1-\alpha_1}_tG_{1,h}+f_{2,h}.\\
	\end{aligned}
\end{equation}
Taking the Laplace transform on \eqref{equequsysspatialsemiAh}, we get
\begin{equation}\label{equequsysspatialsemilapform}
\begin{aligned}
	&z\tilde{G}_{1,h}+az^{1-\alpha_1}\tilde{G}_{1,h}+z^{1-\alpha_1}A_h\tilde{G}_{1,h}=az^{1-\alpha_2}\tilde{G}_{2,h}+\tilde{f}_{1,h}+G_{1,h}(0),\\
	&z\tilde{G}_{2,h}+az^{1-\alpha_2}\tilde{G}_{2,h}+z^{1-\alpha_2}A_h\tilde{G}_{2,h}=az^{1-\alpha_1}\tilde{G}_{1,h}+\tilde{f}_{2,h}+G_{2,h}(0).
	\end{aligned}
\end{equation}

Then we introduce two lemmas, which will be used in the error estimates for space semidiscrete scheme.
\begin{lemma}\label{lemestofAhsoeasy}
When $ z\in \Sigma_{\theta,\kappa}$ and $\kappa=2|a|^{1/\alpha_1}$, we have
	\begin{equation*}
    \begin{aligned}
		\left \|\left(z^{\alpha_1}+a+A_h\right)^{-1}\right \|\leq C|z|^{-\alpha_1},\\
        \left \|\left(z^{\alpha_2}+a+A_h\right)^{-1}\right \|\leq C|z|^{-\alpha_2}.
    \end{aligned}
	\end{equation*}
\end{lemma}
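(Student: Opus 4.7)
The plan is to mimic the proofs of Lemmas \ref{lemestofa1easy} and \ref{lemestofa2easy} with the discrete operator $A_h$ replacing $A$. The crucial input is the discrete analogue of the resolvent estimate, namely
\begin{equation*}
\left\|(z+A_h)^{-1}\right\| \leq C|z|^{-1},\quad z\in\Sigma_\theta,
\end{equation*}
with $C$ independent of $h$. This is standard for $A_h$ defined via $(A_hu_h,v_h)=(\nabla u_h,\nabla v_h)$: $A_h$ is self-adjoint and positive definite on $X_h$, so its spectrum sits on the positive real axis and a sectorial bound analogous to the one for $A$ cited from \cite{Lubich1996} holds uniformly in $h$. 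I would state this at the outset and use it as a black box.

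Given this, the first bound is obtained as follows. For $z\in\Sigma_{\theta,\kappa}$ with $\kappa\geq 2|a|^{1/\alpha_1}$, I would first verify that $z^{\alpha_1}+a$ stays inside a sector $\Sigma_{\bar\theta}$ with $\pi/2<\bar\theta<\pi$ (depending only on $\theta$), exactly as argued in the proof of Lemma \ref{lemestofa1easy}: the condition $|z|^{\alpha_1}\geq 2|a|$ forces $|a|/|z^{\alpha_1}|\leq 1/2$, so the perturbation $a$ cannot move $z^{\alpha_1}$ out of a slightly wider sector. Substituting $w=z^{\alpha_1}+a$ into the discrete resolvent bound then gives
\begin{equation*}
\left\|(z^{\alpha_1}+a+A_h)^{-1}\right\| \leq C|z^{\alpha_1}+a|^{-1}.
\end{equation*}
Finally, the elementary inequality
\begin{equation*}
\frac{|z^{\alpha_1}+a|^{-1}}{|z|^{-\alpha_1}} \leq 2,
\end{equation*}
which follows from $|z|^{\alpha_1}\geq 2|a|$, yields the desired estimate $\|(z^{\alpha_1}+a+A_h)^{-1}\|\leq C|z|^{-\alpha_1}$.

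The second estimate is obtained by repeating the argument verbatim with $\alpha_1$ replaced by $\alpha_2$; the corresponding condition on $\kappa$ should be $\kappa\geq 2|a|^{1/\alpha_2}$, so in practice one takes $\kappa\geq\max(2|a|^{1/\alpha_1},2|a|^{1/\alpha_2})$ to obtain both bounds simultaneously.

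Honestly, there is no real obstacle here once the $h$-independent sectorial resolvent bound for $A_h$ is accepted; the whole argument is a direct transcription of Lemmas \ref{lemestofa1easy} and \ref{lemestofa2easy} into the discrete setting. The only point I would be careful to emphasize is the $h$-uniformity of the constant $C$, since all later semidiscrete error estimates will rely on that uniformity when this lemma is fed into the discrete analogues of Lemmas \ref{LemestofHz}--\ref{LemestofAHzalpha}.
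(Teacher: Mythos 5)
Your proposal is correct and follows exactly the route the paper intends: the paper's own proof is simply the remark that it is ``similar to the one of Lemma \ref{lemestofa1easy}'', and your transcription of that argument to $A_h$ via the $h$-uniform sectorial resolvent bound is precisely what is meant. Your side observation that the second estimate really requires $\kappa\geq 2|a|^{1/\alpha_2}$ as well (so that in practice $\kappa\geq\max(2|a|^{1/\alpha_1},2|a|^{1/\alpha_2})$) is a fair correction to the lemma's stated hypothesis.
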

%\begin{remark}
\begin{proof}
Its proof is similar to the one of Lemma \ref{lemestofa1easy}.
\end{proof}
%\end{remark} Lemma \ref{lemestofAhsoeasy}

Being similar to $H(z)$, $H_{\alpha_1}(z)$, and $H_{\alpha_2}(z)$, we have the following estimates of $H_h(z)$, $H_{\alpha_1,h}(z)$, and $H_{\alpha_2,h}(z)$.
\begin{lemma}\label{lemtheestimateofHhblabla}
	When $z\in\Sigma_{\theta,\kappa}$, $ \pi/2<\theta<\pi $, and $\kappa>\max\left (2|a|^{1/\alpha_1},2|a|^{1/\alpha_2}\right )$, there are the estimates of $H_h(z)$, $H_{\alpha_1,h}(z)$ and $H_{\alpha_2,h}(z)$,
	\begin{equation*}
	\begin{aligned}
	&\|H_h(z)\|\leq C|z|^{-\alpha_1-\alpha_2},\quad\|H_{\alpha_1,h}(z)\|\leq C|z|^{-\alpha_1},\quad\|H_{\alpha_2,h}(z)\|\leq C|z|^{-\alpha_2},\\
	&\|A_hH_h(z)\|\leq C\min\left (|z|^{-\alpha_1},|z|^{-\alpha_2}\right ),\quad\|A_hH_{\alpha_1,h}(z)\|\leq C,\quad\|A_hH_{\alpha_2,h}(z)\|\leq C,\\
	\end{aligned}
	\end{equation*}
	where $H_h$, $H_{\alpha_1,h}$ and $H_{\alpha_2,h}$ are defined by \eqref{Hh} and \eqref{Halpha}.
\end{lemma}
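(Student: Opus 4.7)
The plan is to recycle the arguments of Lemmas \ref{LemestofHz}--\ref{LemestofAHzalpha} verbatim, replacing the continuous Laplacian $A$ everywhere by the discrete one $A_h$. The only ingredient that must be supplied at the discrete level is a uniform resolvent estimate of the form $\|(z^{\alpha_i}+a+A_h)^{-1}\|\leq C|z|^{-\alpha_i}$, and this is precisely what Lemma \ref{lemestofAhsoeasy} furnishes (with the constant $C$ independent of $h$). Once the resolvent bound is in hand, the remainder of the argument is purely algebraic and passes through unchanged.

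Concretely, to obtain $\|H_h(z)\|\leq C|z|^{-\alpha_1-\alpha_2}$ I would set $\bigl((z^{\alpha_1}+a+A_h)(z^{\alpha_2}+a+A_h)-a^2\bigr)u_h=v_h$, rearrange into the fixed-point form
\begin{equation*}
u_h=\bigl((z^{\alpha_1}+a+A_h)(z^{\alpha_2}+a+A_h)\bigr)^{-1}v_h+a^2\bigl((z^{\alpha_1}+a+A_h)(z^{\alpha_2}+a+A_h)\bigr)^{-1}u_h,
\end{equation*}
take the $L^2(\Omega)$ norm, apply Lemma \ref{lemestofAhsoeasy} twice, and choose $\kappa$ large enough so that $Ca^2|z|^{-\alpha_1-\alpha_2}<1/2$ in order to absorb the $u_h$ term. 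The bounds on $H_{\alpha_1,h}(z)$ and $H_{\alpha_2,h}(z)$ follow in the same way, mirroring the proofs of Lemmas \ref{LemestofHzalpha1} and \ref{LemestofHzalpha2}.

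For the $A_h$-weighted estimates I would exploit the identities
\begin{equation*}
A_hH_h(z)=(z^{\alpha_i}+a+A_h)H_h(z)-(z^{\alpha_i}+a)H_h(z),\qquad i=1,2,
\end{equation*}
which reduce the problem to bounding the four operators $(z^{\alpha_i}+a+A_h)H_h(z)$ and $(z^{\alpha_i}+a)H_h(z)$. Each is treated exactly as in the proof of Lemma \ref{lemestofAHz}: the first is controlled by inverting one factor and running the same contraction argument (again made possible by taking $\kappa$ large), while the second is immediate from the bound on $H_h(z)$ together with $|z^{\alpha_i}+a|\leq C|z|^{\alpha_i}$. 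Choosing whichever of $i=1$ or $i=2$ gives the better exponent yields the $\min(|z|^{-\alpha_1},|z|^{-\alpha_2})$ bound. The estimates for $A_hH_{\alpha_i,h}(z)$ follow in the same manner as Lemma \ref{LemestofAHzalpha}.

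There is no real obstacle here: the uniform (in $h$) discrete resolvent bound of Lemma \ref{lemestofAhsoeasy} is the only nontrivial ingredient, and once it is accepted every step of the continuous proofs transfers. The main thing to be careful about is that the threshold $\kappa$ required to make the contraction argument work depends only on $a$, $\alpha_1$, $\alpha_2$ and the constants of Lemma \ref{lemestofAhsoeasy}, so the resulting estimates are genuinely uniform in $h$, which is what is needed for the subsequent error analysis.
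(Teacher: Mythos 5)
Your proposal is correct and follows exactly the route the paper intends: the paper gives no explicit proof of this lemma, merely asserting it is "similar" to the continuous-operator Lemmas \ref{LemestofHz}--\ref{LemestofAHzalpha} with the discrete resolvent bound of Lemma \ref{lemestofAhsoeasy} replacing the continuous one, which is precisely the argument you have written out. Your added remark that the threshold $\kappa$ and the resulting constants are uniform in $h$ is a point the paper leaves implicit, and it is the right thing to check.
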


 For homogeneous problem \eqref{equrqtosol}, we give the error estimates for space semidiscrete scheme with smooth initial value.
\begin{theorem}\label{thmsmoothdatasemi}
	Let $G_{1}$, $G_{2}$ and $G_{1,h}$, $G_{2,h}$ be the solutions of the systems \eqref{equrqtosol} and \eqref{equequsysspatialsemiAh}, respectively, with $f_1=0$, $f_2=0$ and $G_{1,h}(0)=R_hG_{1,0}$, $G_{2,h}(0)=R_hG_{2,0}$. Then
	\begin{equation*}
	\begin{aligned}
	&\|G_1-G_{1,h}\|_{L^2(\Omega)}\leq Ch^2\left (\|G_{1,0}\|_{\dot{H}^2(\Omega)}+\|G_{2,0}\|_{\dot{H}^2(\Omega)}\right ),\\
	&\|G_2-G_{2,h}\|_{L^2(\Omega)}\leq Ch^2\left (\|G_{1,0}\|_{\dot{H}^2(\Omega)}+\|G_{2,0}\|_{\dot{H}^2(\Omega)}\right ).
	\end{aligned}
	\end{equation*}
\end{theorem}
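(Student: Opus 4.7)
The plan is to turn the error into a single inverse Laplace contour integral and then reduce everything to resolvent comparison estimates of the form $H_{\alpha_1}(z)-H_{\alpha_1,h}(z)R_h$ and $H(z)-H_h(z)R_h$. Subtracting the discrete Laplace representation obtained from \eqref{equequsysspatialsemilapform} (with $\tilde f_{1,h}=\tilde f_{2,h}=0$, $G_{i,h}(0)=R_hG_{i,0}$) from \eqref{equsolformofLaplace}, the first component of the error satisfies
\begin{equation*}
\tilde G_1-\tilde G_{1,h}=z^{\alpha_1-1}\bigl[(H_{\alpha_1}(z)-H_{\alpha_1,h}(z)R_h)G_{1,0}+a(H(z)-H_h(z)R_h)G_{2,0}\bigr],
\end{equation*}
and similarly for the second component. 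Applying the inverse Laplace transform along the contour $\Gamma_{\theta,\kappa}$ with $\kappa=1/t$ expresses $G_1(t)-G_{1,h}(t)$ as a Bromwich-type integral whose integrand is built from these operator differences.

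The core step is then to prove the two bounds
\begin{equation*}
\|(H_{\alpha_1}(z)-H_{\alpha_1,h}(z)R_h)v\|_{L^2(\Omega)}\le Ch^2|z|^{-\alpha_1}\|Av\|_{L^2(\Omega)},\qquad \|(H(z)-H_h(z)R_h)v\|_{L^2(\Omega)}\le Ch^2|z|^{-\alpha_1-\alpha_2}\|Av\|_{L^2(\Omega)},
\end{equation*}
for $z\in\Sigma_{\theta,\kappa}$ and $v\in\dot H^2(\Omega)$. My route to these is to first handle a \emph{single} shifted resolvent $E(z)=(z^\beta+a+A)^{-1}$ versus $E_h(z)=(z^\beta+a+A_h)^{-1}$ for $\beta\in\{\alpha_1,\alpha_2\}$: setting $w=E(z)v$, $w_h=E_h(z)P_hv$, the Ritz identity $A_hR_h=P_hA$ yields $R_hw-w_h=(z^\beta+a)E_h(z)(R_h-P_h)w$, which combined with Lemma \ref{lemestofAhsoeasy} and Lemma \ref{lemprojection} gives a clean $O(h^2)$ bound with the expected $|z|^{-\beta}$ decay. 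The desired estimates for $H(z)-H_h(z)R_h$ and $H_{\alpha_i}(z)-H_{\alpha_i,h}(z)R_h$ are then obtained algebraically by writing $H$ and $H_{\alpha_i}$ as compositions of the single resolvents via the defining identities \eqref{equdefHz}, \eqref{H12}, \eqref{Hh}, \eqref{Halpha}, inserting a telescoping sum, and applying the single-resolvent estimate together with the uniform bounds supplied by Lemmas \ref{LemestofHz}--\ref{LemestofAHzalpha} and Lemma \ref{lemtheestimateofHhblabla}.

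With these pieces in hand, substituting into the contour integral gives
\begin{equation*}
\|G_1(t)-G_{1,h}(t)\|_{L^2(\Omega)}\le Ch^2\int_{\Gamma_{\theta,\kappa}}e^{\Re(z)t}|z|^{-1}|dz|\,\bigl(\|AG_{1,0}\|_{L^2(\Omega)}+\|AG_{2,0}\|_{L^2(\Omega)}\bigr),
\end{equation*}
and the standard splitting of $\Gamma_{\theta,\kappa}$ into the two rays and the arc (as already used in the proofs of Theorems \ref{thmhomoregularity} and \ref{thminhomoregularity}) bounds the $z$-integral by an absolute constant, yielding the claimed estimate; the bound for $G_2-G_{2,h}$ follows by the same reasoning after swapping the roles of $\alpha_1$ and $\alpha_2$. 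The main obstacle I anticipate is organising the algebraic expansion of $H(z)-H_h(z)R_h$ and $H_{\alpha_i}(z)-H_{\alpha_i,h}(z)R_h$: because $H(z)$ is the inverse of a $2\times 2$ polynomial in $A$ with the off-diagonal $-a^2$ coupling, the telescoping between continuous and discrete operators must be done so that every resulting term carries exactly one factor of $(R_h-I)$ or $(P_h-I)$ (to furnish the $h^2$) while leaving enough bounded resolvent factors to absorb the polynomial growth in $|z|$.
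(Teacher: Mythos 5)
Your argument is correct, but it takes a genuinely different route from the paper's. The paper splits $G_i-G_{i,h}=(G_i-R_hG_i)+(R_hG_i-G_{i,h})=\varUpsilon_i+\varrho_i$, bounds $\varUpsilon_i$ by Lemma \ref{lemprojection} combined with the regularity estimate of Theorem \ref{thmhomoregularity}, and shows that $(\varrho_1,\varrho_2)$ solves the \emph{discrete} coupled system driven by $(R_h-P_h)$ applied to explicit Laplace-domain expressions of the continuous solution; the $h^2$ there comes from $\|(R_h-P_h)w\|_{L^2(\Omega)}\le Ch^2\|Aw\|_{L^2(\Omega)}$, so only the individual bounds on $H_h,H_{\alpha_i,h}$ (Lemma \ref{lemtheestimateofHhblabla}) and on $AH,AH_{\alpha_i}$ (Lemmas \ref{lemestofAHz}--\ref{LemestofAHzalpha}) are needed, and the operators $H$ and $H_h$ are never compared directly. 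You instead estimate $H-H_hR_h$ and $H_{\alpha_i}-H_{\alpha_i,h}R_h$ head-on, which is more modular (the same two bounds with $P_h$ in place of $R_h$ and $\|v\|_{L^2(\Omega)}$ in place of $\|Av\|_{L^2(\Omega)}$ would deliver Theorem \ref{thmnonsmoothdatasemi}, and you bypass Theorem \ref{thmhomoregularity} altogether), at the price of the telescoping you flag for the $-a^2$ coupling. That step does close: writing $H=E_1E_2+a^2E_1E_2H$ with $E_i=(z^{\alpha_i}+a+A)^{-1}$, and likewise for $H_h$, the unknown difference $H-H_hP_h$ reappears on the right multiplied by $Ca^2|z|^{-\alpha_1-\alpha_2}$ and is absorbed for $\kappa$ large --- the same smallness condition already used in Lemmas \ref{LemestofHz}--\ref{LemestofHzalpha2}; the analogous identity $H_{\alpha_1}=E_1+a^2E_1E_2H_{\alpha_1}$ handles $H_{\alpha_1}-H_{\alpha_1,h}R_h$, and the residual $H_h(P_h-R_h)$ is controlled by Lemmas \ref{lemprojection} and \ref{lemtheestimateofHhblabla}. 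Two details you should make explicit when writing this up: the stated decays $|z|^{-\alpha_1}\|Av\|_{L^2(\Omega)}$ and $|z|^{-\alpha_1-\alpha_2}\|Av\|_{L^2(\Omega)}$ require commuting $A$ through the continuous resolvents, i.e.\ $\|AE(z)v\|_{L^2(\Omega)}=\|E(z)Av\|_{L^2(\Omega)}\le C|z|^{-\beta}\|Av\|_{L^2(\Omega)}$ for $v\in\dot{H}^2(\Omega)$, without which the single-resolvent comparison only yields $Ch^2\|v\|_{L^2(\Omega)}$ with no decay in $|z|$; and $\kappa$ must be taken as $\max(1/t,\kappa_0)$ with $\kappa_0$ fixed by the lemmas rather than exactly $1/t$, which is harmless for the final contour estimate since $\kappa t\le\max(1,\kappa_0 T)$.
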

\begin{proof}
	For given $t$, we take $\kappa\geq1/t$ and ensure that $\kappa$ is large enough to satisfy the conditions in Lemmas \ref{lemestofa1easy}--\ref{LemestofAHzalpha} and Lemma \ref{lemtheestimateofHhblabla}. For $ G_1 $ and $G_2$, we have
	\begin{equation*}
	\begin{aligned}
	\begin{aligned}
	G_1-G_{1,h}=&(R_hG_1-G_{1,h})+(G_1-R_hG_1)=\varrho_1+\varUpsilon_1,\\
	G_2-G_{2,h}=&(R_hG_2-G_{2,h})+(G_2-R_hG_2)=\varrho_2+\varUpsilon_2.
	\end{aligned}
	\end{aligned}
	\end{equation*}
	Lemma \ref{lemprojection} and Theorem \ref{thmhomoregularity} lead to
	\begin{equation}\label{equRhGG}
	\begin{aligned}  % +\|\nabla\varUpsilon_1\|_{L^2(\Omega)}
	&\|\varUpsilon_1\|_{L^2(\Omega)}\leq Ch^2\|G_1(t)\|_{\dot{H}^2(\Omega)}\leq Ch^2(\|G_{1,0}\|_{\dot{H}^2(\Omega)}+\|G_{2,0}\|_{\dot{H}^2(\Omega)}),\\
%+\|\nabla\varUpsilon_2\|_{L^2(\Omega)}
	&\|\varUpsilon_2\|_{L^2(\Omega)}\leq Ch^2\|G_2(t)\|_{\dot{H}^2(\Omega)}\leq Ch^2(\|G_{1,0}\|_{\dot{H}^2(\Omega)}+\|G_{2,0}\|_{\dot{H}^2(\Omega)}).
	\end{aligned}
	\end{equation}
	Applying the operator $P_h$ on both sides of the first formula in \eqref{equequinlapform}, we have
	\begin{equation}\label{equPhform1}
\begin{aligned}	& zP_h\tilde{G}_1+az^{1-\alpha_1}P_h\tilde{G}_1+z^{1-\alpha_1}P_hA\tilde{G}_1=az^{1-\alpha_2}P_h\tilde{G}_2+P_hG_{1,0},\\
& 	zP_h\tilde{G}_2+az^{1-\alpha_2}P_h\tilde{G}_2+z^{1-\alpha_2}P_hA\tilde{G}_2=az^{1-\alpha_1}P_h\tilde{G}_1+P_hG_{2,0}.	
\end{aligned}
\end{equation}
	Subtracting \eqref{equPhform1} from  \eqref{equequsysspatialsemilapform} and using the fact $ A_hR_h=P_hA $, we have
	\begin{equation*}
	\begin{aligned}
	&z\tilde{\varrho}_1+az^{1-\alpha_1}\tilde{\varrho}_1+z^{1-\alpha_1}A_h\tilde{\varrho}_1\\
	&\qquad\qquad=az^{1-\alpha_2}\tilde{\varrho}_2+(R_h-P_h)(z\tilde{G}_1+az^{1-\alpha_1}\tilde{G}_1-az^{1-\alpha_2}\tilde{G}_2-G_{1,0}),\\
	&z\tilde{\varrho}_2+az^{1-\alpha_2}\tilde{\varrho}_2+z^{1-\alpha_2}A_h\tilde{\varrho}_2\\
	&\qquad\qquad =az^{1-\alpha_1}\tilde{\varrho}_1+(R_h-P_h)(z\tilde{G}_2+az^{1-\alpha_2}\tilde{G}_2-az^{1-\alpha_1}\tilde{G}_1-G_{2,0}).
	\end{aligned}
	\end{equation*}
	Thus we have
	\begin{equation*}
	\begin{aligned}
	\tilde{\varrho}_1=&z^{\alpha_1-1}\left(H_{\alpha_1,h}(z)(R_h-P_h)(z\tilde{G}_1+az^{1-\alpha_1}\tilde{G}_1-az^{1-\alpha_2}\tilde{G}_2-G_{1,0})\right.\\
	&\left.+aH_h(z)(R_h-P_h)(z\tilde{G}_2+az^{1-\alpha_2}\tilde{G}_2-az^{1-\alpha_1}\tilde{G}_1-G_{2,0})\right).
	\end{aligned}
	\end{equation*}
	By \eqref{equsolformofLaplace}, we have
	\begin{equation}\label{eqp1}
	\begin{aligned}
	\tilde{\varrho}_1=&H_{\alpha_1,h}(z)(R_h-P_h)\left(H_{\alpha_1}(z)z^{2\alpha_1-1}G_{1,0}+aH(z)z^{2\alpha_1-1}G_{2,0}\right.\\
	&+a H_{\alpha_1}(z)z^{\alpha_1-1}G_{1,0}+a^2H(z)z^{\alpha_1-1}G_{2,0}\\
	&\left.-(a^2H(z)z^{\alpha_1-1}G_{1,0}+aH_{\alpha_2}(z)z^{\alpha_1-1}G_{2,0})-z^{\alpha_1-1}G_{1,0}\right)\\
	&+aH_h(z)(R_h-P_h)\left(aH(z)z^{\alpha_1+\alpha_2-1}G_{1,0}+H_{\alpha_2}(z)z^{\alpha_1+\alpha_2-1}G_{2,0}\right.\\
	&+a^2H(z)z^{\alpha_1-1}G_{1,0}+aH_{\alpha_2}(z)z^{\alpha_1-1}G_{2,0}\\
	&\left.-(aH_{\alpha_1}(z)z^{\alpha_1-1}G_{1,0}+a^2H(z)z^{\alpha_1-1}G_{2,0})-z^{\alpha_1-1}G_{2,0}\right).
	\end{aligned}
	\end{equation}
	Taking the inverse Laplace transform and $L_2$ norm on both sides of \eqref{eqp1}, we obtain
	\begin{equation*}
	\begin{aligned}
	&\|\varrho_1\|_{L^2(\Omega)}\\
	\leq& \Big \|\frac{1}{2\pi i}\int_{\Gamma_{\theta,\kappa}}e^{zt}H_{\alpha_1,h}(z)(R_h-P_h)(H_{\alpha_1}(z)z^{2\alpha_1-1}+aH_{\alpha_1}(z)z^{\alpha_1-1}-a^2H(z)z^{\alpha_1-1}
%\right.
\\
&
-z^{\alpha_1-1})G_{1,0}\Big \|_{L^2(\Omega)}+\Big \|\frac{1}{2\pi i}\int_{\Gamma_{\theta,\kappa}}e^{zt}H_{\alpha_1,h}(z)(R_h-P_h)(a H(z)z^{2\alpha_1-1}
\\
&
+a^2 H(z)z^{\alpha_1-1}-aH_{\alpha_2}(z)z^{\alpha_1-1})G_{2,0}\Big \|_{L^2(\Omega)}+\Big \|\frac{1}{2\pi i}\int_{\Gamma_{\theta,\kappa}}e^{zt}aH_{h}(z)(R_h-P_h)
\\
&\cdot(aH(z)z^{\alpha_1+\alpha_2-1}+a^2H(z)z^{\alpha_1-1}-aH_{\alpha_1}(z)z^{\alpha_1-1})G_{1,0}\Big \|_{L^2(\Omega)}\\
	&+\Big \|\frac{1}{2\pi i}\int_{\Gamma_{\theta,\kappa}}e^{zt}aH_{h}(z)(R_h-P_h)(H_{\alpha_2}(z)z^{\alpha_1+\alpha_2-1}+H_{\alpha_2}(z)az^{\alpha_1-1}
\\
	&
-a^2H(z)z^{\alpha_1-1}-z^{\alpha_1-1})G_{2,0}\Big \|_{L^2(\Omega)}.\\
	\end{aligned}
	\end{equation*}
	By Lemmas \ref{LemestofHz}, \ref{LemestofHzalpha1}, \ref{LemestofHzalpha2},  \ref{lemestofAhsoeasy}, and \ref{lemtheestimateofHhblabla}, there exists
	\begin{equation}\label{equRhG1G1h}
	\begin{aligned}
	&\|\varrho_1\|_{L^2(\Omega)}\\\leq
	&Ch^2\int_{\Gamma_{\theta,\kappa}}e^{\Re(z)t}(|z|^{-1}+|z|^{-1-\alpha_1}+|z|^{-\alpha_1-\alpha_2-1}+|z|^{-1})|dz|\|G_{1,0}\|_{\dot{H}^2(\Omega)}\\
	&+Ch^2\int_{\Gamma_{\theta,\kappa}}e^{\Re(z)t}(|z|^{-\alpha_2-1}+|z|^{-\alpha_1-\alpha_2-1}+|z|^{-\alpha_2-1})|dz|\|G_{2,0}\|_{\dot{H}^2(\Omega)}\\
	&+Ch^2\int_{\Gamma_{\theta,\kappa}}e^{\Re(z)t}(|z|^{-\alpha_1-\alpha_2-1}+|z|^{-\alpha_1-2\alpha_2-1}+|z|^{-\alpha_1-\alpha_2-1})|dz|\|G_{1,0}\|_{\dot{H}^2(\Omega)}\\	&+Ch^2\int_{\Gamma_{\theta,\kappa}}e^{\Re(z)t}(|z|^{-\alpha_2-1}+|z|^{-2\alpha_2-1}+|z|^{-\alpha_1-2\alpha_2-1}+|z|^{-\alpha_2-1})|dz|
\\
&
\cdot\|G_{2,0}\|_{\dot{H}^2(\Omega)}\\
	&\leq Ch^2\left (\|G_{1,0}\|_{\dot{H}^2(\Omega)}+\|G_{2,0}\|_{\dot{H}^2(\Omega)}\right),
	\end{aligned}
	\end{equation}
	where we use the fact $t\leq T$, and $\Re(z)$ stands for the real part of $z$.
	Similarly we have
	\begin{equation}\label{equRhG2G2h}
	\|\varrho_2\|_{L^2(\Omega)}\leq Ch^2\left (\|G_{1,0}\|_{\dot{H}^2(\Omega)}+\|G_{2,0}\|_{\dot{H}^2(\Omega)}\right).
	\end{equation}
	According to \eqref{equRhGG}, we get
	\begin{equation*}
	\begin{aligned}
	&\|G_1-G_{1,h}\|_{L^2(\Omega)}\leq Ch^2\left (\|G_{1,0}\|_{\dot{H}^2(\Omega)}+\|G_{2,0}\|_{\dot{H}^2(\Omega)}\right ),\\
	&\|G_2-G_{2,h}\|_{L^2(\Omega)}\leq Ch^2\left (\|G_{1,0}\|_{\dot{H}^2(\Omega)}+\|G_{2,0}\|_{\dot{H}^2(\Omega)}\right ).
	\end{aligned}
	\end{equation*}
\end{proof}

For homogeneous problem \eqref{equrqtosol} with nonsmooth initial value, we have the following error estimate.
%Then we give the following error estimate for space semidiscrete scheme with nonsmooth initial value for homogeneous problem \eqref{equrqtosol}.
\begin{theorem}\label{thmnonsmoothdatasemi}
	Let $G_{1}$, $G_{2}$ and $G_{1,h}$, $G_{2,h}$ be the solutions of the systems \eqref{equrqtosol} and \eqref{equequsysspatialsemiAh}, respectively, with $f_1=0$, $f_2=0$ and $G_{1,h}(0)=P_hG_{1,0}$, $G_{2,h}(0)=P_hG_{2,0}$. Then
	\begin{equation*}
	\begin{aligned}
	\|G_1-G_{1,h}\|_{L^2(\Omega)}\leq Ch^2(t^{-\alpha_1}\|G_{1,0}\|_{L^2(\Omega)}+\|G_{2,0}\|_{L^2(\Omega)}),\\
	\|G_2-G_{2,h}\|_{L^2(\Omega)}\leq Ch^2(\|G_{1,0}\|_{L^2(\Omega)}+t^{-\alpha_2}\|G_{2,0}\|_{L^2(\Omega)}).
	\end{aligned}
	\end{equation*}
\end{theorem}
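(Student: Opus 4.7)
My proof plan imitates the argument of Theorem \ref{thmsmoothdatasemi} but uses the nonsmooth-data regularity from Theorem \ref{thmhomoregularity}. Decompose
\begin{equation*}
G_i-G_{i,h}=(G_i-R_hG_i)+(R_hG_i-G_{i,h})=\varUpsilon_i+\varrho_i,\quad i=1,2.
\end{equation*}
The first term is immediate: Lemma \ref{lemprojection} yields $\|\varUpsilon_i\|_{L^2(\Omega)}\le Ch^2\|G_i(t)\|_{\dot{H}^2(\Omega)}$, after which the $\nu=1$ case of Theorem \ref{thmhomoregularity} supplies $Ch^2(t^{-\alpha_1}\|G_{1,0}\|_{L^2(\Omega)}+\|G_{2,0}\|_{L^2(\Omega)})$ for $i=1$ and the analogous bound for $i=2$.

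For $\varrho_i$ I would repeat the Laplace-transform derivation of Theorem \ref{thmsmoothdatasemi}. The crucial simplification is that, because $G_{i,h}(0)=P_hG_{i,0}$, the initial-data term on the discrete side exactly matches $P_h$ acting on the exact equation, so the $(R_h-P_h)G_{i,0}$ contribution present in the smooth-data source vanishes here. What remains is the $2\times 2$ system
\begin{equation*}
\begin{aligned}
&z\tilde\varrho_1+az^{1-\alpha_1}\tilde\varrho_1+z^{1-\alpha_1}A_h\tilde\varrho_1-az^{1-\alpha_2}\tilde\varrho_2=(R_h-P_h)(z\tilde G_1+az^{1-\alpha_1}\tilde G_1-az^{1-\alpha_2}\tilde G_2),\\
&z\tilde\varrho_2+az^{1-\alpha_2}\tilde\varrho_2+z^{1-\alpha_2}A_h\tilde\varrho_2-az^{1-\alpha_1}\tilde\varrho_1=(R_h-P_h)(z\tilde G_2+az^{1-\alpha_2}\tilde G_2-az^{1-\alpha_1}\tilde G_1),
\end{aligned}
\end{equation*}
which I would invert exactly as in Theorem \ref{thmsmoothdatasemi} to obtain
\begin{equation*}
\tilde\varrho_1=z^{\alpha_1-1}H_{\alpha_1,h}(z)F_1+az^{\alpha_1-1}H_h(z)F_2,
\end{equation*}
with $F_1,F_2$ the two right-hand sides above and a symmetric formula for $\tilde\varrho_2$. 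Substituting the Laplace representations of $\tilde G_1,\tilde G_2$ from \eqref{equsolformofLaplace} (with $f_1=f_2=0$) then expresses $F_1,F_2$ explicitly in terms of $G_{1,0}$ and $G_{2,0}$.

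The final step is to take the inverse Laplace transform along $\Gamma_{\theta,\kappa}$ with $\kappa=1/t$ and estimate term by term, combining the triangle-inequality consequence $\|(R_h-P_h)v\|_{L^2(\Omega)}\le Ch^2\|Av\|_{L^2(\Omega)}$ of Lemma \ref{lemprojection} with Lemmas \ref{LemestofHz}--\ref{LemestofAHzalpha} and Lemma \ref{lemtheestimateofHhblabla}. The dominant $G_{1,0}$ channel in $\tilde\varrho_1$ comes from the $H_{\alpha_1}(z)z^{\alpha_1}G_{1,0}$ summand of $F_1$: after $A$-differentiation it contributes a factor $|z|^{\alpha_1}$, which combined with $\|z^{\alpha_1-1}H_{\alpha_1,h}(z)\|\le C|z|^{-1}$ produces an integrand of order $Ch^2|z|^{\alpha_1-1}\|G_{1,0}\|_{L^2(\Omega)}$, integrating to exactly the singular factor $Ch^2t^{-\alpha_1}\|G_{1,0}\|_{L^2(\Omega)}$. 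All remaining summands give bounded integrals after invoking $t\le T$, yielding the $Ch^2\|G_{2,0}\|_{L^2(\Omega)}$ piece. A mirror computation with the subscripts $1$ and $2$ exchanged handles $\varrho_2$ and produces the $t^{-\alpha_2}$ factor attached to $\|G_{2,0}\|_{L^2(\Omega)}$. The main obstacle is purely bookkeeping: $F_1$ and $F_2$ each expand into several summands mixing $H$, $H_{\alpha_1}$, $H_{\alpha_2}$, $A$, and powers of $z$, and in each summand involving $AH$ or $A_hH_h$ one must choose the correct side of the estimate $\min(|z|^{-\alpha_1},|z|^{-\alpha_2})$ so that the resulting power of $|z|$ is at most $-1$ (for bounded integrals) or is precisely $\alpha_i-1$ along the single channel that is allowed to produce the $t^{-\alpha_i}$ singularity; no technical idea beyond those already used in Theorem \ref{thmsmoothdatasemi} is needed.
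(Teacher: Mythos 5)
Your argument is correct, and it reaches the same estimates, but it is organized around a different splitting than the paper's. The paper compares $G_{i,h}$ with the $L^2$-projection, setting $\varrho_i=P_hG_i-G_{i,h}$ and $\varUpsilon_i=G_i-P_hG_i$; with that choice the entire projection mismatch collapses into the single Laplacian term, so the error equation reads $z\tilde{\varrho}_1+az^{1-\alpha_1}\tilde{\varrho}_1+z^{1-\alpha_1}A_h\tilde{\varrho}_1=az^{1-\alpha_2}\tilde{\varrho}_2-z^{1-\alpha_1}A_h(R_h-P_h)\tilde{G}_1$, with no cross term in $\tilde G_2$ and with the smoothing factor $A_h$ already attached to $(R_h-P_h)$, so that $\|H_{\alpha_1,h}A_h\|\leq C$ plus $\|(R_h-P_h)v\|\leq Ch^2\|Av\|$ finishes the estimate with very little expansion. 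You instead keep the Ritz-projection splitting $\varrho_i=R_hG_i-G_{i,h}$ from Theorem \ref{thmsmoothdatasemi}; your observation that the $(R_h-P_h)G_{i,0}$ source term cancels because $G_{i,h}(0)=P_hG_{i,0}$ is right (the $P_hG_{i,0}$ terms on both sides of the subtracted equations cancel exactly), and your error equation and the resulting representation of $\tilde\varrho_1$ are correct. The price is the longer bookkeeping you acknowledge: $F_1$ and $F_2$ expand into six summands each, and one must check every channel. I verified that each channel closes — in particular the singular channel $z^{\alpha_1-1}H_{\alpha_1,h}(z)(R_h-P_h)H_{\alpha_1}(z)z^{\alpha_1}G_{1,0}$ gives $Ch^2|z|^{\alpha_1-1}\|G_{1,0}\|_{L^2(\Omega)}$ and hence $Ch^2t^{-\alpha_1}\|G_{1,0}\|_{L^2(\Omega)}$, exactly matching the integrand the paper obtains from $\|H_{\alpha_1,h}A_h\|\cdot h^2\|AH_{\alpha_1}(z)z^{\alpha_1-1}G_{1,0}\|$ — so the two routes produce identical final integrals. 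What the paper's choice buys is brevity (a two-term source per equation instead of six); what yours buys is uniformity with the smooth-data proof, at the cost of the $\min(|z|^{-\alpha_1},|z|^{-\alpha_2})$ case analysis being spread over more terms.
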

\begin{proof}
	For given $t$, we take $\kappa\geq1/t$ and ensure that $\kappa$ is large enough to satisfy the conditions in Lemmas \ref{lemestofa1easy}--\ref{LemestofAHzalpha} and Lemma \ref{lemtheestimateofHhblabla}. For $ G_1 $ and $G_2$, there are
	\begin{equation*}
	\begin{aligned}
	\begin{aligned}
	G_1-G_{1,h}=&(P_hG_1-G_{1,h})+(G_1-P_hG_1)=\varrho_1+\varUpsilon_1,\\
	G_2-G_{2,h}=&(P_hG_2-G_{2,h})+(G_2-P_hG_2)=\varrho_2+\varUpsilon_2.
	\end{aligned}
	\end{aligned}
	\end{equation*}
	Lemma \ref{lemprojection} and Theorem \ref{thmhomoregularity} lead to
	\begin{equation}\label{equPhGG}
	\begin{aligned} % +\|\nabla\varUpsilon_1\|_{L^2(\Omega)}
% +\|\nabla\varUpsilon_2\|_{L^2(\Omega)}
	&\|\varUpsilon_1\|_{L^2(\Omega)}\leq Ch^2\|G_1(t)\|_{\dot{H}^2(\Omega)}\leq Ch^2(t^{-\alpha_1}\|G_{1,0}\|_{L^2(\Omega)}+\|G_{2,0}\|_{L^2(\Omega)}),\\
	&\|\varUpsilon_2\|_{L^2(\Omega)}\leq Ch^2\|G_2(t)\|_{\dot{H}^2(\Omega)}\leq Ch^2(\|G_{1,0}\|_{L^2(\Omega)}+t^{-\alpha_2}\|G_{2,0}\|_{L^2(\Omega)}).
	\end{aligned}
	\end{equation}
	%	Applying the operator $P_h$ on both sides of the first formula of the Eq. \eqref{equequinlapform}, we have
	%	\begin{equation*}
	%		zP_h\tilde{G}_1+az^{1-\alpha_1}P_h\tilde{G}_1+z^{1-\alpha_1}P_hA\tilde{G}_1=az^{1-\alpha_2}P_h\tilde{G}_2+P_hG_{1,0}.
	%	\end{equation*}
	Subtracting \eqref{equPhform1} from \eqref{equequsysspatialsemilapform} and using the fact $ A_hR_h=P_hA $, we have
	\begin{equation*}
	\begin{aligned}
	&z\tilde{\varrho}_1+az^{1-\alpha_1}\tilde{\varrho}_1+z^{1-\alpha_1}A_h\tilde{\varrho}_1=az^{1-\alpha_2}\tilde{\varrho}_2-z^{1-\alpha_1}A_h(R_h-P_h)\tilde{G}_1,\\
	&z\tilde{\varrho}_2+az^{1-\alpha_2}\tilde{\varrho}_2+z^{1-\alpha_2}A_h\tilde{\varrho}_2=az^{1-\alpha_1}\tilde{\varrho}_1-z^{1-\alpha_2}A_h(R_h-P_h)\tilde{G}_2,
	\end{aligned}
	\end{equation*}
which leads to
	\begin{equation*}
	\tilde{\varrho}_1=z^{\alpha_1-1}\left(H_{\alpha_1,h}(z)A_h(P_h-R_h)z^{1-\alpha_1}\tilde{G}_1+aH_h(z)A_h(P_h-R_h)z^{1-\alpha_2}\tilde{G}_2\right).
	\end{equation*}
Further combining \eqref{equsolformofLaplace} results in
	\begin{equation*}
	\begin{aligned}
	\tilde{\varrho}_1=&H_{\alpha_1,h}A_h(P_h-R_h)\left (H_{\alpha_1}(z)z^{\alpha_1-1}G_{1,0}+aH(z)z^{\alpha_1-1}G_{2,0}\right )\\
	&+az^{\alpha_1-\alpha_2}H_h(z)A_h(P_h-R_h)\left (aH(z)z^{\alpha_2-1}G_{1,0}+H_{\alpha_2}(z)z^{\alpha_2-1}G_{2,0}\right ).
	\end{aligned}
	\end{equation*}
	Taking the inverse Laplace transform and using Lemmas \ref{lemprojection},  \ref{lemestofAhsoeasy}, and \ref{lemtheestimateofHhblabla}, we have
	\begin{equation*}
	\begin{aligned}
	&\|\varrho_1\|_{L^2(\Omega)}\\\leq&\Big \|\frac{1}{2\pi i}\int_{\Gamma_{\theta,\kappa}}e^{zt}H_{\alpha_1,h}A_h(P_h-R_h)\left (H_{\alpha_1}(z)z^{\alpha_1-1}G_{1,0}+aH(z)z^{\alpha_1-1}G_{2,0}\right )dz\Big \|_{L^2(\Omega)}\\
	&+\Big \|\frac{1}{2\pi i}\int_{\Gamma_{\theta,\kappa}}e^{zt}az^{\alpha_1-\alpha_2}H_h(z)A_h(P_h-R_h)\left (aH(z)z^{\alpha_2-1}G_{1,0}+H_{\alpha_2}(z) \right.
\\
&
\cdot \left. z^{\alpha_2-1}G_{2,0}\right )dz\Big \|_{L^2(\Omega)}\\
	\leq&Ch^2\int_{\Gamma_{\theta,\kappa}}e^{\Re(z)t}\left (\|AH_{\alpha_1}(z)z^{\alpha_1-1}G_{1,0}\|_{L^2(\Omega)}+\|aAH(z)z^{\alpha_1-1}G_{2,0}\|_{L^2(\Omega)}\right )|dz|\\
	&+Ch^2\int_{\Gamma_{\theta,\kappa}}e^{\Re(z)t}|z|^{-\alpha_2}\left (\|AH(z)z^{\alpha_2-1}G_{1,0}\|_{L^2(\Omega)} \right.
\\
	&
\left.+\|AH_{\alpha_2}(z)z^{\alpha_2-1}G_{2,0}\|_{L^2(\Omega)}\right )|dz|.
	\end{aligned}
	\end{equation*}
	Combining Lemmas \ref{lemestofAHz} and \ref{LemestofAHzalpha} lead to
	\begin{equation*}
	\begin{aligned}
	\|\varrho_1\|_{L^2(\Omega)}\leq&Ch^2\int_{\Gamma_{\theta,\kappa}}e^{\Re(z)t}\left (|z|^{\alpha_1-1}\|G_{1,0}\|_{L^2(\Omega)}+|z|^{-1}\|G_{2,0}\|_{L^2(\Omega)}\right )|dz|\\
	&+Ch^2\int_{\Gamma_{\theta,\kappa}}e^{\Re(z)t}|z|^{-\alpha_2}\left (|z|^{-1}\|G_{1,0}\|_{L^2(\Omega)}+|z|^{\alpha_2-1}\|G_{2,0}\|_{L^2(\Omega)}\right )|dz|\\
	\leq&Ch^2\left (t^{-\alpha_1}\|G_{1,0}\|_{L^2(\Omega)}+\|G_{2,0}\|_{L^2(\Omega)}\right ).
	\end{aligned}
	\end{equation*}
	Similarly, there also exists
	\begin{equation*}
	\|\varrho_2\|_{L^2(\Omega)}\leq Ch^2(\|G_{1,0}\|_{L^2(\Omega)}+t^{-\alpha_2}\|G_{2,0}\|_{L^2(\Omega)})	.
	\end{equation*}
	According to \eqref{equPhGG}, we obtain
	\begin{equation*}
	\begin{aligned}
	\|G_1-G_{1,h}\|_{L^2(\Omega)}\leq Ch^2(t^{-\alpha_1}\|G_{1,0}\|_{L^2(\Omega)}+\|G_{2,0}\|_{L^2(\Omega)}),\\
	\|G_2-G_{2,h}\|_{L^2(\Omega)}\leq Ch^2(\|G_{1,0}\|_{L^2(\Omega)}+t^{-\alpha_2}\|G_{2,0}\|_{L^2(\Omega)}).
	\end{aligned}
	\end{equation*}
\end{proof}
Lastly, we provide error estimate of space semidiscrete scheme for inhomogeneous problem \eqref{equrqtosol} with vanishing initial value.
\begin{theorem}\label{thmvanisheddatasemi}
	Let $G_{1}$, $G_{2}$ and $G_{1,h}$, $G_{2,h}$ be the solutions of the systems \eqref{equrqtosol} and \eqref{equequsysspatialsemiAh}, respectively, with $G_{1,h}(0)=0$, $G_{2,h}(0)=0$ and $ f_1,f_2\in L^{\infty}(0,T,{L^2(\Omega)}) $. Then
	\begin{equation*}
	\begin{aligned}
	\|G_1-G_{1,h}\|_{L^2(\Omega)}\leq Ch^2\left (\int_0^t (t-s)^{-\alpha_1}\|f_1(s)\|_{L^2(\Omega)}ds+\int_0^t\|f_2(s)\|_{L^2(\Omega)}ds\right ),\\
	\|G_2-G_{2,h}\|_{L^2(\Omega)}\leq Ch^2\left (\int_0^t\|f_1(s)\|_{L^2(\Omega)}ds+\int_0^t(t-s)^{-\alpha_2}\|f_2(s)\|_{L^2(\Omega)}ds\right ).
	\end{aligned}
	\end{equation*}
\end{theorem}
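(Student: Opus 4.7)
The plan is to mirror the strategy of Theorems \ref{thmsmoothdatasemi} and \ref{thmnonsmoothdatasemi}, exploiting the fact that the coupled homogeneous linear algebra in the Laplace domain is identical; only the driving data change from an initial value to a source. First, decompose
\begin{equation*}
G_1-G_{1,h}=(P_hG_1-G_{1,h})+(G_1-P_hG_1)=\varrho_1+\varUpsilon_1,
\end{equation*}
and analogously for $G_2-G_{2,h}$. The projection piece $\varUpsilon_1$ is immediate: Lemma \ref{lemprojection} gives $\|\varUpsilon_1\|_{L^2(\Omega)}\leq Ch^2\|G_1(t)\|_{\dot{H}^2(\Omega)}$, and Theorem \ref{thminhomoregularity} with $\nu=1$ directly yields the required convolution bound with kernels $(t-s)^{-\alpha_1}$ for $f_1$ and $1$ for $f_2$.

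For $\varrho_1$, I would apply $P_h$ to the Laplace-transformed continuous system \eqref{equequinlapform} (with $G_{1,0}=G_{2,0}=0$), subtract the semidiscrete Laplace system \eqref{equequsysspatialsemilapform}, and use $A_hR_h=P_hA$ together with $\tilde f_{i,h}=P_h\tilde f_i$ to obtain exactly the coupled system appearing in the proof of Theorem \ref{thmnonsmoothdatasemi}:
\begin{equation*}
\begin{aligned}
&z\tilde{\varrho}_1+az^{1-\alpha_1}\tilde{\varrho}_1+z^{1-\alpha_1}A_h\tilde{\varrho}_1=az^{1-\alpha_2}\tilde{\varrho}_2-z^{1-\alpha_1}A_h(R_h-P_h)\tilde{G}_1,\\
&z\tilde{\varrho}_2+az^{1-\alpha_2}\tilde{\varrho}_2+z^{1-\alpha_2}A_h\tilde{\varrho}_2=az^{1-\alpha_1}\tilde{\varrho}_1-z^{1-\alpha_2}A_h(R_h-P_h)\tilde{G}_2.
\end{aligned}
\end{equation*}
Solving this $2\times 2$ system via $H_{\alpha_1,h}(z)$ and $H_h(z)$ gives
\begin{equation*}
\tilde{\varrho}_1=H_{\alpha_1,h}(z)A_h(P_h-R_h)\tilde{G}_1+az^{\alpha_1-\alpha_2}H_h(z)A_h(P_h-R_h)\tilde{G}_2,
\end{equation*}
and I then substitute the inhomogeneous Laplace-domain formulas from \eqref{equsolformofLaplace} (with $G_{1,0}=G_{2,0}=0$) to express $\tilde{G}_1,\tilde{G}_2$ as linear combinations of $\tilde{f}_1,\tilde{f}_2$ with kernels built from $H,H_{\alpha_1},H_{\alpha_2}$.

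Next, I would invert the Laplace transform by the Bromwich contour $\Gamma_{\theta,\kappa}$ and take $L^2$ norms, exactly as in Theorems \ref{thmnonsmoothdatasemi} and \ref{thminhomoregularity}. For each product kernel I use commutativity of $A_h$ with $H_{\alpha_i,h}(z)$ and the factorization $\|A_h H_{\alpha_i,h}\|\leq C$, $\|A_h H_h(z)\|\leq C\min(|z|^{-\alpha_1},|z|^{-\alpha_2})$ from Lemma \ref{lemtheestimateofHhblabla}, together with $\|(P_h-R_h)w\|_{L^2(\Omega)}\leq Ch^2\|Aw\|_{L^2(\Omega)}$ from Lemma \ref{lemprojection} and the $\|AH\|,\|AH_{\alpha_i}\|$ bounds of Lemmas \ref{lemestofAHz}--\ref{LemestofAHzalpha}. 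The dominant term produces $|z|^{\alpha_1-1}h^2$ under the contour, which upon inversion gives the kernel $Ch^2(t-s)^{-\alpha_1}$ acting on $f_1$, while all remaining terms are controlled by $|z|^{-1}h^2$ (or stronger), yielding constant kernels paired with $f_2$. Using the Laplace convolution rule $\mathcal{L}^{-1}\{\tilde{u}\tilde{v}\}=u\ast v$ then assembles
\begin{equation*}
\|\varrho_1\|_{L^2(\Omega)}\leq Ch^2\!\left(\!\int_0^t(t-s)^{-\alpha_1}\|f_1(s)\|_{L^2(\Omega)}ds+\int_0^t\|f_2(s)\|_{L^2(\Omega)}ds\right)\!,
\end{equation*}
and adding the $\varUpsilon_1$ bound gives the first inequality. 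The second inequality for $G_2-G_{2,h}$ follows by interchanging roles of $\alpha_1,\alpha_2$, using $\min(|z|^{-\alpha_1},|z|^{-\alpha_2})\leq|z|^{-\alpha_2}$ when needed.

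The main obstacle is the careful bookkeeping of the $|z|$ powers: after multiplying the four operator-valued factors in each kernel and the prefactor $z^{\alpha_1-1}$ or $az^{\alpha_1-\alpha_2}$, one must verify that every term either produces exactly the integrable singularity corresponding to a $(t-s)^{-\alpha_i}$ convolution kernel or is strictly subdominant (bounded kernel). Selecting the uniform contour radius $\kappa\geq 1/t$ (large enough for Lemmas \ref{lemestofa1easy}--\ref{LemestofAHzalpha} and \ref{lemtheestimateofHhblabla}) and using $t\leq T$ to absorb the $T$-dependent factors, as done in the earlier theorems, closes the argument.
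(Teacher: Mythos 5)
Your proposal is correct and is essentially the proof the paper intends: the paper only remarks that the argument is ``similar to the one of Theorem \ref{thmnonsmoothdatasemi}'', and you carry out exactly that adaptation --- the $\varrho+\varUpsilon$ splitting with $P_h$, the identical Laplace-domain error system (the source terms cancel since $\tilde f_{i,h}=P_h\tilde f_i$), substitution of the inhomogeneous representation \eqref{equsolformofLaplace}, and the contour estimates with the correct $|z|^{\alpha_1-1}$ dominant power yielding the $(t-s)^{-\alpha_1}$ convolution kernel. The bookkeeping of powers you flag as the main obstacle indeed works out as you describe, so nothing is missing.
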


\begin{proof}
	Its proof is similar to the one of Theorem \ref{thmnonsmoothdatasemi}.
\end{proof}

\section{Time discretization and error analysis}
In this section, we use the convolution quadrature to discretize the time fractional derivatives and perform the error analysis for the fully discrete scheme, in which the backward Euler method is used to get the first-order scheme for classical time derivative.
First, let the time step size $\tau=T/L$, $L\in\mathbb{N}$, $t_i=i\tau$, $i=0,1,\ldots,L$ and $0=t_0<t_1<\cdots<t_L=T$. Taking $\delta(\zeta)=(1-\zeta)$ and using convolution quadrature for the system \eqref{equequsysspatialsemischeme}, we have the fully discrete scheme
\begin{equation}\label{equfulldis}
	\left \{\begin{aligned}
		&\frac{G^n_{1,h}-G^{n-1}_{1,h}}{\tau}+a\sum_{i=0}^{n-1}d^{1-\alpha_1}_iG^{n-i}_{1,h}+
		\sum_{i=0}^{n-1}d^{1-\alpha_1}_iA_h G^{n-i}_{1,h}=a\sum_{i=0}^{n-1}d^{1-\alpha_2}_iG^{n-i}_{2,h}+f^n_{1,h},\\
		&\frac{G^n_{2,h}-G^{n-1}_{2,h}}{\tau}+a\sum_{i=0}^{n-1}d^{1-\alpha_2}_iG^{n-i}_{2,h}+
		\sum_{i=0}^{n-1}d^{1-\alpha_2}_iA_h G^{n-i}_{2,h}=a\sum_{i=0}^{n-1}d^{1-\alpha_1}_iG^{n-i}_{1,h}+f^n_{2,h},\\
		&G^0_{1,h}=G_{1,h}(0),\\
		&G^0_{2,h}=G_{2,h}(0),
	\end{aligned}\right .
\end{equation}
where
\begin{equation}\label{equweightdalpha}
	\sum_{i=0}^{\infty}d^\alpha_i\zeta^i=\left (\frac{\delta(\zeta)}{\tau}\right )^{\alpha},\quad 0<\alpha<1,
\end{equation}
and $ G^n_{1,h} $, $G^n_{2,h}$ are the numerical solutions of  $ G_{1} $, $G_{2}$ at time $t_n$ and $f^n_{1,h}=P_hf_1(t_n)$,   $f^n_{2,h}=P_hf_2(t_n)$.
\subsection{Error estimates for the homogeneous problem}
Here, we consider the error estimates when $f_1(t)=0$ and $f_2(t)=0$. To get the solutions of the system \eqref{equfulldis}, multiplying $\zeta^n$ and summing from $1$ to $\infty$ for the both sides of the first two equations in (\ref{equfulldis}) lead to
\begin{equation*}
\begin{aligned}
		&\sum_{n=1}^{\infty}\frac{\zeta^n G^n_{1,h}-\zeta^n G^{n-1}_{1,h}}{\tau}+a\sum_{n=1}^{\infty}\sum_{i=0}^{n-1}d^{1-\alpha_1}_i\zeta^nG^{n-i}_{1,h}
		+\sum_{n=1}^{\infty}\sum_{i=0}^{n-1}d^{1-\alpha_1}_i\zeta^nA_h G^{n-i}_{1,h}\\
		&\quad\quad\quad\quad\quad\quad\quad\quad\quad\quad\quad\quad\quad\quad\quad=a\sum_{n=1}^{\infty}\sum_{i=0}^{n-1}d^{1-\alpha_2}_i\zeta^nG^{n-i}_{2,h},\\
		&\sum_{n=1}^{\infty}\frac{\zeta^n G^n_{2,h}-\zeta^n G^{n-1}_{2,h}}{\tau}+a\sum_{n=1}^{\infty}\sum_{i=0}^{n-1}d^{1-\alpha_2}_i\zeta^nG^{n-i}_{2,h}+
		\sum_{n=1}^{\infty}\sum_{i=0}^{n-1}d^{1-\alpha_2}_i\zeta^nA_h G^{n-i}_{2,h}\\
		&\quad\quad\quad\quad\quad\quad\quad\quad\quad\quad\quad\quad\quad\quad\quad=a\sum_{n=1}^{\infty}\sum_{i=0}^{n-1}d^{1-\alpha_1}_i\zeta^nG^{n-i}_{1,h}.\\
	\end{aligned}
\end{equation*}
According to \eqref{equweightdalpha}, we have
\begin{equation*}
\begin{aligned}
		&\left (\frac{1-\zeta}{\tau}\right )\sum_{i=1}^{\infty}G^i_{1,h}\zeta^i+a\left (\frac{1-\zeta}{\tau}\right )^{1-\alpha_1}\sum_{i=1}^{\infty}G^i_{1,h}\zeta^i+\left (\frac{1-\zeta}{\tau}\right )^{1-\alpha_1}A_h\sum_{i=1}^{\infty}G^i_{1,h}\zeta^i\\=&a\left (\frac{1-\zeta}{\tau}\right )^{1-\alpha_2}\sum_{i=1}^{\infty}G^i_{2,h}\zeta^i+\frac{\zeta G_{1,h}(0)}{\tau},\\
		&\left (\frac{1-\zeta}{\tau}\right )\sum_{i=1}^{\infty}G^i_{2,h}\zeta^i+a\left (\frac{1-\zeta}{\tau}\right )^{1-\alpha_2}\sum_{i=1}^{\infty}G^i_{2,h}\zeta^i+\left (\frac{1-\zeta}{\tau}\right )^{1-\alpha_2}A_h\sum_{i=1}^{\infty}G^i_{2,h}\zeta^i\\=&a\left (\frac{1-\zeta}{\tau}\right )^{1-\alpha_1}\sum_{i=1}^{\infty}G^i_{1,h}\zeta^i+\frac{\zeta G_{2,h}(0)}{\tau},
	\end{aligned}
\end{equation*}
which results in, after simple calculations,
%\begin{equation}\label{equnumsollapformtoestdiff}
%\begin{aligned}
%		\sum_{i=1}^{\infty}G^i_{1,h}\zeta^i=&\frac{\zeta}{\tau}\left (\frac{1-\zeta}{\tau}\right )^{\alpha_1+\alpha_2-2}H_h\left(\frac{1-\zeta}{\tau}\right)\\
%		&\times\left (\left(\frac{1-\zeta}{\tau}\right)^{1-\alpha_2}\left (\left(\frac{1-\zeta}{\tau}\right)^{\alpha_2}+a+A_h\right )G_{1,h}(0)+a\left(\frac{1-\zeta}{\tau}\right)^{1-\alpha_2}G_{2,h}(0)\right ),\\
%		\sum_{i=1}^{\infty}G^i_{2,h}\zeta^i=&\frac{\zeta}{\tau}\left (\frac{1-\zeta}{\tau}\right )^{\alpha_1+\alpha_2-2}H_h\left(\frac{1-\zeta}{\tau}\right)\\
%		&\times\left (a\left(\frac{1-\zeta}{\tau}\right)^{1-\alpha_1}G_{1,h}(0)+\left(\frac{1-\zeta}{\tau}\right)^{1-\alpha_1}\left (\left(\frac{1-\zeta}{\tau}\right)^{\alpha_1}+a+A_h\right )G_{2,h}(0)\right ),
%	\end{aligned}
%\end{equation}

\begin{equation}\label{equnumsollapformtoest}
\begin{aligned}
\sum_{i=1}^{\infty}G^i_{1,h}\zeta^i=&\frac{\zeta}{\tau}\left (H_{\alpha_1,h}\left(\frac{1-\zeta}{\tau}\right)\left (\frac{1-\zeta}{\tau}\right )^{\alpha_1-1}G_{1,h}(0)\right.\\
&\left.\qquad+aH_h\left(\frac{1-\zeta}{\tau}\right)\left (\frac{1-\zeta}{\tau}\right )^{\alpha_1-1}G_{2,h}(0)\right ),\\
\sum_{i=1}^{\infty}G^i_{2,h}\zeta^i=&\frac{\zeta}{\tau}\left (aH_h\left(\frac{1-\zeta}{\tau}\right)\left (\frac{1-\zeta}{\tau}\right )^{\alpha_2-1}G_{1,h}(0)\right.\\
&\left.\quad+H_{\alpha_2,h}\left(\frac{1-\zeta}{\tau}\right)\left (\frac{1-\zeta}{\tau}\right )^{\alpha_2-1}G_{2,h}(0)\right ),
\end{aligned}
\end{equation}
	where $H_h$, $H_{\alpha_1,h}$, and $H_{\alpha_2,h}$ are defined by \eqref{Hh} and \eqref{Halpha}.
	
Now we give the error estimates of the solutions of the systems \eqref{equequsysspatialsemiAh} and \eqref{equfulldis} when $f_1=0$ and $f_2=0$.
\begin{theorem}\label{thmhomfullest}
	Let $G_{1,h}$, $G_{2,h}$ and $G^n_{1,h}$, $G^n_{2,h}$ be, respectively, the solutions of the systems \eqref{equequsysspatialsemiAh} and \eqref{equfulldis} with $f_1=0$, $f_2=0$. Then
	\begin{equation*}
	\begin{aligned}
		\|G_{1,h}(t_n)-G^n_{1,h}\|_{L^2(\Omega)}\leq C\tau(t_n^{-1}\|G_{1,h}(0)\|_{L^2(\Omega)}+t_n^{\alpha_2-1}\|G_{2,h}(0)\|_{L^2(\Omega)}),\\
		\|G_{2,h}(t_n)-G^n_{2,h}\|_{L^2(\Omega)}\leq C\tau(t_n^{\alpha_1-1}\|G_{1,h}(0)\|_{L^2(\Omega)}+t_n^{-1}\|G_{2,h}(0)\|_{L^2(\Omega)}).
	\end{aligned}
	\end{equation*}
\end{theorem}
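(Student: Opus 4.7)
The plan is to compare the two solutions by expressing both as contour integrals and applying the standard convolution quadrature machinery, exploiting the resolvent bounds already established in Lemma \ref{lemtheestimateofHhblabla}.

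First I would rewrite the semidiscrete solution via the inverse Laplace transform along $\Gamma_{\theta,\kappa}$: from the Laplace representation implicit in \eqref{equequsysspatialsemilapform} (with $f_1=f_2=0$),
\begin{equation*}
G_{1,h}(t_n)=\frac{1}{2\pi i}\int_{\Gamma_{\theta,\kappa}}e^{zt_n}\bigl(H_{\alpha_1,h}(z)z^{\alpha_1-1}G_{1,h}(0)+aH_h(z)z^{\alpha_1-1}G_{2,h}(0)\bigr)\,dz,
\end{equation*}
and likewise for $G_{2,h}(t_n)$. For the fully discrete solution, I would start from the generating-function identity \eqref{equnumsollapformtoest}, apply Cauchy's formula on a small circle $|\zeta|=\rho<1$, and then change variables $\zeta=e^{-z\tau}$ to deform the contour onto a truncated segment of $\Gamma_{\theta,\kappa}$ with $|\Im z|\leq \pi/\tau$. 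This yields
\begin{equation*}
G^n_{1,h}=\frac{1}{2\pi i}\int_{\Gamma^\tau_{\theta,\kappa}}e^{zt_n}\bigl(H_{\alpha_1,h}(\delta_\tau(z))\delta_\tau(z)^{\alpha_1-1}G_{1,h}(0)+aH_h(\delta_\tau(z))\delta_\tau(z)^{\alpha_1-1}G_{2,h}(0)\bigr)\mu(z)\,dz,
\end{equation*}
where $\delta_\tau(z)=(1-e^{-z\tau})/\tau$ and $\mu(z)=z\tau e^{-z\tau}/(1-e^{-z\tau})$, with an analogous formula for $G^n_{2,h}$.

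Next I would subtract and split the resulting error into two kinds of terms: (i) the tail integral of the semidiscrete representation over $\Gamma_{\theta,\kappa}\setminus\Gamma^\tau_{\theta,\kappa}$ (i.e. $|\Im z|>\pi/\tau$), and (ii) the integral of the kernel difference
\begin{equation*}
H_{\alpha_1,h}(z)z^{\alpha_1-1}-H_{\alpha_1,h}(\delta_\tau(z))\delta_\tau(z)^{\alpha_1-1}\mu(z)
\end{equation*}
and its counterpart with $aH_h$, over the truncated contour. Taking $\kappa=1/t_n$, the tail piece is handled by the exponential decay $|e^{zt_n}|$ together with the bounds $\|H_{\alpha_1,h}(z)\|\le C|z|^{-\alpha_1}$ and $\|H_h(z)\|\le C|z|^{-\alpha_1-\alpha_2}$ from Lemma \ref{lemtheestimateofHhblabla}; this produces contributions of order $\tau\, t_n^{-1}\|G_{1,h}(0)\|_{L^2(\Omega)}$ and $\tau\, t_n^{\alpha_2-1}\|G_{2,h}(0)\|_{L^2(\Omega)}$ after using $|z|\geq\pi/\tau$ on the discarded part.

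For the kernel-difference piece I would invoke the classical backward-Euler convolution quadrature estimates $|\delta_\tau(z)-z|\le C\tau|z|^2$, $|\mu(z)-1|\le C\tau|z|$, and $|\delta_\tau(z)|\sim|z|$ on $\Gamma^\tau_{\theta,\kappa}$, combined with the analyticity in $z$ of $H_{\alpha_1,h}$ and $H_h$ on $\Sigma_{\theta,\kappa}$, so that both $\delta_\tau(z)$ and $z$ lie in a common sector where Lemma \ref{lemtheestimateofHhblabla} applies. A Taylor-expansion argument (or mean-value theorem in the resolvent identity) yields
\begin{equation*}
\|H_{\alpha_1,h}(z)z^{\alpha_1-1}-H_{\alpha_1,h}(\delta_\tau(z))\delta_\tau(z)^{\alpha_1-1}\mu(z)\|\le C\tau|z|^{-\alpha_1},
\end{equation*}
\begin{equation*}
\|H_h(z)z^{\alpha_1-1}-H_h(\delta_\tau(z))\delta_\tau(z)^{\alpha_1-1}\mu(z)\|\le C\tau|z|^{-\alpha_2-\alpha_1+\alpha_1}=C\tau|z|^{-\alpha_2}.
\end{equation*}
Plugging these into the contour integral and using $\int_{\Gamma_{\theta,\kappa}}e^{\Re(z)t_n}|z|^{-\beta}|dz|\le C t_n^{\beta-1}$ with $\kappa=1/t_n$ delivers exactly the stated bounds $C\tau(t_n^{-1}\|G_{1,h}(0)\|+t_n^{\alpha_2-1}\|G_{2,h}(0)\|)$, and symmetrically for $G^n_{2,h}$.

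The main obstacle I expect is the coupled structure: the perturbation estimates for $H_{\alpha_i,h}$ and $H_h$ between the true and discretized Laplace variable must be derived directly from the defining identities \eqref{Hh}--\eqref{Halpha} (rather than borrowed from the scalar subdiffusion literature), and one has to verify that the smallness condition $Ca^2|z|^{-\alpha_1-\alpha_2}<1/2$ used in Lemmas \ref{LemestofHz}--\ref{lemestofAHz} remains valid for $\delta_\tau(z)$ on the deformed contour. Once this uniformity is in place, the rest is a routine combination of the resolvent estimates and classical backward-Euler contour arguments.
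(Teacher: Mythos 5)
Your proposal follows essentially the same route as the paper's proof: represent the semidiscrete solution as an inverse Laplace contour integral, represent the fully discrete solution via the generating function \eqref{equnumsollapformtoest} with $\zeta=e^{-z\tau}$ deformed onto $\Gamma^{\tau}_{\theta,\kappa}$, and split the error into the truncated tail over $\Gamma_{\theta,\kappa}\backslash\Gamma^{\tau}_{\theta,\kappa}$ plus a kernel-perturbation term controlled by $\delta_\tau(z)=z+O(\tau z^2)$ and the resolvent bounds of Lemma \ref{lemtheestimateofHhblabla}. The only quibble is that your claimed perturbation bound $C\tau|z|^{-\alpha_1}$ for the $H_{\alpha_1,h}$-kernel is stronger than what the mean-value/Cauchy-derivative argument actually yields (the paper uses $\|\tfrac{d}{dz}(H_{\alpha_1,h}(z)z^{\alpha_1-1})\|\le C|z|^{-2}$, hence a difference of order $C\tau$ only), but this weaker, correct bound already produces the stated $t_n^{-1}$ factor, so the argument and the conclusion are unaffected.
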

	\begin{proof}
		We first consider the error estimates between $ G^n_{1,h} $ and $ G_{1,h}(t_n) $. By \eqref{equnumsollapformtoest}, for small $\xi_\tau=e^{-\tau(\kappa+1)}$, there is
		\begin{equation*}
			\begin{aligned}
			G^n_{1,h}=&\frac{1}{2\pi i\tau}\int_{|\zeta|=\xi_\tau}\zeta^{-n-1}\zeta\times\left (H_{\alpha_1,h}\left(\frac{1-\zeta}{\tau}\right)\left (\frac{1-\zeta}{\tau}\right )^{\alpha_1-1}G_{1,h}(0)\right.\\&\qquad\qquad\qquad\qquad\qquad\left.+aH_{h}\left(\frac{1-\zeta}{\tau}\right)\left (\frac{1-\zeta}{\tau}\right )^{\alpha_1-1}G_{2,h}(0)\right )d\zeta.
			\end{aligned}
		\end{equation*}
		Taking $\zeta=e^{-z\tau}$, we obtain
		\begin{equation*}
			\begin{aligned}
			G^n_{1,h}=\frac{1}{2\pi i}&\int_{\Gamma^\tau}e^{zt_n}e^{-z\tau}\times\left (H_{\alpha_1,h}\left(\frac{1-e^{-z\tau}}{\tau}\right)\left (\frac{1-e^{-z\tau}}{\tau}\right )^{\alpha_1-1}G_{1,h}(0)\right.\\&\qquad\qquad\qquad\left.+aH_{h}\left(\frac{1-e^{-z\tau}}{\tau}\right)\left (\frac{1-e^{-z\tau}}{\tau}\right )^{\alpha_1-1}G_{2,h}(0)\right )dz,
			\end{aligned}
		\end{equation*}
 where $\Gamma^\tau=\{z=\kappa+1+iy:y\in\mathbb{R}~{\rm and}~|y|\leq \pi/\tau\}$. Next we deform the contour $\Gamma^\tau$ to
$\Gamma^\tau_{\theta,\kappa}=\{z\in \mathbb{C}:\kappa\leq |z|\leq\frac{\pi}{\tau\sin(\theta)},|\arg z|=\theta\}\bigcup\{z\in \mathbb{C}:|z|=\kappa,|\arg z|\leq\theta\}$. Thus
\begin{equation}\label{equfulldissolG1}
\begin{aligned}
G^n_{1,h}=&\frac{1}{2\pi i}\int_{\Gamma^\tau_{\theta,\kappa}}e^{zt_n}e^{-z\tau}\times\left (H_{\alpha_1,h}\left(\frac{1-e^{-z\tau}}{\tau}\right)\left (\frac{1-e^{-z\tau}}{\tau}\right )^{\alpha_1-1}G_{1,h}(0)\right.\\&\left.\qquad\qquad\qquad\qquad+aH_{h}\left(\frac{1-e^{-z\tau}}{\tau}\right)\left (\frac{1-e^{-z\tau}}{\tau}\right )^{\alpha_1-1}G_{2,h}(0)\right )dz.
\end{aligned}
\end{equation}
In view of \eqref{equequsysspatialsemilapform}, $ f_1=0 $,  and $f_2=0$, there exists
\begin{equation*}
\tilde{G}_{1,h}=z^{\alpha_1-1}\left (H_{\alpha_1,h}(z)G_{1,h}(0)+aH_{h}(z)G_{2,h}(0)\right ).
\end{equation*}
So
\begin{equation}\label{eqG1}
	G_{1,h}(t)=\frac{1}{2\pi i}\int_{\Gamma_{\theta,\kappa}}e^{zt}\left (H_{\alpha_1,h}(z)z^{\alpha_1-1}G_{1,h}(0)+aH_h(z)z^{\alpha_1-1}G_{2,h}(0)\right )dz.
\end{equation}
Combining \eqref{equfulldissolG1} and \eqref{eqG1} leads to
\begin{equation*}
	\begin{aligned}
		&G_{1,h}(t_n)-G^n_{1,h}\\=&\frac{1}{2\pi i}\int_{\Gamma_{\theta,\kappa}\backslash\Gamma^{\tau}_{\theta,\kappa}}e^{zt_n} H_{\alpha_1,h}(z)z^{\alpha_1-1}G_{1,h}(0)dz+\frac{1}{2\pi i}\int_{\Gamma_{\theta,\kappa}\backslash\Gamma^{\tau}_{\theta,\kappa}}e^{zt_n}aH_h(z)z^{\alpha_1-1}\\
		& \cdot G_{2,h}(0)dz+\frac{1}{2\pi i}\int_{\Gamma^{\tau}_{\theta,\kappa}}e^{zt_n}\left (H_{\alpha_1,h}(z)z^{\alpha_1-1}-e^{-z\tau}H_{\alpha_1,h}\left(\frac{1-e^{-z\tau}}{\tau}\right) \right. \\
		&
		\left.
		\cdot\left(\frac{1-e^{-z\tau}}{\tau}\right)^{\alpha_1-1}\right )G_{1,h}(0)dz+\frac{a}{2\pi i}\int_{\Gamma^{\tau}_{\theta,\kappa}}e^{zt_n}\left(H_h(z)z^{\alpha_1-1}-e^{-z\tau}\left(\frac{1-e^{-z\tau}}{\tau}\right) \right.
		\\
		&
		\left. \cdot\left(\frac{1-e^{-z\tau}}{\tau}\right)^{\alpha_1-1}H_{h}\right)G_{2,h}(0)dz\\
		=&\uppercase\expandafter{\romannumeral1}+\uppercase\expandafter{\romannumeral2}+\uppercase\expandafter{\romannumeral3}+\uppercase\expandafter{\romannumeral4}.
	\end{aligned}
\end{equation*}
According to Lemma \ref{lemtheestimateofHhblabla}, we have
\begin{equation*}
\begin{aligned}
	\|\uppercase\expandafter{\romannumeral1}\|_{L^2(\Omega)}&\leq C\int_{\Gamma_{\theta,\kappa}\backslash\Gamma^{\tau}_{\theta,\kappa}}e^{-C|z|t_n}|z|^{\alpha_1-1} \|H_{\alpha_1,h}(z)\||dz|\|G_{1,h}(0)\|_{L^2(\Omega)}\\&\leq C t_n^{-1}\tau\|G_{1,h}(0)\|_{L^2(\Omega)}.
\end{aligned}
\end{equation*}
For $ \uppercase\expandafter{\romannumeral2} $, similarly, there exists
\begin{equation*}
\begin{aligned}
	\|\uppercase\expandafter{\romannumeral2}\|_{L^2(\Omega)}&\leq C\int_{\Gamma_{\theta,\kappa}\backslash\Gamma^\tau_{\theta,\kappa}}e^{-C|z|t_n}|z|^{\alpha_1-1}a\|H_h(z)\||dz|\|G_{2,h}(0)\|_{L^2(\Omega)}\\&\leq C t_n^{\alpha_2-1}\tau\|G_{2,h}(0)\|_{L^2(\Omega)}.
\end{aligned}
\end{equation*}
Next for $ \uppercase\expandafter{\romannumeral3} $ and $ \uppercase\expandafter{\romannumeral4} $, we obtain
\begin{equation*}
	\begin{aligned}
		\uppercase\expandafter{\romannumeral3}=&\frac{1}{2\pi i}\int_{\Gamma^{\tau}_{\theta,\kappa}}e^{zt_n}e^{-z\tau}\left (e^{z\tau}H_{\alpha_1,h}(z)z^{\alpha_1-1}-H_{\alpha_1,h}\left(\frac{1-e^{-z\tau}}{\tau}\right)\right.
		\\
		&	\left. \cdot\left(\frac{1-e^{-z\tau}}{\tau}\right)^{\alpha_1-1}\right )G_{1,h}(0)dz\\
		=
		&	
		\frac{1}{2\pi i}\int_{\Gamma^{\tau}_{\theta,\kappa}}e^{zt_n}e^{-z\tau}\left ( H_{\alpha_1,h}(z)z^{\alpha_1-1}-H_{\alpha_1,h}\left(\frac{1-e^{-z\tau}}{\tau}\right)\left(\frac{1-e^{-z\tau}}{\tau}\right)^{\alpha_1-1}\right )
		\\
		&\cdot G_{1,h}(0)dz+\frac{1}{2\pi i}\int_{\Gamma^{\tau}_{\theta,\kappa}}e^{zt_n}e^{-z\tau} (e^{z\tau}-1)H_{\alpha_1,h}(z)z^{\alpha_1-1}G_{1,h}(0)dz
		\\
		=&\uppercase\expandafter{\romannumeral3}_1+\uppercase\expandafter{\romannumeral3}_2
	\end{aligned}
\end{equation*}
and
\begin{equation*}
	\begin{aligned}
	\uppercase\expandafter{\romannumeral4}=&\frac{a}{2\pi i}\int_{\Gamma^{\tau}_{\theta,\kappa}}e^{zt_n}e^{-z\tau}\left(e^{z\tau}H_h(z)z^{\alpha_1-1}-H_{h}\left(\frac{1-e^{-z\tau}}{\tau}\right)\left(\frac{1-e^{-z\tau}}{\tau}\right)^{\alpha_1-1}\right)\\
	&
\cdot G_{2,h}(0)dz
\\
	=&
\frac{a}{2\pi i}\int_{\Gamma^{\tau}_{\theta,\kappa}}e^{zt_n}e^{-z\tau}\left(H_h(z)z^{\alpha_1-1}-H_{h}\left(\frac{1-e^{-z\tau}}{\tau}\right)\left(\frac{1-e^{-z\tau}}{\tau}\right)^{\alpha_1-1}\right)\\
	&\cdot G_{2,h}(0)dz+\frac{a}{2\pi i}\int_{\Gamma^{\tau}_{\theta,\kappa}}e^{zt_n}e^{-z\tau}(e^{z\tau}-1)H_h(z)z^{\alpha_1-1}G_{2,h}(0)dz=\uppercase\expandafter{\romannumeral4}_1+\uppercase\expandafter{\romannumeral4}_2.
	\end{aligned}
\end{equation*}
 Using $ \|\frac{d}{dz}(H_{\alpha_1,h}(z)z^{\alpha_1-1})\|\leq
  C|z|^{-2} $, $\|\frac{d}{dz}(H_{h}(z)z^{\alpha_1-1})\|\leq
  C|z|^{-\alpha_2-2}$, the mean value theorem, and the fact $\left(\frac{1-e^{-z\tau}}{\tau}\right)=z+O(\tau z^2)$, the estimates
\begin{equation*}
\left \|H_{\alpha_1,h}(z)z^{\alpha_1-1}-H_{\alpha_1,h}\left(\frac{1-e^{-z\tau}}{\tau}\right )\left(\frac{1-e^{-z\tau}}{\tau}\right)^{\alpha_1-1}\right \|\leq C\tau
\end{equation*}
and
\begin{equation*}
	\left \|H_h(z)z^{\alpha_1-1}-H_{h}\left(\frac{1-e^{-z\tau}}{\tau}\right )\left(\frac{1-e^{-z\tau}}{\tau}\right)^{\alpha_1-1}\right \|\leq C\tau|z|^{-\alpha_2}
\end{equation*}
can be obtained. Thus
\begin{equation*}
	\|\uppercase\expandafter{\romannumeral3}_1\|_{L^2(\Omega)}\leq C\tau\int_{\Gamma^{\tau}_{\theta,\kappa}}e^{-C|z|t_{n-1}}|dz|\|G_{1,h}(0)\|_{L^2(\Omega)}\leq C t_n^{-1}\tau\|G_{1,h}(0)\|_{L^2(\Omega)}
\end{equation*}
and
\begin{equation*}
\|\uppercase\expandafter{\romannumeral4}_1\|_{L^2(\Omega)}\leq C\tau\int_{\Gamma^{\tau}_{\theta,\kappa}}e^{-C|z|t_{n-1}}|z|^{-\alpha_2}|dz|\|G_{2,h}(0)\|_{L^2(\Omega)}\leq C t_n^{\alpha_2-1}\tau\|G_{2,h}(0)\|_{L^2(\Omega)}.
\end{equation*}
Using $|e^{z\tau}-1|\leq C\tau|z|$ and Lemma \ref{lemtheestimateofHhblabla}, we have
\begin{equation*}
	\begin{aligned}
	&\|\uppercase\expandafter{\romannumeral3}_2\|_{L^2(\Omega)}\leq C\tau\int_{\Gamma^{\tau}_{\theta,\kappa}}e^{-C|z|t_{n-1}} |dz|\|G_{1,h}(0)\|_{L^2(\Omega)}\leq Ct_n^{-1}\tau\|G_{1,h}(0)\|_{L^2(\Omega)},\\
	&\|\uppercase\expandafter{\romannumeral4}_2\|_{L^2(\Omega)}\leq C\tau\int_{\Gamma^{\tau}_{\theta,\kappa}}e^{-C|z|t_{n-1}} |z|^{-\alpha_2}|dz|\|G_{2,h}(0)\|_{L^2(\Omega)}\leq Ct_n^{\alpha_2-1}\tau\|G_{2,h}(0)\|_{L^2(\Omega)}.\\
	\end{aligned}
\end{equation*}
In summary,
\begin{equation*}
	\|G_{1,h}(t_n)-G^n_{1,h}\|_{L^2(\Omega)}\leq C\tau(t_n^{-1}\|G_{1,h}(0)\|_{L^2(\Omega)}+t_n^{\alpha_2-1}\|G_{2,h}(0)\|_{L^2(\Omega)}).
\end{equation*}
Analogously, we have
\begin{equation*}
\|G_{2,h}(t_n)-G^n_{2,h}\|_{L^2(\Omega)}\leq C\tau(t_n^{\alpha_1-1}\|G_{1,h}(0)\|_{L^2(\Omega)}+t_n^{-1}\|G_{2,h}(0)\|_{L^2(\Omega)}).
\end{equation*}
The proof has been completed.
\end{proof}
Combining Theorem \ref{thmsmoothdatasemi}, Theorem \ref{thmnonsmoothdatasemi}, and Theorem \ref{thmhomfullest}, we have the error estimates for homogeneous problem.
\begin{theorem}
	Let $G_{1}$, $G_{2}$ and $G^n_{1,h}$, $G^n_{2,h}$ be, respectively, the solutions of the systems \eqref{equrqtosol} and \eqref{equfulldis}  with $f_1=0$, $f_2=0$. Then we have estimates
	\begin{itemize}
		\item if $G_{1,0}$, $G_{2,0}\in \dot{H}^2(\Omega)$,
		\begin{equation*}
		\begin{aligned}
		\|G_{1}(t_n)-G^n_{1,h}\|_{L^2(\Omega)}\leq&Ch^2\left (\|G_{1,0}\|_{\dot{H}^2(\Omega)}+\|G_{2,0}\|_{\dot{H}^2(\Omega)}\right )\\ &+C\tau(t_n^{-1}\|G_{1,h}(0)\|_{L^2(\Omega)}+t_n^{\alpha_2-1}\|G_{2,h}(0)\|_{L^2(\Omega)}),\\
		\|G_{2}(t_n)-G^n_{2,h}\|_{L^2(\Omega)}\leq &Ch^2\left (\|G_{1,0}\|_{\dot{H}^2(\Omega)}+\|G_{2,0}\|_{\dot{H}^2(\Omega)}\right )\\
		&+C\tau(t_n^{\alpha_1-1}\|G_{1,h}(0)\|_{L^2(\Omega)}+t_n^{-1}\|G_{2,h}(0)\|_{L^2(\Omega)});
		\end{aligned}
		\end{equation*}
		\item if $G_{1,0}$, $G_{2,0}\in L^2(\Omega)$,
		\begin{equation*}
		\begin{aligned}
		\|G_{1}(t_n)-G^n_{1,h}\|_{L^2(\Omega)}\leq&Ch^2(t^{-\alpha_1}\|G_{1,0}\|_{L^2(\Omega)}+\|G_{2,0}\|_{L^2(\Omega)})\\ &+C\tau(t_n^{-1}\|G_{1,h}(0)\|_{L^2(\Omega)}+t_n^{\alpha_2-1}\|G_{2,h}(0)\|_{L^2(\Omega)}),\\
		\|G_{2}(t_n)-G^n_{2,h}\|_{L^2(\Omega)}\leq&Ch^2(\|G_{1,0}\|_{L^2(\Omega)}+t^{-\alpha_2}\|G_{2,0}\|_{L^2(\Omega)})\\
		&+ C\tau(t_n^{\alpha_1-1}\|G_{1,h}(0)\|_{L^2(\Omega)}+t_n^{-1}\|G_{2,h}(0)\|_{L^2(\Omega)}).
		\end{aligned}
		\end{equation*}
	\end{itemize}
	
\end{theorem}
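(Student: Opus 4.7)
The plan is to obtain the stated estimate as a straightforward consequence of the earlier semidiscrete and temporal error bounds combined via the triangle inequality. Specifically, for each $i=1,2$ I will split
\begin{equation*}
G_i(t_n)-G^n_{i,h}=\bigl(G_i(t_n)-G_{i,h}(t_n)\bigr)+\bigl(G_{i,h}(t_n)-G^n_{i,h}\bigr),
\end{equation*}
and apply the norm triangle inequality in $L^2(\Omega)$. The first piece is exactly the space semidiscretization error at time $t_n$, while the second is the time discretization error between the semidiscrete and the fully discrete solutions, which has already been bounded in Theorem \ref{thmhomfullest}.

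For the smooth data case $G_{1,0},G_{2,0}\in\dot{H}^2(\Omega)$, the initial data for the semidiscrete scheme is taken as the Ritz projection, so the first piece is bounded by Theorem \ref{thmsmoothdatasemi}, giving the $Ch^2\bigl(\|G_{1,0}\|_{\dot{H}^2(\Omega)}+\|G_{2,0}\|_{\dot{H}^2(\Omega)}\bigr)$ term. The second piece directly yields the $\tau$-term from Theorem \ref{thmhomfullest}. Summing the two inequalities and combining like terms gives the first bullet. For the nonsmooth case $G_{1,0},G_{2,0}\in L^2(\Omega)$, the initial data for the semidiscrete scheme is the $L^2$-projection, so the first piece is bounded by Theorem \ref{thmnonsmoothdatasemi}, which supplies the singular factors $t^{-\alpha_1}$ and $t^{-\alpha_2}$ in front of the initial values, while the temporal estimate from Theorem \ref{thmhomfullest} again provides the $\tau$-term; adding gives the second bullet.

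No new analytical machinery is needed: the whole argument is organizational. The only point to be mildly careful about is the notation $G_{i,h}(0)$ appearing in the temporal estimate, since it stands for either $R_hG_{i,0}$ or $P_hG_{i,0}$ depending on the case; in both situations the $L^2$-stability of $R_h$ and $P_h$ yields $\|G_{i,h}(0)\|_{L^2(\Omega)}\le C\|G_{i,0}\|_{L^2(\Omega)}$, so the right-hand side is controlled by the data norms appearing in the theorem statement, and the form displayed in the theorem (which still writes $G_{i,h}(0)$) is kept as is. No single step stands out as an obstacle; the entire proof is a one-line triangle-inequality argument together with invocations of Theorems \ref{thmsmoothdatasemi}, \ref{thmnonsmoothdatasemi}, and \ref{thmhomfullest}.
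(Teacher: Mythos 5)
Your proposal is correct and is exactly the paper's argument: the paper states this theorem immediately after the line ``Combining Theorem \ref{thmsmoothdatasemi}, Theorem \ref{thmnonsmoothdatasemi}, and Theorem \ref{thmhomfullest}'' and gives no further proof, relying on precisely the triangle-inequality splitting $G_i(t_n)-G^n_{i,h}=(G_i(t_n)-G_{i,h}(t_n))+(G_{i,h}(t_n)-G^n_{i,h})$ that you describe. Your additional remark on the $L^2$-stability of $R_h$ and $P_h$ for interpreting $\|G_{i,h}(0)\|_{L^2(\Omega)}$ is a sensible clarification but not a deviation from the paper's route.
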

\subsection{Error estimates for the inhomogeneous problem}
Now we consider the error estimates for the inhomogeneous problem with vanishing initial value.
%To get the solutions of the system \eqref{equfulldis},
Multiplying $\zeta^n$ and summing from $0$ to $\infty$ on both sides of the first two equations in \eqref{equfulldis} result in
\begin{equation*}
\begin{aligned}
&\sum_{n=1}^{\infty}\frac{\zeta^n G^n_{1,h}-\zeta^n G^{n-1}_{1,h}}{\tau}+a\sum_{n=1}^{\infty}\sum_{i=0}^{n-1}d^{1-\alpha_1}_i\zeta^nG^{n-i}_{1,h}\\
+&
\sum_{n=1}^{\infty}\sum_{i=0}^{n-1}d^{1-\alpha_1}_i\zeta^nA_h G^{n-i}_{1,h}
=a\sum_{n=1}^{\infty}\sum_{i=0}^{n-1}d^{1-\alpha_2}_i\zeta^nG^{n-i}_{2,h}+\sum_{n=1}^{\infty}\zeta^nf^n_{1,h},\\
&\sum_{n=1}^{\infty}\frac{\zeta^n G^n_{2,h}-\zeta^n G^{n-1}_{2,h}}{\tau}+a\sum_{n=1}^{\infty}\sum_{i=0}^{n-1}d^{1-\alpha_2}_i\zeta^nG^{n-i}_{2,h}\\+&
\sum_{n=1}^{\infty}\sum_{i=0}^{n-1}d^{1-\alpha_2}_i\zeta^nA_h G^{n-i}_{2,h}
=a\sum_{n=1}^{\infty}\sum_{i=0}^{n-1}d^{1-\alpha_1}_i\zeta^nG^{n-i}_{1,h}+\sum_{n=1}^{\infty}\zeta^nf^n_{2,h}.\\
\end{aligned}
\end{equation*}
Using the property of $d^{\alpha}_i$ in \eqref{equweightdalpha} and the vanishing initial value, we have
\begin{equation*}
\begin{aligned}
&\left (\frac{1-\zeta}{\tau}\right )\sum_{i=0}^{\infty}G^i_{1,h}\zeta^i+a\left (\frac{1-\zeta}{\tau}\right )^{1-\alpha_1}\sum_{i=0}^{\infty}G^i_{1,h}\zeta^i+\left (\frac{1-\zeta}{\tau}\right )^{1-\alpha_1}A_h\sum_{i=0}^{\infty}G^i_{1,h}\zeta^i\\=&a\left (\frac{1-\zeta}{\tau}\right )^{1-\alpha_2}\sum_{i=0}^{\infty}G^i_{2,h}\zeta^i+\sum_{i=1}^{\infty}\zeta^if^i_{1,h},\\
&\left (\frac{1-\zeta}{\tau}\right )\sum_{i=0}^{\infty}G^i_{2,h}\zeta^i+a\left (\frac{1-\zeta}{\tau}\right )^{1-\alpha_2}\sum_{i=0}^{\infty}G^i_{2,h}\zeta^i+\left (\frac{1-\zeta}{\tau}\right )^{1-\alpha_2}A_h\sum_{i=0}^{\infty}G^i_{2,h}\zeta^i\\=&a\left (\frac{1-\zeta}{\tau}\right )^{1-\alpha_1}\sum_{i=0}^{\infty}G^i_{1,h}\zeta^i+\sum_{i=1}^{\infty}\zeta^if^i_{2,h}.
\end{aligned}
\end{equation*}
Thus, simple calculation leads to
\begin{equation}\label{equnumsollapformtoestG00}
\begin{aligned}
\sum_{i=0}^{\infty}G^i_{1,h}\zeta^i=&H_{\alpha_1,h}\left(\frac{1-\zeta}{\tau}\right)\left (\frac{1-\zeta}{\tau}\right )^{\alpha_1-1}\sum_{i=1}^{\infty}\zeta^if^i_{1,h}\\
&+aH_h\left(\frac{1-\zeta}{\tau}\right)\left (\frac{1-\zeta}{\tau}\right )^{\alpha_1-1}\sum_{i=1}^{\infty}\zeta^if^i_{2,h},\\
\sum_{i=0}^{\infty}G^i_{2,h}\zeta^i=&aH_h\left(\frac{1-\zeta}{\tau}\right)\left (\frac{1-\zeta}{\tau}\right )^{\alpha_2-1}\sum_{i=1}^{\infty}\zeta^if^i_{1,h}\\
&+H_{\alpha_2,h}\left(\frac{1-\zeta}{\tau}\right)\left (\frac{1-\zeta}{\tau}\right )^{\alpha_2-1}\sum_{i=1}^{\infty}\zeta^if^i_{2,h}.
\end{aligned}
\end{equation}

Then we have the error estimates of the solutions of the systems \eqref{equequsysspatialsemiAh} and \eqref{equfulldis} when $G_{1,0}=0$, $G_{2,0}=0$.
\begin{theorem}\label{thminhomfullest}
	Let $G_{1,h}$, $G_{2,h}$ and $G^n_{1,h}$, $G^n_{2,h}$ be the solutions of the systems \eqref{equequsysspatialsemiAh} and \eqref{equfulldis} with $G_{1,0}=0$, $G_{2,0}=0$ and $\int_{0}^{t_n}\|f_1'(s)\|_{L^2(\Omega)}ds<\infty$, $\int_{0}^{t_n}\|f_2'(s)\|_{L^2(\Omega)}ds<\infty$.  Then
	\begin{equation*}
	\begin{aligned}
	\|G_{1,h}(t_n)-G^n_{1,h}\|_{L^2(\Omega)}\leq& C\tau \left(\int_{0}^{t_n}\|f_1'(s)\|_{L^2(\Omega)}ds+\|f_1(0)\|_{L^2(\Omega)}\right.\\&\left.+\int_{0}^{t_n}\|f_2'(s)\|_{L^2(\Omega)}ds+\|f_2(0)\|_{L^2(\Omega)}\right),\\
	\|G_{2,h}(t_n)-G^n_{2,h}\|_{L^2(\Omega)}\leq& C\tau \left(\int_{0}^{t_n}\|f_1'(s)\|_{L^2(\Omega)}dt+\|f_1(0)\|_{L^2(\Omega)}\right.\\
	&\left.+\int_{0}^{t_n}\|f_2'(s)\|_{L^2(\Omega)}ds+\|f_2(0)\|_{L^2(\Omega)}\right).
	\end{aligned}
	\end{equation*}
\end{theorem}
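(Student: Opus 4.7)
The plan is to mirror the strategy used for Theorem~\ref{thmhomfullest}, but with the initial-value terms replaced by data terms involving $\tilde{f}_{1,h}$ and $\tilde{f}_{2,h}$ (respectively, their generating functions). First, using \eqref{equnumsollapformtoestG00} together with the substitution $\zeta=e^{-z\tau}$ and Cauchy's integral formula, I would represent $G^n_{1,h}$ as a contour integral over $\Gamma^\tau_{\theta,\kappa}$ of the discrete symbol $H_{\alpha_1,h}\!\bigl(\tfrac{1-e^{-z\tau}}{\tau}\bigr)\bigl(\tfrac{1-e^{-z\tau}}{\tau}\bigr)^{\alpha_1-1}$ paired with the Z-transform $\sum_{j\geq 1}e^{-jz\tau}f^j_{1,h}$ of the data, and similarly for the $f_{2,h}$ term. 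In parallel, from \eqref{equequsysspatialsemilapform} with vanishing initial data, $G_{1,h}(t_n)$ can be written as a Bromwich integral over $\Gamma_{\theta,\kappa}$ of the continuous symbol $H_{\alpha_1,h}(z)z^{\alpha_1-1}\tilde{f}_{1,h}(z)$ plus its $f_{2,h}$ counterpart.

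Next, I would decompose the difference $G_{1,h}(t_n)-G^n_{1,h}$ into three parts: (i) the contour tail over $\Gamma_{\theta,\kappa}\setminus\Gamma^\tau_{\theta,\kappa}$; (ii) the symbol-error on $\Gamma^\tau_{\theta,\kappa}$, comparing $H_{\alpha_1,h}(z)z^{\alpha_1-1}$ with $H_{\alpha_1,h}\!\bigl(\tfrac{1-e^{-z\tau}}{\tau}\bigr)\bigl(\tfrac{1-e^{-z\tau}}{\tau}\bigr)^{\alpha_1-1}$; and (iii) the data-error comparing $\tilde{f}_{1,h}(z)$ with $\tau\sum_{j\geq 1}e^{-jz\tau}f^j_{1,h}$. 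Parts (i) and (ii) can be bounded exactly as in the proof of Theorem~\ref{thmhomfullest}, relying on Lemma~\ref{lemtheestimateofHhblabla} and the elementary estimates $|e^{z\tau}-1|\leq C\tau|z|$ and $\tfrac{1-e^{-z\tau}}{\tau}=z+O(\tau z^2)$; the additional $|z|^{\alpha_1-1}$ factor multiplying $\tilde{f}_{1,h}$ is absorbed after integrating by parts in time to transfer one power of $z$ onto $f_1'$.

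The principal work is in (iii). To exploit the regularity assumption $\int_0^{t_n}\|f_1'(s)\|_{L^2(\Omega)}ds<\infty$, I would write $f_{1,h}(t)=f_{1,h}(0)+\int_0^t f_{1,h}'(s)ds$, so that $\tilde{f}_{1,h}(z)=z^{-1}f_{1,h}(0)+z^{-1}\widetilde{f_{1,h}'}(z)$, and apply summation by parts (Abel summation) on the discrete side to rewrite $\sum_{j\geq 1}e^{-jz\tau}f^j_{1,h}$ as a constant part $f_{1,h}(0)\cdot\frac{e^{-z\tau}}{1-e^{-z\tau}}$ plus a telescoped part in the forward differences $f^{j+1}_{1,h}-f^j_{1,h}=\int_{t_j}^{t_{j+1}}f_{1,h}'(s)ds$. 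Combining with $\tau\,\frac{e^{-z\tau}}{1-e^{-z\tau}}=z^{-1}+O(\tau)$ and the analogous identity for the telescoped piece, the data-error reduces to an $O(\tau)$ factor multiplied by $\|f_{1,h}(0)\|_{L^2(\Omega)}+\int_0^{t_n}\|f_{1,h}'(s)\|_{L^2(\Omega)}ds$; using $\|P_h\|\leq 1$ then replaces $f_{1,h}$ by $f_1$ in the final bound. Integrating along $\Gamma^\tau_{\theta,\kappa}$ against $e^{zt_n}$, with the symbol decay from Lemma~\ref{lemtheestimateofHhblabla} guaranteeing absolute integrability, yields the stated estimate for $G_{1,h}(t_n)-G^n_{1,h}$; the bound for $G_{2,h}(t_n)-G^n_{2,h}$ follows by the same argument with the roles of $\alpha_1$ and $\alpha_2$ exchanged.

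The main obstacle I anticipate is the bookkeeping in step (iii): after summation by parts one must verify that the resulting discrete convolution kernel, integrated against $e^{zt_n}$ along $\Gamma^\tau_{\theta,\kappa}$, is indeed $O(\tau)$ uniformly, and that the extra $|z|$-growth introduced by moving a derivative onto $f$ is absorbed by the decay of $H_{\alpha_1,h}$ or $aH_h$ (in particular, by the $|z|^{-\alpha_2}$ factor in the latter). Once this accounting is done carefully on both the continuous and discrete sides simultaneously, the remainder of the argument is a direct adaptation of the template established in Theorem~\ref{thmhomfullest}.
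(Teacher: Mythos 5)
Your proposal is correct in substance and rests on the same two pillars as the paper's proof — splitting off the data via $f_i=f_i(0)+1\ast f_i'$, and comparing the discrete and continuous operators along $\Gamma^\tau_{\theta,\kappa}$ versus $\Gamma_{\theta,\kappa}$ using Lemma \ref{lemtheestimateofHhblabla}, the contour-tail estimate, and $\frac{1-e^{-z\tau}}{\tau}=z+O(\tau z^2)$ — but it organizes the error differently. The paper keeps the data entirely in the time domain: it notes that the fully discrete solution is \emph{exactly} the convolution of the kernel $\hat{E}_{\alpha_1}=\sum_{j}E_{\alpha_1,j}\delta_{t_j}$ with the sampled data, so after the substitution $f=f(0)+1\ast f'$ the whole error collapses to the difference of two integrated kernels,
\begin{equation*}
\frac{1}{2\pi i}\int_{\Gamma_{\theta,\kappa}}e^{zt}H_{\alpha_1,h}(z)z^{\alpha_1-2}\,dz-Q_{\alpha_1}(t,1),
\end{equation*}
which is shown to be $O(\tau)$ uniformly for $t\in[t_{n-1},t_n)$ and then convolved with $\|f_1'\|_{L^2(\Omega)}$; no separate ``data error'' term ever appears. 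Your route keeps the data in transform space and therefore adds a third term comparing $\tilde{f}_{1,h}(z)$ with the rectangle sum $\tau\sum_{j\geq 1}e^{-jz\tau}f^j_{1,h}$; your Abel-summation treatment of that term is the transform-domain counterpart of the paper's $f=f(0)+1\ast f'$ manipulation and does close to $O(\tau)$. The one point you must make explicit is the meaning of $\tilde{f}_{1,h}(z)$ on the deformed contour, where $\Re z<0$ and the Laplace integral diverges for general $f_1$: either truncate $f_1$ to $[0,t_n]$ (the solution at $t_n$ sees only this restriction) or, as the paper does, invoke $\mathcal{L}^{-1}\{\tilde{u}\tilde{v}\}=\mathcal{L}^{-1}\{\tilde{u}\}\ast v$ so that only the kernel is inverted on the contour. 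With that repaired, your accounting works; note also that the extra $z^{-1}$ gained from integrating by parts is exactly what removes the $t_n^{-1}$-type singularities of Theorem \ref{thmhomfullest} and makes every contour integral $O(\tau)$ uniformly in $n$, so ``exactly as in Theorem \ref{thmhomfullest}'' should be read with the exponents shifted by one.
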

\begin{proof}
	We just give the error estimate between $ G^n_{1,h} $ and $ G_{1,h}(t_n) $. Denote
	\begin{equation*}
	%\begin{aligned}
	aH_h\left(\frac{1-\zeta}{\tau}\right)\left (\frac{1-\zeta}{\tau}\right )^{\alpha_1-1}=\sum_{j=0}^{\infty}E_{1,j}\zeta^j %,\quad
		%\end{aligned}
	\end{equation*}
and
\begin{equation*}
H_{\alpha_1,h}\left(\frac{1-\zeta}{\tau}\right)\left (\frac{1-\zeta}{\tau}\right )^{\alpha_1-1}=\sum_{j=0}^{\infty}E_{\alpha_1,j}\zeta^j.
\end{equation*}
	Then \eqref{equnumsollapformtoestG00} can be rewritten as
	\begin{equation*}
	\sum_{i=0}^{\infty}G^i_{1,h}\zeta^i=\left (\sum_{j=0}^{\infty}E_{\alpha_1,j}\zeta^j\right)\left (\sum_{i=1}^{\infty}\zeta^if^i_{1,h}\right )+\left (\sum_{j=0}^{\infty}E_{1,j}\zeta^j\right)\left (\sum_{i=1}^{\infty}\zeta^if^i_{2,h}\right ).
	\end{equation*}
	Thus
	\begin{equation}\label{equnumerG001repcon1}
		G^n_{1,h}=\lim_{t\rightarrow t_n^{-}}\left(\left (\hat{E}_{\alpha_1}\ast f_{1,h}\right )(t)+\left (\hat{E}_{1}\ast f_{2,h}\right )(t)\right),
	\end{equation}
	where $ \hat{E}_{\alpha_1}=\sum_{j=0}^{\infty}E_{\alpha_1,j}\delta_{t_j} $ and $ \hat{E}_{1}=\sum_{j=0}^{\infty}E_{1,j}\delta_{t_j} $ with $ \delta_{t} $ the delta function concentrated at $t$. Introduce
	\begin{equation*}
		Q_{\alpha_1}(t',v)=\hat{E}_{\alpha_1}\ast v,\quad Q_{1}(t',v)=\hat{E}_{1}\ast v.
	\end{equation*}
	 The fact $f_1(t)=f_1(0)+1\ast f_1'(t)$, $f_2(t)=f_2(0)+1\ast f_2'(t)$, and \eqref{equnumerG001repcon1} leads to
	 \begin{equation}\label{equG00fullsollap}
	 \begin{aligned}
		 G^n_{1,h}=&\lim_{t\rightarrow t_n^{-}}\left((Q_{\alpha_1}(t',1)\ast f_1')(t)+(Q_{1}(t',1)\ast f_2')(t)\right)\\
		 &+\lim_{t\rightarrow t_n^{-}}\left(Q_{\alpha_1}(t,1) f_1(0)+Q_{1}(t,1) f_2(0)\right).
	 \end{aligned}
	 \end{equation}
	 Similarly,
	 \begin{equation}\label{equG00semisollap}
	 \begin{aligned}
	 G_{1,h}(t)=&\frac{1}{2\pi i}\left (\int_{\Gamma_{\theta,\kappa}}e^{zt'}H_{\alpha_1,h}(z)z^{\alpha_1-1}z^{-1}dz\ast f_1'(t')\right )(t)\\&+\frac{a}{2\pi i}\left (\int_{\Gamma_{\theta,\kappa}}e^{zt'}H_{h}(z)z^{\alpha_1-1}z^{-1}dz\ast f_2'(t')\right )(t)\\
	 &+\frac{1}{2\pi i}\int_{\Gamma_{\theta,\kappa}}e^{zt}H_{\alpha_1,h}(z)z^{\alpha_1-1}z^{-1}dz f_1(0)\\&+\frac{a}{2\pi i}\int_{\Gamma_{\theta,\kappa}}e^{zt}H_{h}(z)z^{\alpha_1-1}z^{-1}dz f_2(0)
	 \end{aligned}
	 \end{equation}
	 can be obtained from \eqref{equequsysspatialsemilapform}. Combining \eqref{equG00fullsollap} and \eqref{equG00semisollap} results in
	 \begin{equation*}
	 \begin{aligned}
	 G_{1,h}(t_n)-G^n_{1,h}=&\lim_{t\rightarrow t_n^{-}}\left(\left (\uppercase\expandafter{\romannumeral1}(t')\ast f_1'(t')\right )(t)+\left (\uppercase\expandafter{\romannumeral2}(t')\ast f_2'(t')\right )(t)\right)\\
	 &+\lim_{t\rightarrow t_n^{-}}\left(\uppercase\expandafter{\romannumeral1}(t) f_1(0)+\uppercase\expandafter{\romannumeral2}(t) f_2(0)\right),
	 \end{aligned}
	 \end{equation*}
	 where
	 \begin{equation*}
	 	\begin{aligned}
	 		&\uppercase\expandafter{\romannumeral1}(t)=\frac{1}{2\pi i} \int_{\Gamma_{\theta,\kappa}}e^{zt}H_{\alpha_1,h}(z)z^{\alpha_1-1}z^{-1}dz-Q_{\alpha_1}(t,1),\\
	 		&\uppercase\expandafter{\romannumeral2}(t)=\frac{1}{2\pi i} \int_{\Gamma_{\theta,\kappa}}e^{zt}H_{h}(z)z^{\alpha_1-1}z^{-1}dz-Q_{1}(t,1).\\
	 	\end{aligned}
	 \end{equation*}
	 For $t\in[t_{n-1},t_n)$, when $ n=1$, we have
	 \begin{equation*}
	 	\|\uppercase\expandafter{\romannumeral1}(t)\|\leq C\tau.
	 \end{equation*}
	 As for $n>1$, we take $\kappa\geq1/t^{n-1}$ and ensure that $\kappa$ is large enough to satisfy the conditions in Lemma \ref{lemtheestimateofHhblabla}. Then we have
	 \begin{equation*}
	 	\begin{aligned}
	 		\|\uppercase\expandafter{\romannumeral1}\|=&\Big \|\frac{1}{2\pi i} \int_{\Gamma_{\theta,\kappa}}e^{zt}H_{\alpha_1,h}(z)z^{\alpha_1-1}z^{-1}dz-\frac{1}{2\pi i} \int_{\Gamma_{\theta,\kappa}}e^{zt_{n-1}}H_{\alpha_1,h}(z)z^{\alpha_1-1}z^{-1}dz\Big \|\\
	 		&+\Big \|\frac{1}{2\pi i} \int_{\Gamma_{\theta,\kappa}}e^{zt_{n-1}}H_{\alpha_1,h}(z)z^{\alpha_1-1}z^{-1}dz-Q_{\alpha_1}(t,1)\Big \|\\
	 		\leq&C\tau+\Big \|\frac{1}{2\pi i} \int_{\Gamma_{\theta,\kappa}}e^{zt_{n-1}}H_{\alpha_1,h}(z)z^{\alpha_1-1}z^{-1}dz-Q_{\alpha_1}(t,1)\Big \|.	 		
	 	\end{aligned}
	 \end{equation*}
	 Taking $\xi_\tau=e^{-\tau(\kappa+1)}$, we obtain
	 \begin{equation*}
	 	E_{\alpha_1,n}=\frac{1}{2\pi i}\int_{|\zeta|=\xi_\tau}
	 	\zeta^{-n-1}H_{\alpha_1,h}\left(\frac{1-\zeta}{\tau}\right)\left (\frac{1-\zeta}{\tau}\right )^{\alpha_1-1}d\zeta.
	 \end{equation*}
	 Then
	 \begin{equation*}
	 	Q_{\alpha_1}(t,1)=\sum_{j=0}^{n-1}E_{\alpha_1,j}=\frac{1}{2\pi\tau i}\int_{|\zeta|=\xi_\tau}\zeta^{-n}H_{\alpha_1,h}\left(\frac{1-\zeta}{\tau}\right)\left (\frac{1-\zeta}{\tau}\right )^{\alpha_1-2}d\zeta
	 \end{equation*}
	 can be obtained from the fact $ \sum_{j=0}^{n-1}\ \zeta^{-j-1}=(\zeta^{-n}-1)/(1-\zeta)$ and for small $\zeta$, the term $ \left ((1-\zeta)/(\tau)\right )^{\alpha_1-1}H_{\alpha_1,h}\left((1-\zeta)/(\tau)\right)/(1-\zeta)$ is analytic. Taking $\zeta=e^{-z\tau}$, we get
	 \begin{equation*}
	 	Q_{\alpha_1}(t,1)=\frac{1}{2\pi i}\int_{\Gamma^\tau}e^{zt_{n-1}}H_{\alpha_1,h}\left(\frac{1-e^{-z\tau}}{\tau}\right)\left (\frac{1-e^{-z\tau}}{\tau}\right )^{\alpha_1-2}dz,
	 \end{equation*}
	  where $\Gamma^\tau=\{z=\kappa+1+iy:y\in\mathbb{R}~{\rm and}~|y|\leq \pi/\tau\}$. Next we deform the contour $\Gamma^\tau$ to
	 $\Gamma^\tau_{\theta,\kappa}=\{z\in \mathbb{C}:\kappa\leq |z|\leq\frac{\pi}{\tau\sin(\theta)},|\arg z|=\theta\}\bigcup\{z\in \mathbb{C}:|z|=\kappa,|\arg z|\leq\theta\}$. Thus
	 	 \begin{equation*}
	 Q_{\alpha_1}(t,1)=\frac{1}{2\pi i}\int_{\Gamma^{\tau}_{\theta,\kappa}}e^{zt_{n-1}}H_{\alpha_1,h}\left(\frac{1-e^{-z\tau}}{\tau}\right)\left (\frac{1-e^{-z\tau}}{\tau}\right )^{\alpha_1-2}dz,
	 \end{equation*}
	 which leads to
	 \begin{equation*}
	 	\begin{aligned}
	 		&\frac{1}{2\pi i} \int_{\Gamma_{\theta,\kappa}}e^{zt_{n-1}}H_{\alpha_1,h}(z)z^{\alpha_1-1}z^{-1}dz-Q_{\alpha_1}(t,1)\\
	 		=&\frac{1}{2\pi i} \int_{\Gamma_{\theta,\kappa}\backslash\Gamma^\tau_{\theta,\kappa}}e^{zt_{n-1}}H_{\alpha_1,h}(z)z^{\alpha_1-2}dz\\
	 		&+\frac{1}{2\pi i} \int_{\Gamma^\tau_{\theta,\kappa}}e^{zt_{n-1}}\left (H_{\alpha_1,h}(z)z^{\alpha_1-2}-H_{\alpha_1,h}\left(\frac{1-e^{-z\tau}}{\tau}\right)\left (\frac{1-e^{-z\tau}}{\tau}\right )^{\alpha_1-2}\right )dz\\
	 		=&\uppercase\expandafter{\romannumeral1}_1+\uppercase\expandafter{\romannumeral1}_2.
	 	\end{aligned}
	 \end{equation*}
	 For $\uppercase\expandafter{\romannumeral1}_1$, according to Lemma \ref{lemtheestimateofHhblabla}, we have the following estimate
	 \begin{equation*}
	 	\|\uppercase\expandafter{\romannumeral1}_1\|\leq C \int_{\Gamma_{\theta,\kappa}\backslash\Gamma^\tau_{\theta,\kappa}}e^{-C|z|t_{n-1}}|z|^{-2}|dz|\leq C\tau.
	 \end{equation*}
	 For $\uppercase\expandafter{\romannumeral1}_2$, using $ \left \|\frac{d}{dz}\left (H_{\alpha_1,h}(z)z^{\alpha_1-2}\right )\right \|\leq C|z|^{-3} $, the mean value theorem, and the fact $\left(\frac{1-e^{-z\tau}}{\tau}\right)=z+O(\tau z^2)$, we obtain
	 \begin{equation*}
	 	\Big \|H_{\alpha_1,h}(z)z^{\alpha_1-2}-H_{\alpha_1,h}\Big(\frac{1-e^{-z\tau}}{\tau}\Big)\Big (\frac{1-e^{-z\tau}}{\tau}\Big )^{\alpha_1-2}\Big\|\leq C|z|^{-3}|\tau z^2|\leq C\tau|z|^{-1}.
	 \end{equation*}
	 So
	 \begin{equation*}
	 	\|\uppercase\expandafter{\romannumeral1}_2\|\leq C\tau\int_{\Gamma^\tau_{\theta,\kappa}}e^{-C|z|t_{n-1}}|z|^{-1}|dz|\leq C\tau.
	 \end{equation*}
	 Consequently, we have
	 \begin{equation*}
	 	\|\uppercase\expandafter{\romannumeral1}\|\leq C\tau.
	 \end{equation*}
	 Similarly,
	 \begin{equation*}
	 	\|\uppercase\expandafter{\romannumeral2}\|\leq C\tau.
	 \end{equation*}
	Therefore, we get
	 \begin{equation*}
	 \begin{aligned}
	 	\|G_{1,h}(t_n)-G^n_{1,h}\|_{L^2(\Omega)}\leq& C\tau \left((1*\|f_1'\|_{L^2(\Omega)})(t_n)+(1*\|f_2'\|_{L^2(\Omega)})(t_n)\right.\\
	 	&\left.+\|f_1(0)\|_{L^2(\Omega)}+\|f_2(0)\|_{L^2(\Omega)}\right).
	 \end{aligned}
	 \end{equation*}
	 Also, we can obtain
	  \begin{equation*}
	  \begin{aligned}
	 \|G_{2,h}(t_n)-G^n_{2,h}\|_{L^2(\Omega)}\leq& C\tau \left((1*\|f_1'\|_{L^2(\Omega)})(t_n)+(1*\|f_2'\|_{L^2(\Omega)})(t_n)\right.\\&+\left.\|f_1(0)\|_{L^2(\Omega)}+\|f_2(0)\|_{L^2(\Omega)}\right).
	 \end{aligned}
	 \end{equation*}
\end{proof}

Lastly, Theorems \ref{thmvanisheddatasemi} and \ref{thminhomfullest} lead to the error estimates.
\begin{theorem}
	Let $G_{1}$, $G_{2}$ and $G^n_{1,h}$, $G^n_{2,h}$ be the solutions of the systems \eqref{equrqtosol} and \eqref{equfulldis} with $G_{1,0}=0$, $G_{2,0}=0$, $\int_{0}^{t_n}\|f_1(s)\|_{L^2(\Omega)}ds<\infty$, $\int_{0}^{t_n}\|f_2(s)\|_{L^2(\Omega)}ds<\infty$,  $\int_{0}^{t_n}\|f_1'(s)\|_{L^2(\Omega)}ds<\infty$, and $\int_{0}^{t_n}\|f_2'(s)\|_{L^2(\Omega)}ds<\infty$.  Then
	\begin{equation*}
	\begin{aligned}
	\|G_{1}(t_n)-G^n_{1,h}\|_{L^{2}(\Omega)}\leq&Ch^2\left (\int_0^t (t-s)^{-\alpha_1}\|f_1(s)\|_{L^2(\Omega)}ds+\int_0^t\|f_2(s)\|_{L^2(\Omega)}ds\right )\\
	&+C\tau \left(\int_{0}^{t_n}\|f_1'(s)\|_{L^2(\Omega)}ds+\|f_1(0)\|_{L^2(\Omega)}\right.\\&\left.+\int_{0}^{t_n}\|f_2'(s)\|_{L^2(\Omega)}ds+\|f_2(0)\|_{L^2(\Omega)}\right),\\
	\|G_{2}(t_n)-G^n_{2,h}\|_{L^2(\Omega)}\leq&Ch^2\left (\int_0^t\|f_1(s)\|_{L^2(\Omega)}ds+\int_0^t(t-s)^{-\alpha_2}\|f_2(s)\|_{L^2(\Omega)}ds\right )\\
	 &+C\tau \left(\int_{0}^{t_n}\|f_1'(s)\|_{L^2(\Omega)}dt+\|f_1(0)\|_{L^2(\Omega)}\right.\\
	&\left.+\int_{0}^{t_n}\|f_2'(s)\|_{L^2(\Omega)}ds+\|f_2(0)\|_{L^2(\Omega)}\right).
	\end{aligned}
	\end{equation*}
\end{theorem}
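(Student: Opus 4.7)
The plan is to obtain the final error estimate by simply splitting the total error into a space-semidiscrete part and a time-discretization part via the triangle inequality, and then applying the two results already proved.

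More precisely, I would write
\begin{equation*}
\|G_1(t_n)-G^n_{1,h}\|_{L^2(\Omega)}\leq \|G_1(t_n)-G_{1,h}(t_n)\|_{L^2(\Omega)}+\|G_{1,h}(t_n)-G^n_{1,h}\|_{L^2(\Omega)},
\end{equation*}
and analogously for $G_2$. The first term on the right-hand side is the error at the continuous time level $t_n$ between the exact solution of \eqref{equrqtosol} and the space semidiscrete solution of \eqref{equequsysspatialsemiAh}, which is precisely what Theorem \ref{thmvanisheddatasemi} controls: it contributes the $Ch^2$ terms involving the weighted integrals of $\|f_1\|_{L^2(\Omega)}$ and $\|f_2\|_{L^2(\Omega)}$ with the singular kernels $(t-s)^{-\alpha_1}$ and $(t-s)^{-\alpha_2}$. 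The second term is the time-discretization error between the space semidiscrete solution and the fully discrete solution of \eqref{equfulldis} (both sharing the same spatial discretization and the same zero initial data), which is exactly the content of Theorem \ref{thminhomfullest}: it contributes the $C\tau$ terms involving $\int_0^{t_n}\|f_i'(s)\|_{L^2(\Omega)}ds$ and $\|f_i(0)\|_{L^2(\Omega)}$ for $i=1,2$. Adding the two estimates gives the claimed bound on $\|G_1(t_n)-G^n_{1,h}\|_{L^2(\Omega)}$, and symmetrically for $G_2$.

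A minor point worth checking is that the hypotheses assumed here genuinely imply those of the two invoked theorems. The conditions $\int_0^{t_n}\|f_i(s)\|_{L^2(\Omega)}ds<\infty$ together with the integrability of $f_i'$ give $f_i\in L^{\infty}(0,T;L^2(\Omega))$ (since $f_i(t)=f_i(0)+\int_0^t f_i'(s)\,ds$), so the hypothesis $f_1,f_2\in L^{\infty}(0,T;L^2(\Omega))$ needed in Theorem \ref{thmvanisheddatasemi} is satisfied, and the integrability of $f_i'$ is exactly what Theorem \ref{thminhomfullest} requires. Thus no additional estimate is needed beyond the triangle inequality.

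Since both building blocks are already proved, I do not expect any substantive obstacle; the only thing to watch is bookkeeping, namely making sure the two initial conditions match (here both semidiscrete and fully discrete schemes start from zero, so the $P_h$ versus $R_h$ distinction in Section~3 is irrelevant) and presenting the summed bound in the form stated. The proof itself will therefore be a brief two-line argument citing Theorems \ref{thmvanisheddatasemi} and \ref{thminhomfullest}.
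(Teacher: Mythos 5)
Your proposal is correct and coincides with the paper's own argument: the paper states this theorem as an immediate consequence of Theorems \ref{thmvanisheddatasemi} and \ref{thminhomfullest}, combined exactly by the triangle-inequality splitting $\|G_i(t_n)-G^n_{i,h}\|\leq\|G_i(t_n)-G_{i,h}(t_n)\|+\|G_{i,h}(t_n)-G^n_{i,h}\|$ that you describe. Your additional check that the stated hypotheses imply those of the two invoked theorems is sound bookkeeping that the paper leaves implicit.
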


\section{Numerical experiments}
In this section, we perform the one- and two-dimensional numerical experiments to verify the effectiveness of the numerical schemes. Here, we let
\begin{equation*}
\begin{aligned}
E_{1,h}=E_{1,\tau}=G_{1}(t_n)-G^{n}_{1,h},\quad
E_{2,h}=E_{2,\tau}=G_{2}(t_n)-G^{n}_{2,h},
\end{aligned}
\end{equation*}
if the exact solutions $G_1$ and $G_2$ are known. If the exact solutions $G_1$ and $G_2$ are unknown, to get the spatial errors, denote
\begin{equation*}
\begin{aligned}
E_{1,h}=G^{n}_{1,h}-G^{n}_{1,h/2},\quad
E_{2,h}=G^{n}_{2,h}-G^{n}_{2,h/2},
\end{aligned}
\end{equation*}
where the $G^n_{1,h}$ and $G^n_{2,h}$ mean the numerical solutions of $G_{1}$ and $G_{2}$ at $t_n$ with mesh size $h$; similarly, to get the temporal errors, we let
\begin{equation*}
\begin{aligned}
E_{1,\tau}=G_{1,\tau}-G_{1,\tau/2},\quad
E_{2,\tau}=G_{2,\tau}-G_{2,\tau/2},
\end{aligned}
\end{equation*}
where the $G_{1,\tau}$ and $G_{2,\tau}$ are the numerical solutions of $G_{1}$ and $G_{2}$ at the fixed time $t$ with step size $\tau$. The spatial and temporal convergence rates can be, respectively, calculated by
\begin{equation*}
	{\rm Rate}=\frac{\ln(E_{i,h}/E_{i,h/2})}{\ln(2)},\quad {\rm Rate}=\frac{\ln(E_{i,\tau}/E_{i,\tau/2})}{\ln(2)},\  i=1,2.
\end{equation*}
\subsection{One-dimensional cases}
\begin{example}
Consider the system \eqref{equmatrixequ} with the exact solution
\begin{equation*}
\begin{aligned}
G_1(x,t)=t^\nu x(1-x),\quad
G_2(x,t)=t^\nu x^2(1-x).
\end{aligned}
\end{equation*}
So the initial values are
\begin{equation*}
	\begin{aligned}
	G_{1,0}=0,\quad G_{2,0}=0,
	\end{aligned}
\end{equation*}
and the source terms
\begin{equation*}
	\begin{aligned}
	f_1(x,t)=&\nu t^{\nu-1}x(1-x)-2\frac{\Gamma(1+\nu)}{\Gamma(\nu+\alpha_1)}t^{\nu-1+\alpha_1}\\
	&+\frac{\Gamma(1+\nu)}{\Gamma(\nu+\alpha_1)}t^{\nu-1+\alpha_1}x(1-x)-\frac{\Gamma(1+\nu)}{\Gamma(\nu+\alpha_2)}t^{\nu-1+\alpha_2}x^2(1-x),\\
	f_2(x,t)=&\nu t^{\nu-1}x^2(1-x)-\frac{\Gamma(1+\nu)}{\Gamma(\nu+\alpha_2)}t^{\nu-1+\alpha_2}(2-6x)\\
	&+\frac{\Gamma(1+\nu)}{\Gamma(\nu+\alpha_2)}t^{\nu-1+\alpha_2}x^2(1-x)-\frac{\Gamma(1+\nu)}{\Gamma(\nu+\alpha_1)}t^{\nu-1+\alpha_1}x(1-x).\\
	\end{aligned}
\end{equation*}
Here we set $\nu=1.01$ and $a=2$. To get the spatial convergence rates, we take $\tau=0.1/1600$, so that the error incurred by temporal discretization is negligible, and the results are shown in Table \ref{tab:1dexam1spa}, which verify Theorem \ref{thmvanisheddatasemi}. Meanwhile, we take $h=1/256$ to get the temporal convergence rate, and Table \ref{tab:1dexam1time} shows the corresponding results, which validate Theorem \ref{thminhomfullest}.
\begin{table}[htbp]
	\caption{$L_2$ error at $t=0.1$}
	\label{tab:1dexam1spa}
		\begin{tabular}{c|c|cccc}
		\hline
		$(\alpha_1,\alpha_2)$&1/h            &          8 &         16 &         32 &         64 \\
		\hline
		&      $\|E_{1,h}\|_{L^2(\Omega)}$    &  2.417E-04 &  5.873E-05 &  1.316E-05 &  2.406E-06 \\
		
		(0.1,0.2) &            &    Rate  &    2.0409  &    2.1574  &    2.4519  \\
		
		&      $\|E_{2,h}\|_{L^2(\Omega)}$      &  2.694E-04 &  6.665E-05 &  1.585E-05 &  3.360E-06 \\
		
		&            &  Rate          &    2.0149  &    2.0719  &    2.2380  \\
		\hline
		&   $\|E_{1,h}\|_{L^2(\Omega)}$        &  2.232E-04 &  5.413E-05 &  1.210E-05 &  2.416E-06 \\
		
		(0.4,0.6) &            & Rate           &    2.0441  &    2.1620  &    2.3238  \\
		
		& $\|E_{2,h}\|_{L^2(\Omega)}$          &  2.563E-04 &  6.368E-05 &  1.548E-05 &  3.491E-06 \\
		
		&            &   Rate         &    2.0087  &    2.0404  &    2.1486  \\
		\hline
		&     $\|E_{1,h}\|_{L^2(\Omega)}$      &  1.924E-04 &  4.745E-05 &  1.142E-05 &  2.520E-06 \\
		
		(0.8,0.9) &            &Rate            &    2.0197  &    2.0551  &    2.1802  \\
		
		&  $\|E_{2,h}\|_{L^2(\Omega)}$         &  2.339E-04 &  5.831E-05 &  1.448E-05 &  3.528E-06 \\
		
		&            &  Rate         &    2.0040  &    2.0101  &    2.0370  \\
		\hline
	\end{tabular}
\end{table}

\begin{table}[htbp]
	\caption{$L_2$ error at $t=0.1$}
	\label{tab:1dexam1time}
	\begin{tabular}{c|c|ccccc}
		\hline
		$(\alpha_1,\alpha_2)$&$0.1/\tau$            &        100 &        200 &        400 &        800 &       1600 \\
		\hline
		&  $\|E_{1,\tau}\|_{L^2(\Omega)}$          &  3.389E-05 &  1.833E-05 &  9.758E-06 &  5.106E-06 &  2.612E-06 \\
		
		(0.1,0.2) &            & Rate           &    0.8869  &    0.9094  &    0.9342  &    0.9671  \\
		
		& $\|E_{2,\tau}\|_{L^2(\Omega)}$            &  2.375E-05 &  1.251E-05 &  6.480E-06 &  3.293E-06 &  1.628E-06 \\
		
		&            &  Rate          &    0.9251  &    0.9485  &    0.9765  &    1.0163  \\
		\hline
		&   $\|E_{1,\tau}\|_{L^2(\Omega)}$          &  4.254E-05 &  2.180E-05 &  1.105E-05 &  5.538E-06 &  2.727E-06 \\
		
		(0.4,0.6) &            &    Rate  &    0.9647  &    0.9797  &    0.9969  &    1.0219  \\
		
		&  $\|E_{2,\tau}\|_{L^2(\Omega)}$           &  1.512E-05 &  7.598E-06 &  3.767E-06 &  1.830E-06 &  8.611E-07 \\
		
		&            &  Rate         &    0.9930  &    1.0123  &    1.0417  &    1.0874  \\
		\hline
		&  $\|E_{1,\tau}\|_{L^2(\Omega)}$           &  1.538E-05 &  7.784E-06 &  3.923E-06 &  1.968E-06 &  9.795E-07 \\
		
		(0.8,0.9) &            & Rate           &    0.9824  &    0.9884  &    0.9955  &    1.0065  \\
		
		&  $\|E_{2,\tau}\|_{L^2(\Omega)}$           &  3.967E-06 &  2.025E-06 &  1.026E-06 &  5.133E-07 &  2.529E-07 \\
		
		&            &   Rate         &    0.9699  &    0.9817  &    0.9986  &    1.0214  \\
		\hline
	\end{tabular}
\end{table}

\end{example}
\begin{example}[Smooth initial value]
	Here consider the homogeneous problem \eqref{equmatrixequ} with smooth initial value, i.e.,
	\begin{equation*}
		G_{1,0}(x)=x(1-x),\quad G_{2,0}(x)=\sin(\pi x),
	\end{equation*}
	and $ f_1(x,t)=f_2(x,t)=0 $.
	It's easy to get that $G_{1,0}$, $G_{2,0}\in H^1_0(\Omega)\bigcap H^2(\Omega)$. Here, we choose $a=-10$. To investigate the convergence in space and eliminate the influence from temporal discretization, we take $\tau=0.01/1600$ and the results are shown in Table \ref{tab:1dexam2spa},  which verify Theorem \ref{thmsmoothdatasemi}. We take $h=1/256$ to verify the temporal convergence rate and the results are shown in Table \ref{tab:1dexam2time}, which agree with Theorem \ref{thmhomfullest}.
	\begin{table}[htbp]
		\caption{$L_2$ error at $t=0.01$}
		\label{tab:1dexam2spa}
		\begin{tabular}{c|c|cccc}
			\hline
			$(\alpha_1,\alpha_2)$&$1/h$            &          8 &         16 &         32 &         64 \\
			\hline
			&  $\|E_{1,h}\|_{L^2(\Omega)}$          &  7.045E-04 &  1.722E-04 &  4.281E-05 &  1.069E-05 \\
			
			(0.05,0.15) &            &  Rate          &    2.0324  &    2.0081  &    2.0020  \\
			
			& $\|E_{2,h}\|_{L^2(\Omega)}$           &  1.395E-03 &  3.444E-04 &  8.583E-05 &  2.144E-05 \\
			
			&            & Rate           &    2.0180  &    2.0045  &    2.0011  \\
			\hline
			&  $\|E_{1,h}\|_{L^2(\Omega)}$          &  4.233E-02 &  1.072E-02 &  2.689E-03 &  6.727E-04 \\
			
			(0.45,0.55) &            &Rate          0 &    1.9815  &    1.9954  &    1.9989  \\
			
			&  $\|E_{2,h}\|_{L^2(\Omega)}$          &  5.178E-02 &  1.309E-02 &  3.281E-03 &  8.209E-04 \\
			
			&            & Rate           &    1.9840  &    1.9960  &    1.9990  \\
			\hline
			&  $\|E_{1,h}\|_{L^2(\Omega)}$          &  1.673E-03 &  4.132E-04 &  1.030E-04 &  2.573E-05 \\
			
			(0.85,0.95) &            &Rate            &    2.0174  &    2.0044  &    2.0011  \\
			
			&  $\|E_{2,h}\|_{L^2(\Omega)}$          &  4.403E-03 &  1.086E-03 &  2.706E-04 &  6.758E-05 \\
			
			&            &  Rate          &    2.0195  &    2.0049  &    2.0012  \\
			\hline
		\end{tabular}
	\end{table}

	\begin{table}[htbp]
		\caption{$L_2$ error at $t=0.01$}
		\label{tab:1dexam2time}
		\begin{tabular}{c|c|ccccc}
			\hline
			$(\alpha_1,\alpha_2)$&$0.01/\tau$            &        100 &        200 &        400 &        800 &       1600 \\
			\hline
			&   $\|E_{1,\tau}\|_{L^2(\Omega)}$          &  3.032E-05 &  1.511E-05 &  7.541E-06 &  3.767E-06 &  1.883E-06 \\
			
			(0.05,0.15) &            &Rate            &    1.0049  &    1.0024  &    1.0012  &    1.0006  \\
			
			&  $\|E_{2,\tau}\|_{L^2(\Omega)}$           &  4.757E-05 &  2.369E-05 &  1.182E-05 &  5.903E-06 &  2.950E-06 \\
			
			&            &Rate            &    1.0061  &    1.0030  &    1.0016  &    1.0007  \\
			\hline
			&   $\|E_{1,\tau}\|_{L^2(\Omega)}$          &  1.088E-02 &  5.360E-03 &  2.660E-03 &  1.325E-03 &  6.613E-04 \\
			
			(0.45,0.55) &            &Rate          0 &    1.0215  &    1.0107  &    1.0053  &    1.0027  \\
			
			&  $\|E_{2,\tau}\|_{L^2(\Omega)}$           &  1.400E-02 &  6.900E-03 &  3.425E-03 &  1.706E-03 &  8.517E-04 \\
			
			&            & Rate           &    1.0210  &    1.0104  &    1.0052  &    1.0026  \\
			\hline
			&   $\|E_{1,\tau}\|_{L^2(\Omega)}$          &  1.753E-05 &  8.726E-06 &  4.354E-06 &  2.175E-06 &  1.087E-06 \\
			
			(0.85,0.95) &            &Rate          0 &    1.0061  &    1.0031  &    1.0015  &    1.0008  \\
			
			& $\|E_{2,\tau}\|_{L^2(\Omega)}$            &  5.946E-05 &  2.971E-05 &  1.485E-05 &  7.425E-06 &  3.712E-06 \\
			
			&            &  Rate          &    1.0009  &    1.0004  &    1.0002  &    1.0001  \\
			\hline
		\end{tabular}
		 	
	\end{table}
	Furthermore, we take the fixed $N$, $\alpha_1$, and $\alpha_2$ to validate Theorem \ref{thmhomfullest}'s estimates (theoretical decay rates with $t\rightarrow 0$)
%to validate the theoretical decay rates, we take the fixed $N$, $\alpha_1$ and $\alpha_2$. And
%the following estimates
	\begin{equation*}
		\begin{aligned}
		\|G_{1,h}(t_N)-G^N_{1,h}\|_{L^2(\Omega)}\leq C\left (N^{-1}\|G_{1,h}(0)\|_{L^2(\Omega)}+N^{-1}t_N^{\alpha_2}\|G_{2,h}(0)\|_{L^2(\Omega)}\right ),\\
		\|G_{2,h}(t_N)-G^N_{2,h}\|_{L^2(\Omega)}\leq C\left (N^{-1}t_N^{\alpha_1}\|G_{1,h}(0)\|_{L^2(\Omega)}+N^{-1}\|G_{2,h}(0)\|_{L^2(\Omega)}\right ).
		\end{aligned}
	\end{equation*}
	%can be obtained by Theorem \ref{thmhomfullest}.
	Let $N=10$, $\alpha_1=0.3$, $\alpha_2=0.7$, and define the initial values as
	\begin{equation*}
		G_{1,0}(x)=0,\quad G_{2,0}(x)=\sin(\pi x).
	\end{equation*}
	Thus the decay rates caused by $\|G_{1,h}(0)\|_{L^2(\Omega)}$ can be ignored and the theoretical decay rates of $\|G_{1,h}(t_N)-G^N_{1,h}\|_{L^2(\Omega)}$ and $ \|G_{2,h}(t_N)-G^N_{2,h}\|_{L^2(\Omega)} $  are $t^{\alpha_2}_N$ and $t^0_N$, respectively, when $t\rightarrow 0$. Table \ref{tab:1dexample2G10} shows that the temporal errors decrease like $t^{0.7}_N$ and $t^0_N$, respectively, when $t\rightarrow 0$.
	\begin{table}
		\caption{$L_2$ error as $t\rightarrow 0$}
		\label{tab:1dexample2G10}
		\begin{tabular}{c|cccccc}
			\hline
			&     1.E-01 &     1.E-02 &     1.E-03 &     1.E-04 &     1.E-05 &     1.E-06 \\
			\hline
			$\|E_{1,h}\|_{L^2(\Omega)}$&  4.540E-07 &  2.557E-07 &  7.693E-08 &  1.897E-08 &  4.293E-09 &  9.322E-10 \\
			
			&  Rate         &    0.2494  &    0.5216  &    0.6081  &    0.6452  &    0.6633  \\
			\hline
			$\|E_{2,h}\|_{L^2(\Omega)}$&  6.143E-06 &  1.179E-05 &  1.416E-05 &  1.509E-05 &  1.531E-05 &  1.536E-05 \\
			
			&    Rate        &   -0.2831  &   -0.0796  &   -0.0276  &   -0.0063  &   -0.0013  \\
			\hline
		\end{tabular}  	
	\end{table}
	Similarly, define the initial values as
	\begin{equation*}
	G_{1,0}(x)=x(1-x),\quad G_{2,0}(x)=0.
	\end{equation*}
	 Table \ref{tab:1dexample2G20} shows that the temporal errors decrease like $t^0_N$ and $t^{0.3}_N$ (when $t\rightarrow 0$),  respectively, which agree with the theoretical predictions.
	\begin{table}
			\caption{$L_2$ error as $t\rightarrow 0$}
		\label{tab:1dexample2G20}
		\begin{tabular}{c|cccccc}
			\hline
			&     1.E-01 &     1.E-02 &     1.E-03 &     1.E-04 &     1.E-05 &     1.E-06 \\
			\hline
			$\|E_{1,h}\|_{L^2(\Omega)}$&  8.957E-07 &  1.479E-06 &  2.157E-06 &  2.741E-06 &  3.175E-06 &  3.522E-06 \\
			
			&  Rate         &   -0.2178  &   -0.1640  &   -0.1041  &   -0.0637  &   -0.0451  \\
			\hline
			$\|E_{2,h}\|_{L^2(\Omega)}$&  2.236E-07 &  3.869E-07 &  3.071E-07 &  1.945E-07 &  1.124E-07 &  6.227E-08 \\
			
			&  Rate          &   -0.2382  &    0.1003  &    0.1983  &    0.2384  &    0.2563  \\
			\hline
		\end{tabular}
		
	\end{table}
\end{example}
\begin{example}[Nonsmooth initial value]
	Consider the homogeneous problem \eqref{equmatrixequ} with nonsmooth initial value. Let
	\begin{equation*}
	G_{1,0}(x)=\chi_{(3/4,1)}(x),\quad G_{2,0}(x)=\chi_{(0,1/4)}(x),
	\end{equation*}
	 $ f_1(x,t)=f_2(x,t)=0 $, and $a=10$. To validate the spatial convergence rates, we take $\tau=0.01/1600$ to  eliminate the influence from time discretization, and the results are shown in Table \ref{tab:1dexam3spa}, which verify Theorem \ref{thmnonsmoothdatasemi}. Then we let $h=1/256$ to get the temporal convergence rate, and Table \ref{tab:1dexam3time} provides the results, which verify Theorem \ref{thmhomfullest}.
	
	\begin{table}[htbp]
		\caption{$L_2$ error at t=0.01}
		\label{tab:1dexam3spa}
		\begin{tabular}{c|c|cccc}
		\hline
		($\alpha_1$,$\alpha_2$)&$1/h$            &          8 &         16 &         32 &         64 \\
		\hline
		&  $\|E_{1,h}\|_{L^2(\Omega)}$          &  7.125E-04 &  1.777E-04 &  4.439E-05 &  1.110E-05 \\
		
		(0.1,0.2) &            &  Rate          &    2.0037  &    2.0008  &    2.0002  \\
		
		&   $\|E_{2,h}\|_{L^2(\Omega)}$         &  1.030E-03 &  2.568E-04 &  6.416E-05 &  1.604E-05 \\
		
		&            &Rate           &    2.0038  &    2.0008  &    2.0002  \\
		\hline
		&   $\|E_{1,h}\|_{L^2(\Omega)}$         &  1.989E-03 &  4.959E-04 &  1.239E-04 &  3.097E-05 \\
		
		(0.4,0.6) &            & Rate           &    2.0042  &    2.0009  &    2.0002  \\
		
		&  $\|E_{2,h}\|_{L^2(\Omega)}$          &  3.609E-03 &  9.007E-04 &  2.251E-04 &  5.627E-05 \\
		
		&            &Rate            &    2.0025  &    2.0006  &    2.0002  \\
		\hline
		&  $\|E_{1,h}\|_{L^2(\Omega)}$          &  6.928E-03 &  1.742E-03 &  4.359E-04 &  1.090E-04 \\
		
		(0.8,0.9) &            &  Rate          &    1.9919  &    1.9986  &    1.9997  \\
		
		&  $\|E_{2,h}\|_{L^2(\Omega)}$          &  1.053E-02 &  2.674E-03 &  6.705E-04 &  1.677E-04 \\
		
		&            & Rate           &    1.9768  &    1.9958  &    1.9991  \\
		\hline
	\end{tabular}
	\end{table}
	
	\begin{table}[htbp]
		\caption{$L_2$ error at t=0.01}
		\label{tab:1dexam3time}
			\begin{tabular}{c|c|ccccc}
			\hline
			($\alpha_1$,$\alpha_2$)&$0.01/\tau$             &        100 &        200 &        400 &        800 &       1600 \\
			\hline
			&  $\|E_{1,\tau}\|_{L^2(\Omega)}$           &  8.156E-06 &  4.066E-06 &  2.030E-06 &  1.014E-06 &  5.069E-07 \\
			
			(0.1,0.2) &            &Rate            &    1.0043  &    1.0021  &    1.0011  &    1.0006  \\
			
			&  $\|E_{2,\tau}\|_{L^2(\Omega)}$           &  2.709E-05 &  1.349E-05 &  6.734E-06 &  3.364E-06 &  1.681E-06 \\
			
			&            &Rate            &    1.0055  &    1.0028  &    1.0014  &    1.0007  \\
			\hline
			&  $\|E_{1,\tau}\|_{L^2(\Omega)}$           &  7.198E-05 &  3.581E-05 &  1.786E-05 &  8.918E-06 &  4.456E-06 \\
			
			(0.4,0.6) &            &Rate            &    1.0073  &    1.0037  &    1.0018  &    1.0009  \\
			
			&  $\|E_{2,\tau}\|_{L^2(\Omega)}$           &  2.309E-04 &  1.150E-04 &  5.737E-05 &  2.866E-05 &  1.432E-05 \\
			
			&            &Rate            &    1.0060  &    1.0030  &    1.0015  &    1.0007  \\
			\hline
			&  $\|E_{1,\tau}\|_{L^2(\Omega)}$           &  5.587E-04 &  2.788E-04 &  1.392E-04 &  6.958E-05 &  3.478E-05 \\
			
			(0.8,0.9) &            &Rate            &    1.0030  &    1.0015  &    1.0008  &    1.0004  \\
			
			&  $\|E_{2,\tau}\|_{L^2(\Omega)}$           &  7.600E-04 &  3.804E-04 &  1.903E-04 &  9.516E-05 &  4.759E-05 \\
			
			&            &Rate            &    0.9986  &    0.9993  &    0.9997  &    0.9998  \\
			\hline
		\end{tabular}
	\end{table}
	
\end{example}
\subsection{Two-dimensional cases}
\begin{example}[Smooth initial data]
	Consider the two-dimensional homogeneous problem \eqref{equmatrixequ} with smooth initial value. Let
	\begin{equation*}
	G_{1,0}(x,y)=x(1-x)y(1-y),\quad G_{2,0}(x,y)=x^2(1-x)y(1-y)^2,
	\end{equation*}
	$ f_1(x,y,t)=f_2(x,y,t)=0 $, and $a=-2$. To get the spatial convergence rates, we take $\tau=0.1/1600$, so that the error incurred by temporal discretization is negligible, and the results are shown in Table \ref{tab:2dexam2spa}, which verify Theorem \ref{thmsmoothdatasemi}. Moreover, we let $h=1/256$ to obtain the temporal convergence rates, and the results are shown in Table \ref{tab:2dexam2time}, which confirm Theorem \ref{thmhomfullest}.
	\begin{table}[htbp]
		\caption{$L_2$ error at t=0.1}
		\label{tab:2dexam2spa}
		\begin{tabular}{c|c|cccc}
			\hline
			$(\alpha_1,\alpha_2)$&1/h            &          8 &         16 &         32 &         64 \\
			\hline
			&  $\|E_{1,h}\|_{L^2(\Omega)}$          &  4.194E-04 &  8.400E-05 &  1.856E-05 &  4.338E-06 \\
			
			(0.1,0.2) &            &          Rate &    2.3199  &    2.1779  &    2.0973  \\
			
			&  $\|E_{2,h}\|_{L^2(\Omega)}$          &  8.338E-05 &  1.548E-05 &  3.427E-06 &  8.032E-07 \\
			
			&            &         Rate   &    2.4290  &    2.1757  &    2.0931  \\
			\hline
			&  $\|E_{1,h}\|_{L^2(\Omega)}$          &  6.085E-04 &  1.220E-04 &  2.696E-05 &  6.300E-06 \\
			
			(0.4,0.6) &            &          Rate &    2.3186  &    2.1779  &    2.0973  \\
			
			& $\|E_{2,h}\|_{L^2(\Omega)}$           &  1.201E-04 &  2.251E-05 &  4.985E-06 &  1.168E-06 \\
			
			&            &          Rate  &    2.4162  &    2.1746  &    2.0930  \\
			\hline
			&  $\|E_{1,h}\|_{L^2(\Omega)}$          &  9.289E-04 &  1.983E-04 &  4.412E-05 &  1.032E-05 \\
			
			(0.8,0.9) &            &          Rate &    2.2276  &    2.1686  &    2.0956  \\
			
			&  $\|E_{2,h}\|_{L^2(\Omega)}$          &  9.814E-05 &  1.923E-05 &  4.290E-06 &  1.007E-06 \\
			
			&            &           Rate &    2.3513  &    2.1645  &    2.0912  \\
			\hline
		\end{tabular}	
	\end{table}
	\begin{table}[htbp]
	\caption{$L_2$ error at t=0.1}
	\label{tab:2dexam2time}
	\begin{tabular}{c|c|ccccc}
		\hline
		$(\alpha_1,\alpha_2)$&$0.1/\tau$            &         20 &         40 &         80 &        160 &        320 \\
		\hline
		&   $\|E_{1,\tau}\|_{L^2(\Omega)}$          &  5.400E-06 &  2.656E-06 &  1.317E-06 &  6.560E-07 &  3.273E-07 \\
		
		(0.1,0.2) &            &          Rate &    1.0236  &    1.0118  &    1.0059  &    1.0030  \\
		
		&   $\|E_{2,\tau}\|_{L^2(\Omega)}$          &  2.177E-06 &  1.066E-06 &  5.276E-07 &  2.624E-07 &  1.309E-07 \\
		
		&            & Rate           &    1.0301  &    1.0150  &    1.0075  &    1.0037  \\
		\hline
		&   $\|E_{1,\tau}\|_{L^2(\Omega)}$          &  4.007E-05 &  1.946E-05 &  9.590E-06 &  4.760E-06 &  2.372E-06 \\
		
		(0.4,0.6) &            &          Rate &    1.0420  &    1.0209  &    1.0104  &    1.0052  \\
		
		&  $\|E_{2,\tau}\|_{L^2(\Omega)}$           &  1.508E-05 &  7.211E-06 &  3.527E-06 &  1.744E-06 &  8.671E-07 \\
		
		&            &   Rate         &    1.0639  &    1.0320  &    1.0160  &    1.0080  \\
		\hline
		&  $\|E_{1,\tau}\|_{L^2(\Omega)}$           &  2.197E-04 &  1.069E-04 &  5.260E-05 &  2.608E-05 &  1.298E-05 \\
		
		(0.8,0.9) &            &          Rate &    1.0396  &    1.0226  &    1.0121  &    1.0062  \\
		
		& $\|E_{2,\tau}\|_{L^2(\Omega)}$            &  6.261E-05 &  2.999E-05 &  1.463E-05 &  7.221E-06 &  3.586E-06 \\
		
		&            &  Rate          &    1.0618  &    1.0354  &    1.0189  &    1.0098  \\
		\hline
	\end{tabular}
	
\end{table}
\end{example}
\begin{example}[Nonsmooth initial value]
	Consider the two-dimensional homogeneous problem \eqref{equmatrixequ} with nonsmooth initial value. Let
	\begin{equation*}
	G_{1,0}(x,y)=\chi_{(1/2,1)\times(0,3/4)}(x,y),\quad G_{2,0}(x,y)=\chi_{(0,3/4)\times(1/2,1)}(x,y),
	\end{equation*}
	$ f_1(x,y,t)=f_2(x,y,t)=0 $, and $a=1$. To get the spatial convergence rates, we take $\tau=0.1/1600$, so that the error incurred by temporal discretization is negligible, and the results are shown in Table \ref{tab:2dexam3spa}, which verify Theorem \ref{thmnonsmoothdatasemi}. At the same time, we let $h=1/256$ to obtain the temporal convergence rates and Table \ref{tab:2dexam3time} shows the results, which agree with Theorem \ref{thmhomfullest}.
		\begin{table}[htbp]
		\caption{$L_2$ error at t=0.1}
		\label{tab:2dexam3spa}
		\begin{tabular}{c|c|cccc}
			\hline
			$(\alpha_1,\alpha_2)$&    $1/h$        &          8 &         16 &         32 &         64 \\
			\hline
			&    $\|E_{1,h}\|_{L^2(\Omega)}$        &  5.973E-03 &  1.282E-03 &  2.847E-04 &  6.711E-05 \\
			
			(0.1,0.2) &            &          Rate &    2.2195  &    2.1714  &    2.0849  \\
			
			&  $\|E_{2,h}\|_{L^2(\Omega)}$          &  6.924E-03 &  1.477E-03 &  3.282E-04 &  7.737E-05 \\
			
			&            &          Rate  &    2.2287  &    2.1701  &    2.0849  \\
			\hline
			& $\|E_{1,h}\|_{L^2(\Omega)}$           &  8.642E-03 &  1.855E-03 &  4.114E-04 &  9.683E-05 \\
			
			(0.4,0.6) &            &          Rate &    2.2199  &    2.1728  &    2.0871  \\
			
			&  $\|E_{2,h}\|_{L^2(\Omega)}$          &  1.004E-02 &  2.141E-03 &  4.718E-04 &  1.095E-04 \\
			
			&            &          Rate  &    2.2288  &    2.1823  &    2.1069  \\
			\hline
			&   $\|E_{1,h}\|_{L^2(\Omega)}$         &  1.086E-02 &  2.342E-03 &  5.024E-04 &  1.102E-04 \\
			
			(0.8,0.9) &            &          Rate &    2.2129  &    2.2210  &    2.1884  \\
			
			&  $\|E_{2,h}\|_{L^2(\Omega)}$          &  1.164E-02 &  2.535E-03 &  5.328E-04 &  1.101E-04 \\
			
			&            &           Rate &    2.1988  &    2.2504  &    2.2755  \\
			\hline
		\end{tabular}
		
	\end{table}
\begin{table}[htbp]
	\caption{$L_2$ error at t=0.1}
	\label{tab:2dexam3time}
	\begin{tabular}{c|c|ccccc}
		\hline
		($\alpha_1$,$\alpha_2$)&$0.1/\tau$            &         20 &         40 &         80 &        160 &        320 \\
		\hline
		&   $\|E_{1,\tau}\|_{L^2(\Omega)}$          &  5.672E-05 &  2.790E-05 &  1.384E-05 &  6.889E-06 &  3.438E-06 \\
		
		(0.1,0.2) &            &         Rate   &    1.0236  &    1.0119  &    1.0059  &    1.0030  \\
		
		&  $\|E_{2,\tau}\|_{L^2(\Omega)}$           &  1.426E-04 &  6.982E-05 &  3.455E-05 &  1.719E-05 &  8.571E-06 \\
		
		&            &   Rate         &    1.0299  &    1.0149  &    1.0075  &    1.0037  \\
		\hline
		&  $\|E_{1,\tau}\|_{L^2(\Omega)}$           &  4.243E-04 &  2.060E-04 &  1.015E-04 &  5.038E-05 &  2.510E-05 \\
		
		(0.4,0.6) &            &        Rate    &    1.0426  &    1.0212  &    1.0106  &    1.0053  \\
		
		&$\|E_{2,\tau}\|_{L^2(\Omega)}$             &  9.640E-04 &  4.619E-04 &  2.261E-04 &  1.118E-04 &  5.563E-05 \\
		
		&            &   Rate         &    1.0614  &    1.0308  &    1.0154  &    1.0077  \\
		\hline
		& $\|E_{1,\tau}\|_{L^2(\Omega)}$            &  2.229E-03 &  1.072E-03 &  5.248E-04 &  2.596E-04 &  1.291E-04 \\
		
		(0.8,0.9) &            &      Rate      &    1.0562  &    1.0302  &    1.0156  &    1.0080  \\
		
		&  $\|E_{2,\tau}\|_{L^2(\Omega)}$           &  3.497E-03 &  1.718E-03 &  8.507E-04 &  4.231E-04 &  2.110E-04 \\
		
		&            & Rate           &    1.0252  &    1.0141  &    1.0075  &    1.0039  \\
		\hline
	\end{tabular}
\end{table}
\end{example}

\begin{example}
		Consider the inhomogeneous problem \eqref{equmatrixequ} with vanishing initial data. For checking the spatial convergence rates, to eliminate the influence from temporal discretization, we take $\tau=0.1/1600$ and set
	\begin{equation*}
	\begin{aligned}
			&f_1(x,y,t)=t^{0.2}xy,\quad G_{1,0}=0, \\
			&f_2(x,y,t)=t^{0.3},\quad G_{2,0}=0,
	\end{aligned}		
	\end{equation*}
	and $a=0.5$. Table \ref{tab:2dexample4G0spa}  provides the spatial convergence rates, which validate Theorem \ref{thmvanisheddatasemi}.
	\begin{table}[htbp]
		\caption{$L_2$ error at $t=0.1$}
		\label{tab:2dexample4G0spa}
		\begin{tabular}{c|c|cccc}
			\hline
			$(\alpha_1,\alpha_2)$&$1/h$            &          8 &         16 &         32 &         64 \\
			\hline
			&  $\|E_{1,h}\|_{L^2(\Omega)}$           &  5.452E-04 &  1.155E-04 &  2.591E-05 &  6.088E-06 \\
			
			(0.1,0.2) &            &          Rate &    2.2385  &    2.1566  &    2.0898  \\
			
			&  $\|E_{2,h}\|_{L^2(\Omega)}$          &  2.140E-03 &  4.341E-04 &  9.657E-05 &  2.266E-05 \\
			
			&            & Rate           &    2.3015  &    2.1682  &    2.0916  \\
			\hline
			& $\|E_{1,h}\|_{L^2(\Omega)}$           &  1.122E-03 &  2.383E-04 &  5.356E-05 &  1.259E-05 \\
			
			(0.4,0.6) &            &          Rate &    2.2355  &    2.1535  &    2.0886  \\
			
			& $\|E_{2,h}\|_{L^2(\Omega)}$           &  5.305E-03 &  1.072E-03 &  2.390E-04 &  5.612E-05 \\
			
			&            &   Rate         &    2.3064  &    2.1659  &    2.0903  \\
			\hline
			&  $\|E_{1,h}\|_{L^2(\Omega)}$          &  2.643E-03 &  5.625E-04 &  1.268E-04 &  2.984E-05 \\
			
			(0.8,0.9) &            &          Rate &    2.2321  &    2.1492  &    2.0871  \\
			
			& $\|E_{2,h}\|_{L^2(\Omega)}$           &  9.720E-03 &  1.948E-03 &  4.343E-04 &  1.021E-04 \\
			
			&            &    Rate        &    2.3190  &    2.1650  &    2.0895  \\
			\hline
		\end{tabular}
	\end{table}
For checking the temporal convergence rates, to eliminate the influence from
spatial discretization, we take $h=1/256$ and set
\begin{equation*}
\begin{aligned}
&f_1(x,y,t)=10t^{0.2}\chi_{(0,1/2)\times(1/4,1)}(x,y),\quad G_{1,0}=0, \\
&f_2(x,y,t)=10t^{0.3}\chi_{(1/2,1)\times(0,1/4)}(x,y),\quad G_{2,0}=0,
\end{aligned}		
\end{equation*}
$a=0.5$. The temporal convergence rates are shown in Table \ref{tab:2dexample4G0time}, which verify Theorem \ref{thminhomfullest}.
\begin{table}[htbp]
	\caption{$L_2$ error at $t=0.1$}
	\label{tab:2dexample4G0time}
	\begin{tabular}{c|c|ccccc}
		\hline
		$(\alpha_1,\alpha_2)$&$0.1/\tau$            &         80 &        160 &        320 &        640 &       1280 \\
		\hline
		&   $\|E_{1,\tau}\|_{L^2(\Omega)}$          &  3.355E-05 &  1.876E-05 &  1.025E-05 &  5.507E-06 &  2.920E-06 \\
		
		(0.1,0.2) &            &  Rate &    0.8387  &    0.8719  &    0.8966  &    0.9155  \\
		
		&  $\|E_{2,\tau}\|_{L^2(\Omega)}$           &  5.092E-06 &  2.950E-06 &  1.642E-06 &  8.897E-07 &  4.731E-07 \\
		
		&            & Rate &    0.7878  &    0.8452  &    0.8839  &    0.9111  \\
		\hline
		&  $\|E_{1,\tau}\|_{L^2(\Omega)}$           &  3.147E-05 &  1.290E-05 &  5.202E-06 &  2.053E-06 &  7.877E-07 \\
		
		(0.4,0.6) &            & Rate &    1.2863  &    1.3106  &    1.3410  &    1.3821  \\
		
		&  $\|E_{2,\tau}\|_{L^2(\Omega)}$           &  2.300E-05 &  1.092E-05 &  5.217E-06 &  2.510E-06 &  1.214E-06 \\
		
		&            & Rate &    1.0753  &    1.0652  &    1.0559  &    1.0475  \\
		\hline
		&  $\|E_{1,\tau}\|_{L^2(\Omega)}$           &  2.791E-04 &  1.358E-04 &  6.627E-05 &  3.244E-05 &  1.592E-05 \\
		
		(0.8,0.9) &            & Rate &    1.0395  &    1.0349  &    1.0308  &    1.0272  \\
		
		&  $\|E_{2,\tau}\|_{L^2(\Omega)}$           &  8.463E-05 &  4.176E-05 &  2.064E-05 &  1.022E-05 &  5.066E-06 \\
		
		&            & Rate &    1.0192  &    1.0167  &    1.0143  &    1.0121  \\
		\hline
	\end{tabular}
\end{table}
\end{example}
\section*{Conclusion}

Anomalous diffusions are ubiquitous in natural world. The models are built for describing the different types of anomalous diffusions. The more recent FFPEs with multiple internal states effectively characterize the anomalous diffusion with different waiting time distributions for different internal states, governing the distribution of positions of the particles.
In this paper, we develop the Sobolev regularity of the FFPEs, including the homogeneous problem with smooth and nonsmooth initial values and the inhomogeneous problem with vanishing initial value, and then we design a numerical scheme for the FFPEs based on the finite element approximation for the space derivatives and convolution quadrature for the time fractional derivatives.
%Here, We Use The Backward Euler To Get The First Order Scheme For The Time Fractional Derivative And It Can Be Generalized To A Second Order One By The Second Order Backward Difference \Cite{Jin2016}.
We provide the optimal error estimates for the schemes in different cases, including the space semidiscrete and fully discrete schemes.
%Next, optimal error estimates with respect to the above three conditions are provided for both space semidiscrete and fully discrete schemes.
Finally, the numerical experiments for one- and two-dimensional examples are performed to confirm the theoretical analyses and the predicted convergence orders.

%\section*{Acknowledgements}
%This work was supported by the National Natural Science Foundation of China under grant no. 11671182, and the Fundamental Research Funds for the Central Universities under grants no. lzujbky-2018-ot03 and no. lzujbky-2017-ot10.
%    Text of article.

%    Bibliographies can be prepared with BibTeX using amsplain,
%    amsalpha, or (for "historical" overviews) natbib style.
%\bibliographystyle{amsplain}
%%    Insert the bibliography data here.
%\bibliography{ref}

\end{document}